\renewcommand{\k}{\mathbf k}
\newcommand{\h}{\mathfrak h}
\newcommand{\U}{\mathcal U}
\newcommand{\x}{\mathbf x}
\newcommand{\y}{\mathbf y}
\newcommand{\z}{\mathbf z}
\newcommand{\g}{\mathfrak g}
\newcommand{\X}{\mathbf X}
\newcommand{\Y}{\mathbf Y}
\newcommand{\GL}{\mathrm{GL}}
\newcommand{\Chi}{\mathcal X}
\DeclareMathOperator{\Diff}{Diff}
\DeclareMathOperator{\dl}{dl}
\DeclareMathOperator{\nilp}{nilp}
\DeclareMathOperator{\Char}{char}
\DeclareMathOperator{\Inv}{Inv}
\DeclareMathOperator{\ad}{ad}
\DeclareMathOperator{\Tri}{Tri}
\DeclareMathOperator{\vdl}{vdl}
\DeclareMathOperator{\Ad}{Ad}
\DeclareMathOperator{\Bir}{Bir}
\DeclareMathOperator{\Specmax}{Specmax}
\DeclareMathOperator{\Spec}{Spec}
\DeclareMathOperator{\Vect}{Vect}
\newcommand{\ent}[1]{\left\lfloor #1 \right\rfloor}
\begin{document}

\title{Actions of nilpotent groups on complex algebraic varieties}

\author{Marc Abboud}

\maketitle

\abstract{We study nilpotent groups acting faithfully on complex algebraic varieties. We use a method of base
change. For finite $p$-groups, we go from $\k$, a number field, to a finite field in order to use counting lemmas. We show that a finite
$p$-group of polynomial automorphisms of $\k^d$ is isomorphic to a
subgroup of $\GL_d(\k)$. For infinite groups, we go from $\C$ to $\Z_p$ and use $p$-adic analytic tools and the theory
of $p$-adic Lie groups. We show that a finitely generated nilpotent group $H$ acting faithfully on a complex quasiprojective
variety $X$ of dimension $d$ can be embedded into a $p$-adic Lie group acting faithfully and analytically on $\Z_p^d$; we deduce
that $d$ is larger than the virtual derived length of $H$.
}
\setcounter{tocdepth}{1}
\tableofcontents

\section{Introduction}

\subsection{Minkowski's bound for polynomial automorphisms.}\label{par:Minkowski_Schur}

\paragraph{Rational numbers.--} Let $p$ be a prime. A finite \emph{$p$-group} is a group of size $p^\alpha$ for some integer $\alpha \geq 0$.
For $d\in \Z_+$, define $M_\Q(d,p)$ to be the integer
\[ M_\Q(d,p) = \ent{\frac{d}{p-1}} + \ent{\frac{d}{p(p-1)}} + \ent{\frac{d}{p^2 (p-1)}} + \cdots
\]
(Here $M$ stands for Minkowski).
Let $v_p$ be the $p$-adic valuation; then $M_\Q(d,p) = \ent{\frac{d}{p-1}} + v_p \left( \ent{\frac{d}{p-1}} !
\right)$.

\begin{thm}[Minkowski 1887, see \cite{SerreBoundsOrder}]\label{MinkowskiBoundLinear} Let $d$ be a natural
number and let $p$ be a prime. If $G$ is a finite $p$-subgroup of ${\mathrm{\GL}}_d (\Q)$, then $v_p(\vert G\vert)\leq M_\Q(d,p)$,  and this upper bound is
optimal: there are groups of order $p^{M_\Q(d,p)}$ in ${\mathrm{\GL}}_d(\Q)$.  \end{thm}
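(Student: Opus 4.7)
The strategy is Minkowski's classical one: embed $G$ into $\GL_d(\Z)$, reduce modulo a cleverly chosen auxiliary prime $\ell$, and count. Since $G$ is finite, the $\Z[G]$-submodule $L = \sum_{g\in G} g\cdot \Z^d \subset \Q^d$ is a full-rank $G$-stable lattice; a $\Z$-basis of $L$ conjugates $G$ into $\GL_d(\Z)$. For any odd prime $\ell$, the kernel of the reduction map $\GL_d(\Z) \to \GL_d(\mathbb{F}_\ell)$ is torsion-free: if $A = I + \ell^k B$ with $\ell \nmid B$ satisfies $A^q = I$ for some prime $q$, then $(I+\ell^k B)^q \equiv I + q\ell^k B \pmod{\ell^{k+1}}$, forcing $q \equiv 0 \pmod \ell$, which is absurd when $q\neq \ell$. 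For $\ell \neq p$ this shows the restriction of reduction to $G$ is injective, so the task is reduced to bounding $v_p(|\GL_d(\mathbb{F}_\ell)|)$ for a cleverly chosen $\ell$.

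Using $|\GL_d(\mathbb{F}_\ell)| = \ell^{d(d-1)/2}\prod_{i=1}^d(\ell^i-1)$ and discarding the $\ell$-power prefactor, I would pick $\ell$ to be a \emph{primitive root modulo $p^2$}, which exists by Dirichlet's theorem on primes in arithmetic progressions (the set of such residue classes is non-empty and coprime to $p^2$). Then the multiplicative order of $\ell$ modulo $p^m$ is $p^{m-1}(p-1)$, so $p \mid \ell^i - 1$ iff $(p-1) \mid i$, and for $i = (p-1)j$ the lifting-the-exponent lemma yields $v_p(\ell^i-1) = 1 + v_p(j)$. Summing and applying Legendre,
\[ v_p\bigl(|\GL_d(\mathbb{F}_\ell)|\bigr) \;=\; \ent{\tfrac{d}{p-1}} + v_p\!\left(\ent{\tfrac{d}{p-1}}!\right) \;=\; M_\Q(d,p), \]
which gives the upper bound $v_p(|G|)\leq M_\Q(d,p)$. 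The prime $p=2$ requires the $2$-adic variant of LTE, e.g.\ by taking $\ell \equiv 3 \pmod 8$; the same computation then goes through.

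For optimality, set $k = \ent{d/(p-1)}$. Multiplication by $\mu_p \subset \Z[\zeta_p]^\times$ on $\Z[\zeta_p] \simeq \Z^{p-1}$ realizes $\Z/p \hookrightarrow \GL_{p-1}(\Z)$; stacking $k$ block copies and permuting them by a Sylow $p$-subgroup of $S_k$ produces the wreath product $(\Z/p)\wr \mathrm{Syl}_p(S_k) \subset \GL_{k(p-1)}(\Z)$, of order $p^{k+v_p(k!)} = p^{M_\Q(d,p)}$; padding with the identity on the remaining $d - k(p-1)$ coordinates yields the desired subgroup of $\GL_d(\Q)$. The only genuine subtlety is the choice of $\ell$: any $\ell\neq p$ gives some upper bound via reduction, but only a primitive root modulo $p^2$ makes that bound exactly $M_\Q(d,p)$ and meets the explicit construction; everything else is Minkowski's torsion-free-kernel lemma plus a bare-hands $p$-adic valuation count.
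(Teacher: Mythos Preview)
Your argument for odd $p$ is correct and is essentially the paper's proof: both reduce modulo a prime $\ell$ that generates $(\Z/p^2\Z)^\times$, invoke Dirichlet to find such $\ell$, and compute $v_p(|\GL_d(\F_\ell)|)=M_\Q(d,p)$ by the same lifting-the-exponent calculation (the paper's Lemma~\ref{CalculValuationsP-Adiques}). The only cosmetic difference is how you obtain injectivity of reduction: you conjugate $G$ into $\GL_d(\Z)$ via a $G$-stable lattice and use Minkowski's torsion-free-kernel lemma, whereas the paper works in $\GL_d(\Z[1/N])$ and takes $\ell$ large enough that no coefficient of $g-\id$ vanishes. Your optimality construction via $(\Z/p)\wr\mathrm{Syl}_p(S_k)$ is also the paper's construction (Proposition~\ref{PropBorneOptimale}) specialized to $\k=\Q$.

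There is, however, a genuine gap at $p=2$. Your claim that ``the same computation goes through'' with $\ell\equiv 3\pmod 8$ is false: for odd $\ell$ the $2$-adic LTE gives $v_2(\ell^i-1)=v_2(\ell-1)$ for $i$ odd and $v_2(\ell-1)+v_2(\ell+1)+v_2(i)-1$ for $i$ even, and minimizing $\sum_{i=1}^d v_2(\ell^i-1)$ over all odd $\ell$ yields
\[
d+\ent{\tfrac{d}{2}}+v_2(d!) \;=\; M_\Q(d,2)+\ent{\tfrac{d}{2}},
\]
not $M_\Q(d,2)$. (With $\ell\equiv 3\pmod 8$ one has $v_2(\ell-1)=1$, $v_2(\ell+1)=2$, and the sum is exactly $d+\lfloor d/2\rfloor+v_2(d!)$.) The paper flags this in Remark~\ref{remarkCasPairLinear}: to recover the sharp bound at $p=2$ one must first observe that $G$ preserves the positive-definite form $\sum_{g\in G}{}^t g\, g$, hence embeds in an orthogonal group $O(\psi,\F_\ell)$ after reduction, and then use the (smaller) $2$-adic valuation of $|O(\psi,\F_\ell)|$ rather than $|\GL_d(\F_\ell)|$. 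Your reduction-only approach cannot reach the optimal bound for $p=2$.
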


\paragraph{Number fields.--} Schur extended Minkowski's result to the case of number fields. To state Schur's result,
let us introduce some notation for cyclotomic extensions. Consider a number field $\k$ and fix an
algebraic closure ${\overline{\k}}$ of $\k$. Denote by $z_a\in {\overline{\k}}$ any primitive $a$-th root of unity, for
$a$ any positive integer; for instance $z_4={\mathsf{i}}$,
a square root of $-1$.
\begin{itemize}
\item If $p \geq 3$, set $t(\k;p) = [\k(z_p) : \k]$ and let $m(\k;p)$
be the maximal integer $a$ such that $\k(z_p)$ contains $z_{p^a}$; note that $m(\k; p)$ is finite because $\k$ is a finite extension of $\Q$. Then, define
\[
M_\k(d,p) := m(\k; p) \cdot \ent{\frac{d}{t(\k;p)}} + \ent{\frac{d}{p \cdot t(\k;p)}} +
\ent{\frac{d}{p^2t(\k;p)}} + \cdots .
\]
\item  If $p=2$, set $t(\k; 2) =[\k(z_4): \k]$  and let $m(\k; 2)$ be the largest integer $a$ such that $z_{2^a} \in \k(z_4)$. Define \[
M_\k (d,2) = d + (m(\k; 2)-1) \ent{\frac{d}{t(\k;2)}} +  \ent{\frac{d}{2t(\k;2)}} +
\ent{\frac{d}{4t(\k;2)}} + \cdots .
\]
\end{itemize}
This definition is consistent with the definition of $M_\Q (d,p)$ given above.

\begin{thm}[\cite{schur1973klasse}, \cite{SerreBoundsOrder}]\label{SchurBound} Let $d$ be a natural
number, and let $p$ be a prime.
 If $G$ is a finite $p$-subgroup of ${\mathrm{\GL}}_d (\k)$  then $v_p(\vert G\vert)\leq M_\k(d,p)$ and
 this bound is optimal.
 \end{thm}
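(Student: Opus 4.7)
The plan is to reduce from the number field $\k$ to a finite field by a well-chosen reduction modulo a prime ideal, then use Chebotarev's density theorem to pin down the exact $p$-part of the ambient $\GL_d(\mathbb{F}_q)$. Since $G$ is finite, there exists a finite set $S$ of places of $\k$ (containing those above $p$) such that $G \subset \GL_d(\mathcal{O}_{\k,S})$. For any prime $\mathfrak{q}$ of $\mathcal{O}_\k$ outside $S$, with residue field $\mathbb{F}_\mathfrak{q}$ of characteristic $\ell \neq p$, reduction gives a homomorphism $\rho_\mathfrak{q} \colon G \to \GL_d(\mathbb{F}_\mathfrak{q})$ whose kernel lies in the first congruence subgroup of $\GL_d$ of the $\mathfrak{q}$-adic completion; the latter is pro-$\ell$, so $\rho_\mathfrak{q}$ is injective on $G$ and $v_p(|G|) \leq v_p(|\GL_d(\mathbb{F}_q)|)$ with $q = \#\mathbb{F}_\mathfrak{q}$.

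The central step is to choose $\mathfrak{q}$ so that this upper bound equals $M_\k(d,p)$. For $p$ odd, consider the Galois group $\mathrm{Gal}(\k(z_{p^N})/\k)$ for $N$ large; it embeds in $(\Z/p^N\Z)^\times$, and by definition of $t(\k;p)$ and $m(\k;p)$ its image has index $t(\k;p)$ in $(\Z/p\Z)^\times$ and the largest $p$-power root of unity that it realizes lies at level $p^{m(\k;p)}$. Chebotarev's density theorem produces infinitely many unramified primes $\mathfrak{q}$ outside $S$ whose Frobenius is a prescribed element of this Galois group; picking a generator of its cyclic factor, the residue cardinality $q$ satisfies: the order of $q$ in $(\Z/p\Z)^\times$ equals $t(\k;p)$, and $v_p(q^{t(\k;p)} - 1) = m(\k;p)$.

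A direct computation then completes the upper bound: since $|\GL_d(\mathbb{F}_q)| = q^{d(d-1)/2}\prod_{i=1}^d(q^i-1)$ and $p \nmid q$, one has $v_p(|\GL_d(\mathbb{F}_q)|) = \sum_{i=1}^d v_p(q^i - 1)$. By the Lifting-the-Exponent lemma, $v_p(q^i - 1) = 0$ unless $t(\k;p) \mid i$, and for $i = j \cdot t(\k;p)$ one has $v_p(q^i - 1) = m(\k;p) + v_p(j)$. Summing over $j = 1, \dots, \ent{d/t(\k;p)}$ and applying Legendre's formula $v_p(n!) = \sum_{k \geq 1}\ent{n/p^k}$ recovers exactly $M_\k(d,p)$.

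For optimality, I would exhibit an explicit $p$-subgroup of $\GL_d(\k)$ of order $p^{M_\k(d,p)}$ as a wreath product: partition $\k^d$ into $\ent{d/t(\k;p)}$ blocks of dimension $t(\k;p)$, on each block use the $\k$-linear action by multiplication by $z_{p^{m(\k;p)}}$ on $\k(z_{p^{m(\k;p)}})$ regarded as a $\k$-vector space, and let a $p$-Sylow of $\mathfrak{S}_{\ent{d/t(\k;p)}}$ permute the blocks. The main obstacle is the case $p=2$, where $-1 = z_2 \in \k$ is always present, so $z_4$ replaces $z_2$ in the roles of $t$ and $m$, the Lifting-the-Exponent lemma acquires an extra $2$-adic term, and the optimal construction must additionally incorporate independent sign flips on each basis vector, producing the initial term $d$ in the formula for $M_\k(d,2)$.
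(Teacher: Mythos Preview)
Your argument for $p$ odd is correct and is exactly the approach the paper follows (after Serre): reduce modulo a well-chosen prime of $\mathcal{O}_\k$, use the density of primes with prescribed Frobenius in the cyclotomic tower (the paper packages this via the $p^\infty$-cyclotomic character $\chi_{p^\infty}$ and its image in $\Z_p^\times$, Proposition~\ref{ImageCaractereCyclotomique} and Theorem~\ref{OuvertDenseAvecAnneauNormal}), and compute $\sum_i v_p(q^i-1)$. Your Lifting-the-Exponent computation is the content of Lemma~\ref{CalculValuationsP-Adiques} and Proposition~\ref{ConstanteMajoréeParBorneSchur}. The wreath-product construction for optimality is likewise the one in Proposition~\ref{PropBorneOptimale}.

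There is, however, a genuine gap in your treatment of $p=2$. The Chebotarev/LTE method does \emph{not} always reach the bound $M_\k(d,2)$: when $\k$ is in case~(c) of Proposition~\ref{ImageCaractereCyclotomique} (e.g.\ $\k=\Q$), the infimum of $\sum_{i=1}^d v_2(q^i-1)$ over all admissible residue cardinalities $q$ is $M_\k(d,2)+\lfloor d/2\rfloor$, not $M_\k(d,2)$ (this is precisely the computation in Proposition~\ref{ConstanteMajoréeParBorneSchur}(c)). Your extra $2$-adic term in LTE is the source of this overshoot, and no choice of Frobenius eliminates it. The actual proof of Schur's bound for $p=2$ in this case requires an additional idea: since $G\subset\GL_d(\k)$ preserves the positive-definite form $\sum_{g\in G}{}^t g\,g$, one may reduce $G$ into an orthogonal group $O_d(\F_q)$ rather than the full $\GL_d(\F_q)$, and the $2$-adic valuation of $|O_d(\F_q)|$ is small enough to recover $M_\k(d,2)$ exactly. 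This is discussed in Remarks~\ref{remarkCasPairLinear} and~\ref{remarkCasPairNeMarchPas}; it is also why the paper's extension to polynomial automorphisms loses optimality precisely in this case.
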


 It is not difficult to find a subgroup $G \subset \mathrm{\GL}_d (\k)$ such that $\abs G =
 p^{M(d,p)}$. We recall how to do so in Proposition \ref{PropBorneOptimale}.

\paragraph{Polynomial automorphisms.--}
Our first goal is to extend the theorem of Minkowski and Schur to an algebraic,
but nonlinear context. Let $\Aut(\A_\k^d)$ be the group of polynomial automorphisms of the affine space
$\A^d$, over some number field $\k$. This group contains $\GL_d (\k)$ but it
is much more complicated. Surprisingly, we are able to show that the Minkowski-Schur bound still
holds for subgroups of $\Aut (\A_\k^d)$ and in fact the same finite subgroups appear.

\begin{bigtheorem}\label{SchurBoundPolynomial}
Let $\k$ be a number field, let $d$ a natural number, and let $p \geq 3$ be a prime. If $G$ is a finite
$p$-subgroup of $\Aut(\A_\k^d)$, then there exists a group embedding $ G \hookrightarrow \GL_d(\k).$
In particular, Schur's bound still holds:
\[
v_p (\abs G) \leq M_\k(d,p),
\]
and this bound is optimal.

\end{bigtheorem}

 The proof first shows the bound on the cardinal of the group $G$ and we then find the group embedding $G \hookrightarrow
 \GL_d(\k)$ using a Sylow argument.

 \begin{rmq} \label{remark:CasPImpairPasOptimal} The case $p=2$ is
  also dealt with in Section 2. But we don't get an optimal bound. For example for $p=2$ and $\k = \Q$, we show that any
  $2$-subgroup $G$ of $\Aut(\A_\Q^d)$ can be embedded into $\GL_d(\Q(z_4))$ and therefore satisfies
  $v_2(\abs G) \leq M_\Q(d,2) + \ent{\frac{d}{2}}$. More
  precisely, Proposition \ref{ImageCaractereCyclotomique} defines three cases $(a), (b)$ and $(c)$ when
  $p=2$. We get an embedding into $\GL_d(\k)$
  in case $(a)$ and $(b)$ (this is the case for example if $\k$ contains $z_4$), but in case $(c)$ we can
  only get an embedding of $G$ into $\GL_d(\k(z_4))$ and therefore we get the bound $ v_2(\abs G) \leq
  M_\k(d,2) + \ent{\frac{d}{2}} = M_{\k(z_4)}(d,2) $. See Theorem \ref{BigThmSchurBoundPolynomial} page
  \pageref{BigThmSchurBoundPolynomial} for the general statement.

  In fact, Theorem \ref{SchurBoundPolynomial} still holds when $\k$ is a finitely generated
 field over $\Q$ but the proof is less intuitive so we will show the proof for $\k$ a number field and
 explain how to extend it to finitely generated field over $\Q$ in Remark \ref{RmqFinitelyGeneratedField}.
 We then state the complete theorem for finitely generated fields over $\Q$ in Theorem
 \ref{BigThMSchurBoundPolynomialGeneralCase} page \pageref{BigThMSchurBoundPolynomialGeneralCase}.

 \end{rmq}

Our method of proof follows \cite{SerreBoundsOrder}, in which Serre bounds the order of the finite subgroups
of ${\mathrm{H}}(\k)$, for ${\mathrm{H}}$ a semi-simple algebraic group; the phenomenon mentioned in Remark
\ref{remark:CasPImpairPasOptimal} also appears for such groups ${\mathrm{H}}$. The general idea is to embed $G$ into a group of
linear automorphisms over a finite field, study the finite field case, and use cyclotomic characters to find the
optimal bound yield by this method.

\paragraph{Birational transformations.--}
The problem of the existence of uniform bounds on the size of finite $p$-groups or finite simple groups in infinite
dimensional groups such as $\Aut(\A^d)$ or $\Bir(\A^d)$ has been studied extensively during the last decade
(see~\cite{Serre09}). For an arbitrary complex projective variety $X$, one cannot expect uniform
bounds that would only depend on the dimension of $X$, since
every finite group is the  group of automorphisms of a complex projective curve (see~\cite{Greenberg}).
But precise results have been obtained when $X$ is rationally connected. Recently, Jinsong Xu showed the following
optimal result:
{\sl{Let $d$ be a natural number and let $p$ be a prime $>d+1$.
If $X$ is a rationally connected variety of dimension $d$ over an
algebraically closed field of characteristic $0$, and  $G$ is a finite $p$-subgroup of $\Bir(X)$, then G is abelian
and its rank is at most $d$}} (see~\cite{Jinsong_Xu:CRAS_2020}). Results of this type were first shown by
Prokhorov, Shramov and Birkar in \cite{prokhorov_shramov_2014} for birational transformations of any varieties and
improvements were made for rationally connected varieties in  \cite{prokhorov2016jordan}.

These results are deeper than our Theorem \ref{SchurBoundPolynomial}, but our contribution has
a few advantages: it
may serve as an introduction to the work of Prokhorov and Shramov, the techniques are more elementary, the precise
bound we obtain illustrates the interplay between the arithmetic of the field $\k$ and the size of the group, and the proof
shows why the upper bound of Minkowski and Schur is still valid in $\Aut(\A_\k^d)$.

\begin{rmq}
The results of Prokhorov and Shramov rely on the  BAB
conjecture, which was proved by Birkar in \cite{birkar2016singularities}. The result of J.
Xu relies on the work of Haution on
equivariant cohomology and fixed points of finite groups (see~\cite{haution_2019}).
\end{rmq}

\subsection{A bound for the action of finitely generated nilpotent groups}

\subsubsection{Nilpotent and solvable groups}\label{par:nilpotent_and_solvable}

Let~$H$~be a group.  If~$a,b \in H$, we denote by~$[a,b] := ab \inv a \inv b$~their commutator.   If~$H_1,H_2$~are two
subgroups of~$H$, then we denote by~$H_1 H_2$~the subgroup generated by the set~$\{h_1 h_2: h_1 \in H_1, h_2 \in
H_2\}$~and by $[H_1,H_2]$ the subgroup generated by the set~$\{[h_1, h_2]: h_1 \in H_1, h_2 \in H_2\}$.
The lower central (resp. derived) series is defined by~$D^{0}(H) = H$~(resp.~$D_0(H) = H$)~and~$D^{i+1}(H)
= [H, D^{i}(H)]$~(resp.  $D_{i+1}(H) = [D_i(H), D_i (H)]$). A group $H$ is  \emph{nilpotent} (resp.
\emph{solvable}) when there exists an integer $k$ such that $D^k(H)  = {1}$ (resp. $D_k(H)  = {1}$).

If $H$ is nilpotent, its {\emph{nilpotency
class}} $\nilp(H)$~is the lowest integer such that $D^k(H)  = {1}$.
For a solvable group
$H$, denote by $\dl (H)$ its derived length, that is the least integer $k$ such that $D_k(H)  = {1}$. The \emph{virtual
derived length} is the minimum of $\dl(H_0)$ over finite index subgroups $H_0$ of $H$. Similar definitions
and notation will be used for Lie algebras.

\subsubsection{Upper bounds on the virtual derived length}

Finite $p$-groups are nilpotent. We now look at infinite, finitely generated nilpotent groups, and their actions by
automorphisms and birational transformations.
In \cite{cantat2014algebraic}, Cantat and Xie used $p$-adic analysis to give information on group actions on complex
algebraic varieties by birational transformations, and sketched the proof of the following result.

\begin{bigtheorem}\label{BoundNilpotentGroups}
  Let $H$ be a finitely generated nilpotent
  group acting faithfully on a quasi-projective variety $X$ by algebraic automorphisms over a field of characteristic
zero. Then, \[ \vdl(H)\leq \dim X  \] where $\vdl(H)$ is the \emph{virtual derived length} of $H$.
Furthermore, this bound is optimal.
\end{bigtheorem}

Another goal of this paper is to give a complete proof of this result. Again, the main idea is to replace the
initial field of definition by another one, here $\Q_p$, and in fact by $\Z_p$, for a suitable prime $p$.
Then, the initial action of the discrete group $H$ will be extended to an analytic action of a $p$-adic
Lie group over $\Z_p^{\dim X}$, so that
tools from $p$-adic analysis will be available, in particular $p$-adic analytic vector fields and $p$-adic Lie algebras. Thus,
Theorem \ref{BoundNilpotentGroups} will follow from a similar theorem we prove over $\Z_p$. Section
\ref{SecPAdicAnalysis} is dedicated to the construction of $p$-adic analytic tools needed for the proof of Theorem
\ref{BoundNilpotentGroups} such as infinite dimensional $p$-adic Lie groups or Tate-analytic diffeomorphisms and Section
\ref{SecFinitelyGeneratedNilpotentGroups} is dedicated to the proof of Theorem \ref{BoundNilpotentGroups}.

\section{Finite $p$-groups}

\subsection{Preliminaries}

\paragraph{Primes and $p$-adic numbers} In the rest of the article, $p$ is a prime unless mentioned otherwise, $\Z_p$
denote the ring of $p$-adic integers and $\Q_p$ is the fraction field of $\Z_p$. Recall that Dirichlet's
theorem states for any integers $a,n$ such that $gcd(a,n) =1$, there is an infinite amount of prime numbers
$\ell$ such that $\ell = a \mod n$.

\paragraph{Maximal ideals and reduction} If $q$ is a power of a prime, we denote by $\F_q$ the field with $q$ elements.
  Let $A$ be a finitely generated $\Z$-algebra. Then for every maximal ideal $\m \subset A$, $A / \m$ is a finite
  field. This comes from the Nullstellensatz for Jacobson rings which is proven in \cite{bourbaki2007algebre}, chapter
  5, $\S 3$, theorem 3 of section 4.

\subsection{Groups of linear transformations over $\Q$}

To warm up, let us prove the theorem of Minkowski. For a ring $A$, we denote by $A^{\times}$ its subgroup of invertible
elements; for any prime $p$ the group $(\Z / p^2 \Z)^{\times}$ is cyclic.

\begin{prop}\label{PropEmbeddingFinite}
  Let $G$ be a finite subgroup of $\GL_d (\Q)$. For any prime $\ell$ large enough there exists an injective homomorphism
  $G \hookrightarrow \GL_d (\F_\ell)$.
\end{prop}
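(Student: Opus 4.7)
The plan is to realize $G$ over a finitely generated subring $R \subset \Q$, then reduce modulo a prime $\ell$ that is coprime to a certain finite collection of ``bad'' integers coming from $G$, and show that the resulting homomorphism is injective as soon as $\ell$ is large enough.

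First I would record that since $G$ is finite, the set of all matrix entries appearing in elements of $G$ is a finite subset of $\Q$, so there is a positive integer $N$ with $G \subset \GL_d(\Z[1/N])$. For any prime $\ell$ not dividing $N$, the element $N$ is invertible in $\F_\ell$, so the canonical map $\Z \to \F_\ell$ extends uniquely to a ring homomorphism $\Z[1/N] \to \F_\ell$. Applied entrywise to matrices, it produces a group homomorphism $\rho_\ell : \GL_d(\Z[1/N]) \to \GL_d(\F_\ell)$, whose restriction to $G$ is our candidate embedding.

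Second I would verify injectivity. For every $g \in G \setminus \{1\}$, pick an entry $(g - I)_{ij} \neq 0$ and write it as $a_g / N^{k_g}$ with $a_g \in \Z \setminus \{0\}$. Since $G$ is finite, the set $\{|a_g| : g \neq 1\}$ is finite and bounded. If $\ell$ is a prime that does not divide $N$ and satisfies $\ell > \max_{g \neq 1} |a_g|$, then $a_g \not\equiv 0 \pmod \ell$ for each nontrivial $g$, so $\rho_\ell(g)$ has a nonzero off-identity entry mod $\ell$ and is therefore not the identity. Hence $\ker(\rho_\ell|_G)$ is trivial for every such $\ell$.

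There is essentially no hard step here: the only things to check are that the reduction map is well defined (which forces $\ell \nmid N$) and that the finite list of numerators $a_g$ do not all vanish mod $\ell$ simultaneously, both of which are immediate from the finiteness of $G$. The statement is a template that will be reused later in a less trivial setting (finitely generated fields in place of $\Q$, and control of denominators via a Jacobson-type Nullstellensatz), so it is worth isolating the three ingredients clearly: \emph{(i)} a finitely generated ring of definition, \emph{(ii)} a reduction map at a prime avoiding the inverted denominators, and \emph{(iii)} a finiteness argument that ensures the nonzero entries of $g - I$ remain nonzero after reduction.
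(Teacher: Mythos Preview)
Your proof is correct and follows essentially the same approach as the paper: realize $G$ inside $\GL_d(\Z[1/N])$, then reduce modulo any prime $\ell$ large enough to avoid both the denominators and the finitely many primes that could kill a nonzero entry of $g-\id$ for some $g\neq 1$. The only cosmetic difference is that the paper phrases the injectivity condition in terms of the largest prime factor appearing in the coefficients of $g-\id$, whereas you bound the numerator of one chosen nonzero entry; both versions yield the same conclusion.
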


\begin{proof}
  Since $G$ is finite, there exists an integer $N$ such that $G \subset \GL_d (\Z[1 / N])$. Now, for each $g \in G
  \setminus \left\{ \id \right\}$ denote by $l(g)$ the largest prime factor that appears in the prime decomposition of the rational
  numbers given by the coefficients of the matrix $g - \id$; denote by $L$ the maximum of the primes $l(g)$. If $\ell >
  \max(N,L)$, the homomorphism of reduction modulo $l$ is defined on $G$ and is injective.
\end{proof}

Thus, if $G \subset \GL_d ( \Q)$ is a finite subgroup, $v_p(\abs G) \leq v_p (\abs{\GL_d(\F_\ell)})$ for any $\ell$
given by Proposition \ref{PropEmbeddingFinite}. We know that

  \begin{equation}
\abs{ \GL_d (\F_\ell)} = \ell^{d(d-1)/2} \prod_{i=1}^{d-1} \left( \ell^i -1 \right).
    \label{EqCardinalGLd}
  \end{equation}

for any prime $\ell$. Let us compute the $p$-adic valuation of such a product.

\begin{lemme} \label{CalculValuationsP-Adiques}
  Suppose $p\neq 2$ and let $\ell$ be a generator of $(\Z / p^2 \Z)^\times$.
  \begin{enumerate}
    \item If $p$ divides $\ell^i -1$ then $p-1$ divides $i$;
    \item If $p-1$ divides $i$ then $v_p(\ell^i-1) = 1 + v_p(i)$.
  \end{enumerate}
\end{lemme}

\begin{proof}
  Suppose $p$ divides $\ell^i-1$. Note that $\ell^{ip}-1 = (\ell^i -1) \sum_{j=0}^{p-1} \ell^{ij}$; since
  $\ell^i \equiv 1
  \mod p$, we have $\sum_{j=0}^{p-1} \ell^{ij} \equiv 0 \mod p$, and then $\ell^{ip} \equiv 1 \mod p^2$.
  Since $\ell$ is of
  order $p(p-1)$ in $(\Z / p^2 \Z)^{\times}$, we have that $p(p-1)$ divides $ip$
  therefore, $p-1$ divides $i$, which proves the first assertion.

  We prove assertion 2 by induction on $v_p(i)$.  To initialize the induction assume $v_p(i) =0$. Then
  $p$ and therefore $p(p-1)$ do not divide $i$; thus $\ell^i \not \equiv 1 \mod p^2$ because $\ell$ is of order $p(p-1)$.
  Thus, $v_p(\ell^i -1) =1$. Now suppose the assertion true for $v_p(i) = k$ with $k \geq 0$ and suppose $v_p(i) = k+1$.
  Write $i = (p-1) p^{k+1} m$ with $m$ not divisible by $p$ and suppose the result true for $v_p(i) = k$.
  Let $s:= \ell^{(p-1)m}$, then
  \begin{align*}
    \ell^{i} -1 = s^{p^{k+1}} -1 = (s^{p^k}-1) \sum_{j=0}^{p-1} s^{j p^k}.
  \end{align*}
  By induction, $s^{p^k}$ is of the form $s^{p^k} = 1 + u p^{k+1}$ where $u$ is an integer not divisible by $p$. Therefore,
  for all $1 \leq j \leq p-1, s^{jp^k} = 1 + j p^{k+1} u + v_jp^2$ where $v_j$ is some integer.
  , therefore we can write
  \[ \sum_{j=0}^{p-1} s^{jp^k} = p + p^{k+1} \frac{p(p-1)}{2} u + p^2V = p \left(1 + p^{k+1} \frac{p-1}{2} u + pV \right) \]
  where $V = \sum v_j$. Since $p$ is odd, $\frac{p-1}{2}$ is an integer and this sum has $p$-adic valuation $1$ since
  $k+1 \geq 1$.
  \[
  s^{p^{k+1}} -1 = (s^{p^k} -1) \cdot p \left(1 + p^k\sum_{j=0}^{p-1} u_j \right) \]
  since $k \geq 1$, we get $v_p (s^{p^{k+1}} -1) = 1 + v_p (s^k -1 ) = 1 + (k+1)$.
\end{proof}

Equation \eqref{EqCardinalGLd} and Lemma \ref{CalculValuationsP-Adiques} provide the following corollary.

\begin{cor}
  Let $d$ be an integer, let $p$ be an odd prime, and let $\ell$ be a prime whose image in $(\Z / p^2
  \Z)^{\times}$ is a generator. Then
  \[ v_p (\GL_d(\F_\ell)) = M_\Q(d,p). \]
  This proves also the fact that Theorem \ref{MinkowskiBoundLinear} "is optimal for $\GL_d(\F_\ell)$" by Sylow.
\end{cor}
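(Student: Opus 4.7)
The plan is to apply Lemma \ref{CalculValuationsP-Adiques} termwise to the factorisation in \eqref{EqCardinalGLd}. Since $\ell$ is a generator of $(\Z / p^2 \Z)^\times$ we have $\gcd(\ell,p)=1$, so the factor $\ell^{d(d-1)/2}$ contributes nothing to $v_p$. It therefore suffices to compute the $p$-adic valuation of the product $\prod_i (\ell^i-1)$.

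By part~(1) of the lemma, the only factors $\ell^i-1$ that contribute to $v_p$ are those with $(p-1)\mid i$, and by part~(2) each such factor contributes exactly $1+v_p(i)$. Writing $i=(p-1)k$ and using that $\gcd(p,p-1)=1$ (since $p$ is odd), so $v_p((p-1)k)=v_p(k)$, the valuation becomes
\[
v_p\bigl(|\GL_d(\F_\ell)|\bigr) \;=\; \sum_{k=1}^{N}\bigl(1+v_p(k)\bigr) \;=\; N + v_p(N!),
\]
where $N=\ent{d/(p-1)}$ is the number of admissible indices appearing in the product.

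Next I would invoke Legendre's formula $v_p(N!)=\sum_{j\geq 1}\ent{N/p^j}$ together with the elementary identity $\ent{\ent{x/a}/b}=\ent{x/(ab)}$ for positive integers $a,b$. Applied with $x=d$, $a=p-1$, $b=p^j$, this turns each summand into $\ent{d/(p^j(p-1))}$, so
\[
N+v_p(N!) \;=\; \ent{\frac{d}{p-1}} + \sum_{j\geq 1}\ent{\frac{d}{p^j(p-1)}} \;=\; M_\Q(d,p),
\]
which is the desired equality. The optimality clause in Theorem~\ref{MinkowskiBoundLinear} then follows immediately from Sylow's theorem: a $p$-Sylow subgroup of $\GL_d(\F_\ell)$ has order $p^{v_p(|\GL_d(\F_\ell)|)}=p^{M_\Q(d,p)}$, and it already lies in $\GL_d(\Q)$ up to conjugacy via the embedding supplied by Proposition~\ref{PropEmbeddingFinite} run in reverse (one can also exhibit it explicitly, as referred to in Proposition~\ref{PropBorneOptimale}).

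There is no real obstacle here; this is bookkeeping on top of Lemma~\ref{CalculValuationsP-Adiques}. The only point requiring mild care is the nested floor identity and making sure the range of the product index matches the definition of $N$, so that the combinatorial count of admissible $i$'s is indeed $\ent{d/(p-1)}$ and not $\ent{(d-1)/(p-1)}$.
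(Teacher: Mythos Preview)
Your computation is correct and is exactly the argument the paper has in mind: it states the corollary as an immediate consequence of Equation~\eqref{EqCardinalGLd} and Lemma~\ref{CalculValuationsP-Adiques}, and you have spelled out precisely that bookkeeping (including the nested-floor identity needed to match the definition of $M_\Q(d,p)$; your caution about the upper limit of the product is also well placed, since the displayed formula should run to $i=d$).

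One small correction on the final remark: Proposition~\ref{PropEmbeddingFinite} cannot be ``run in reverse''---reduction modulo $\ell$ does not lift a $p$-Sylow of $\GL_d(\F_\ell)$ back into $\GL_d(\Q)$. But the corollary only asserts optimality \emph{for $\GL_d(\F_\ell)$}, and your Sylow argument already gives that; optimality for $\GL_d(\Q)$ is handled separately (e.g.\ via the explicit construction of Proposition~\ref{PropBorneOptimale}, as you note).
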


To prove Theorem \ref{MinkowskiBoundLinear}, consider a finite group $G \subset \GL_d (\Q)$, then apply Dirichlet's
theorem and Proposition \ref{PropEmbeddingFinite} to embed $G$ in $\GL_d(\F_\ell)$ for some prime generator $\ell$ of
$(\Z / p^2 \Z)^*$. The corollary gives the desired upper bound.

\begin{rmq}\label{remarkCasPairLinear}
  The case $p=2$ is also treated by Minkowski and in fact the same bound applies. However the proof is slightly
  different as it is required to embed $G$ into an orthogonal group over a finite field. Indeed, If $\ell$ is an odd prime
  then the best bound one can get is $v_2(\GL_d(\F_l)) \leq M(d,2) + \lfloor d/2 \rfloor$ with equality with the right
  choice of $\ell$ (see Proposition \ref{ConstanteMajoréeParBorneSchur}). To embed a finite group $H$ of
  matrices over $\Q$ into an orthogonal group over a finite field, one just need to look at the positive
  definite bilinear form $\psi := \sum_{h \in H} {}^t \! H H$.  For any prime $\ell$ large enough such that
  $\ell$ does not divide $\det \psi$, the group homomorphism of reduction mod $\ell$ induces an embedding of
  $H$ into an orthogonal group over $\F_l$, however this process does not generalize well when looking at
  polynomial automorphisms (See Remark \ref{remarkCasPairNeMarchPas}).
\end{rmq}

\subsection{The Minkowski's bound for finite groups of polynomial automorphisms with rational coefficients}

To prove Theorem \ref{SchurBoundPolynomial}, we adapt the proof of the Minkowski bound for linear automorphisms.
Actually, to conclude it suffices to show that Proposition \ref{PropEmbeddingFinite} also holds for finite $p$-subgroups of
polynomial automorphisms.

\begin{prop}\label{EmbeddingPolynomialCase}
  Let $d$ be an integer. Let $G$ be a finite $p$-subgroup of $\Aut(\A_\Q^d)$.
  Then, there exists a prime $\ell$ such that
  \begin{enumerate}
    \item $\ell$ is a generator of $(\Z / p^2 \Z)^{\times}$
    \item There is an injective homomorphism $G \hookrightarrow \GL_d (\F_\ell)$
  \end{enumerate}

\end{prop}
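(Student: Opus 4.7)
The plan is to adapt the proof of Proposition \ref{PropEmbeddingFinite}, with one new ingredient beyond reduction modulo a prime: the linearization of $G$ at a common fixed point over $\F_\ell$. First, since $G$ is finite, all coefficients of the $g \in G$ and of their Jacobians lie in $\Z[1/N]$ for some $N$; enlarging $N$ if needed, we may assume each Jacobian is a unit in $\Z[1/N]$. For every $g \neq \id$ let $L(g)$ be the largest prime dividing the numerator of a nonzero coefficient of $g - \id$ and set $L = \max_g L(g)$. By Dirichlet's theorem the arithmetic progression of integers reducing to a given generator of $(\Z/p^2\Z)^\times$ contains infinitely many primes, so we can pick one, say $\ell$, with $\ell > \max(N, L, p)$. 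Reduction modulo $\ell$ then defines an injective homomorphism $G \hookrightarrow \Aut(\A^d_{\F_\ell})$.

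Next, consider the action of $G$ on the finite set $\A^d(\F_\ell)$, of cardinality $\ell^d$. Since $G$ is a $p$-group and $\ell \neq p$, we have $\ell^d \not\equiv 0 \pmod p$, so the standard $p$-group fixed-point lemma $|X^G| \equiv |X| \pmod p$ yields a point $x \in \A^d(\F_\ell)$ fixed by every $g \in G$. The differential at $x$ defines a map $\varphi : G \to \GL_d(\F_\ell)$, $g \mapsto d_x g$. The image really lies in $\GL_d$ because the Jacobian of a polynomial automorphism is a nonzero constant (a unit mod $\ell$ by the choice of $N$), and the chain rule combined with $g(x) = x$ for all $g \in G$ gives multiplicativity. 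It remains to establish injectivity of $\varphi$.

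Suppose $g \neq \id$ lies in $\ker \varphi$, and translate coordinates so $x = 0$. Write $g(v) = v + h(v)$ with $h = \sum_{i \geq k} h_i$, each $h_i$ homogeneous of degree $i$, and $h_k \neq 0$; the assumption $d_0 g = \id$ forces $k \geq 2$. A direct Taylor expansion gives
\[
g^m(v) = v + m\, h_k(v) + (\text{terms of degree} \geq k+1)
\]
by induction on $m$, using that $h_i(v + h(v))$ agrees with $h_i(v)$ up to terms of degree $\geq i + k -1 \geq k+1$. If $g$ has order $p^e$, then $p^e h_k = 0$ in $\F_\ell[v_1, \ldots, v_d]$; since $\ell \neq p$, the integer $p^e$ is a unit in $\F_\ell$, so $h_k = 0$, a contradiction. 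Hence $\varphi$ is injective and supplies the embedding $G \hookrightarrow \GL_d(\F_\ell)$.

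The main obstacle is this last Bochner-type step: establishing the leading-order formula for $g^m$ is exactly what allows a finite $p$-group in characteristic $\ell \neq p$ to be faithfully linearized at a fixed point. The choice of $\ell$ (via Dirichlet plus the exclusion of finitely many bad primes) is precisely what makes the fixed-point lemma and the linearization argument work simultaneously, so these two ingredients together upgrade Minkowski's argument from the linear setting to the polynomial one.
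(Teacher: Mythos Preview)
Your proof is correct and follows essentially the same approach as the paper: reduce modulo a large prime $\ell$ chosen via Dirichlet's theorem, use the $p$-group fixed-point lemma (the paper phrases this as the class equation), and linearize at the fixed point, proving injectivity via the leading-term formula $g^m = \id + m\,h_k + (\text{higher order})$. The paper isolates the fixed-point and linearization step as a separate lemma (Lemma~\ref{PointFixeEtDifferentielleInj}), but the content is identical to what you wrote.
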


\begin{lemme}\label{PointFixeEtDifferentielleInj}
  Let $d$ be an integer and $p$ a prime. Let $F$ be a finite field with $\Char (F) \neq p$. Let $G$ be a finite
  subgroup of $\Aut (\A_F^d)$ of order $p^\alpha$. Then $G$ has a fixed point $x_0 \in \A^d (F) = F^d$ and the homomorphism
  \[
  \begin{array}{lrcl}
    \Phi:& G & \longrightarrow & \GL_d (F)\\
          & g & \longmapsto & D_{x_0} g
  \end{array}
\]

  is injective.
\end{lemme}

\begin{proof}
  The group $G$ acts on $F^d$ which is of size $\abs F^d$. Since $\abs G = p^\alpha$ and $p$ does not divide $\abs F$,
  the class equations gives the existence of at least one trivial $G$-orbit in $F^d$; hence, the existence of a fixed
  point $x_0 \in F^d$.

  Up to a translation we can suppose that $x_0 = 0$.  Now to show the injectivity of $\Phi$. Take $g$ in $G$ such that
  $D_{0} g = \id$, then \[ g(x_1,\cdots, x_d) = g(\x) = \id + \sum_{j \geq 2} A_j (\x) \]
  where $A_j$ is the homogeneous part of $g$ of degree $j$.
  Suppose that $g \neq \id$, let $j_0$ be the lowest index $j \geq 2$ such that $A_j \neq 0$. We rewrite $g$ as $g =
  \id + A_{j_0} + B$ where $B = \sum_{j > j_0} A_j$ and compute the second iterate

  \begin{align*}
    g^2 (\x) &= g(\x) + A_{j_0} (g(\x)) + B (g(\x)) \\
      &= \id + A_{j_0} (\x) +  B(\x) + A_{j_0} (\x + A_{j_0} (\x) + B(\x) ) + B(g (\x)) \\
      &= \id + 2 A_{j_0} (\x) + (\text{terms of higher degree}).
  \end{align*}

  And for every $k \geq 1$ we obtain
  \[ g^k (\x) = \id + k A_{j_0} (\x) + (\text{terms of higher degree}). \]

  Since, $g$ is of order $p^t$ for a certain $t>0$, replacing $k$ by $p^t$ in this formula we get $ p^t A_{j_0} (\x) =0 $,
  a contradiction since $\Char F \neq p$.
\end{proof}

\begin{rmq}\label{remarkChar0ResteInj}
  If $F$ is of characteristic $0$ and $x_0$ is fixed by $G$, then the proof shows also that $\Phi: g \mapsto
  D_{x_0} g$ is injective.
\end{rmq}

\begin{proof}[Proof of Theorem \ref{SchurBoundPolynomial} when $\k = \Q$]
  As in the linear case, we can find an integer $N$ such that $G \subset \Aut (\A_{\Z[1 / N]}^d)$. So, for $\ell > N$ prime ,
  reduction modulo $\ell$ is well defined on $G$. Now, for $\ell$ large enough such that $\ell$ does not
  divide any coefficient of $g - \id$ for all $g \in G \subset \Aut (\A_{\Z [ 1/N]}^d)$, this homomorphism is injective and we
  can use Dirichlet's theorem to ensure that $\ell$ is a generator of $(\Z / p^2 \Z)^{\times}$. $G$ is now embedded in
  $\Aut (\A_{\F_\ell}^d)$ and we replace it by its image in $\Aut(\A^d_{\F_\ell})$. By Lemma \ref{PointFixeEtDifferentielleInj},
  there is a point $x_0 \in \F_\ell^d$ fixed by $G$ and we have an injective homomorphism $\Phi: G \hookrightarrow \GL_d
  (\F_\ell)$. This concludes the proof when $p \neq 2$.

\end{proof}

\subsection{Extension of Minkoswski's bound to number fields} \label{SubsecSchurBoundNumberFields}

\paragraph{Strategy.--} This part is dedicated to the proof of Schur's bound for finite $p$-groups of
polynomial automorphisms over arbitrary number fields. We will then prove Theorem
\ref{SchurBoundPolynomial} using a Sylow argument. As in the previous section, we want to show the

\begin{thm}\label{theoremPlongementCorpsFiniBonCardinal}
  Let $\k$ be a number field, $d$ an integer and $p$ be an odd prime. Let $G$ be a finite $p$-subgroup of $\Aut_\k
  (\A^d)$, then there exists a finite field $\F$ with $\Char \F \neq p$ and an injective group homomorphism $G \hookrightarrow \GL_d
  (\F)$ such that $v_p (\abs{\GL_d(\F)}) \leq M_\k (d,p)$.
\end{thm}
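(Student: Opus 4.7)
I would follow closely the strategy of the $\k=\Q$ case, replacing Dirichlet's theorem by Chebotarev's density theorem applied to a cyclotomic tower over $\k$. Since $G$ is finite, there exists $N$ with $G \subset \Aut(\A^d_R)$ for $R := \mathcal{O}_\k[1/N]$. For each $g \neq \id$ in $G$ fix a nonzero coefficient $a_g \in R$ of $g - \id$; avoiding the finitely many maximal ideals of $R$ that contain some $a_g$ or that lie above $p$ guarantees injectivity of the reduction $G \to \Aut(\A^d_\F)$ for any remaining residue field $\F = R/\mathfrak m$, with $\Char\F \neq p$.

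Set $t := t(\k;p)$ and $m := m(\k;p)$. The heart of the matter is to choose $\mathfrak m$ so that the cyclotomic invariants of $\F$ mirror those of $\k$, i.e.\ $q := \abs{\F}$ has order $t$ in $(\Z/p\Z)^{\times}$ and $v_p(q^t - 1) = m$. To arrange this, I would consider the tower $\k \subset \k(z_p) = \k(z_{p^m}) \subset \k(z_{p^{m+1}})$. The first step has degree $t$ by definition; the second has degree $p$, since $z_{p^{m+1}}$ is a root of $X^p - z_{p^m}$ and cannot lie in $\k(z_p)$ by maximality of $m$. The full extension is Galois with group $H := \mathrm{Gal}(\k(z_{p^{m+1}})/\k)$ embedded in $(\Z/p^{m+1}\Z)^{\times}$, which is cyclic of order $p^m(p-1)$ because $p$ is odd; hence $H$ is cyclic of order $tp$. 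Chebotarev's density theorem then produces infinitely many primes $\mathfrak p$ of $\k$ whose Frobenius generates $H$, which amounts to $q \bmod p^{m+1}$ being a generator of $H$, i.e., exactly the two conditions above. Picking such a $\mathfrak p$ outside the finite bad set above gives the sought-after residue field $\F$.

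With this $\F$, Lemma \ref{PointFixeEtDifferentielleInj} furnishes a point fixed by $G$ and thereby an injective homomorphism $G \hookrightarrow \GL_d(\F)$. It remains to check that $v_p(\abs{\GL_d(\F)}) = M_\k(d,p)$. From $\abs{\GL_d(\F)} = q^{d(d-1)/2} \prod_{i=1}^d (q^i - 1)$ and the fact that $q$ has order $t$ modulo $p$, only indices $i$ divisible by $t$ contribute. Applying the lifting-the-exponent lemma (the analogue of Lemma \ref{CalculValuationsP-Adiques}) to $s := q^t$ yields $v_p(s^j - 1) = m + v_p(j)$ for every $j \geq 1$, so
\[
v_p(\abs{\GL_d(\F)}) = \sum_{j=1}^{\lfloor d/t\rfloor}\bigl(m + v_p(j)\bigr) = m\lfloor d/t\rfloor + v_p(\lfloor d/t\rfloor!),
\]
which equals $M_\k(d,p)$ by Legendre's formula and the floor identity $\lfloor\lfloor d/t\rfloor/p^k\rfloor = \lfloor d/(tp^k)\rfloor$.

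The main obstacle is the Chebotarev step: verifying that $H$ is cyclic of order exactly $tp$ with a Frobenius generator. Once this structural fact about the cyclotomic tower is in place, everything else is a direct transplant of the $\k=\Q$ argument, combined with the $p$-adic valuation computations already developed for $(q^i - 1)$.
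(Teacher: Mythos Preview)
Your argument is correct and essentially parallel to the paper's, but the packaging differs in a way worth noting. The paper does not invoke Chebotarev's theorem directly on a single finite cyclotomic layer; instead it introduces the $p^\infty$-cyclotomic character $\chi_{p^\infty}:\Gamma_\k\to\Z_p^\times$, quotes a density result of Serre (Theorem \ref{OuvertDenseAvecAnneauNormal}) saying that for any $c\in\im\chi_n$ the set of maximal ideals $\m$ with $N(\m)\equiv c\pmod{n}$ is Zariski-dense in $\Specmax(\mathcal O_\k)$, and then for each $u\in\im\chi_{p^\infty}$ produces a sequence $\m_j$ with $N(\m_j)\to u$ in $\Z_p^\times$. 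This gives $\inf_{\m}\sum_i v_p(N(\m)^i-1)\le\inf_{u}\sum_i v_p(u^i-1)=:M'_\k(d,p)$, and a separate computation (Proposition \ref{ConstanteMajoréeParBorneSchur}) identifies $M'_\k(d,p)=M_\k(d,p)$ for $p$ odd. Your route instead isolates from the start the \emph{optimal} $u$ --- namely an element of order $tp$ in $(\Z/p^{m+1}\Z)^\times$ --- and hits it with a single Chebotarev application to $\k(z_{p^{m+1}})/\k$, obtaining $v_p(\lvert\GL_d(\F)\rvert)=M_\k(d,p)$ on the nose. The two are equivalent in content (Serre's density theorem is itself a consequence of Chebotarev for cyclotomic extensions), but yours is more concrete for odd $p$, while the paper's $\chi_{p^\infty}$ framework is what allows the uniform treatment of the three cases for $p=2$ in Proposition \ref{ImageCaractereCyclotomique}.
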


Indeed, this would prove that $v_p ( \abs G) \leq v_p (\abs{\GL_d (\F)}) \leq M_\k (d,p)$. The natural idea is to do an
analog of the proof for $\k =\Q$. Replace $\Z$ by the ring of integers $L := \mathcal O_\k$ of $\k$, then for any maximal ideal
$\m$ of $L$ lying over a sufficiently large prime, there is an injective homomorphism $G \hookrightarrow \Aut
(\A_{L / \m}^d)$. By taking differentials at a fixed point over $L / \m$ we would see $G$ as a subgroup of $\GL_d( L / \m)$ and
the order of $\GL_d( L /\m)$ would give a bound $v_p (\abs{G}) \leq \sum_{i=1}^d v_p (\abs{L / \m}^i -1)$. The remaining
part is to choose $\m$ wisely so that we get the lowest bound possible. To do this, we use cyclotomic characters.

\paragraph{Cyclotomic characters.--}In this part, $\k$ is a finitely generated field over $\Q$. We denote
by $\mu_{n}$ the
group of $n$-th roots of unity in $\overline \k$.
Recall that $\Aut (\mu_{n}) = (\Z / n \Z)^{\times}$ because every automorphism $\phi$ is of the form $\phi(\omega) =
\omega^a$ where $a \in (\Z / n \Z)^\times$.
\begin{dfn}[Cyclotomic character]
  Denote by $\Gamma_\k = \Gal(\overline \k / \k)$ the absolute Galois
  group of $\k$. For every $n \geq 1, \Gamma_\k$ preserves the group $\mu_{n} \subset \overline \k^{\times}$ of
  $n$-th roots of unity, this induces a group homomorphism
  \[
  \chi_n : \Gamma_\k \rightarrow \Aut (\mu_n) = (\Z / n \Z)^\times  \]
  called the \emph{$n$-th cyclotomic character of $\k$}. In particular, if $p$ is a prime number, since the
  inclusion $\mu_{p^n} \subset \mu_{p^{n+1}}$ induces a group homomorphism $\Aut
  (\mu_{p^{n+1}}) = (\Z / p^{n+1} \Z)^\times \rightarrow \Aut (\mu_{p^n}) = (\Z / p^n \Z)^\times$, we have a compatible
  family of homomorphisms
  \[ \chi_{p^n} : \Gamma_\k \rightarrow \Aut (\mu_{p^n} ). \]
  This family of homomorphisms induces the $p^\infty$-\emph{cyclotomic character}
  \[ \chi_{p^\infty} : \Gamma_\k \rightarrow \Z_p^{\times} = \lim_{\longleftarrow} (\Z / p^n \Z)^{\times} \]
  where $\Z_p$ is the ring of $p$-adic integers. This homomorphism is continuous with respect to the profinite topologies
  on $\Gamma_\k$ and $\Z_p^\times$.
\end{dfn}

We are interested in the image of $\chi_{p^\infty}$ which is a closed subgroup of $\Z_p^{\times}$. Define $t(\k; p)$ and
$m(\k; p)$ as in Section \ref{par:Minkowski_Schur}. The number $m(\k; p)$ is always finite if $\k$ is
finitely generated over $\Q$ (see \cite{SerreBoundsOrder}, $\S4.3$). If $s$ is an integer, we denote by
  $C_s$ the cyclic group of order $s$.

  \begin{prop}[\cite{SerreBoundsOrder}, $\S 4$]\label{ImageCaractereCyclotomique} $\phantom{-}$
  \begin{enumerate}
    \item If $p$ is an odd prime, one has
      \[ \Z_p^{\times} \simeq C_{p-1} \times (1 + p \cdot \Z_p). \]
      The group $1 + p \cdot \Z_p$ is a procyclic subgroup generated by $1+p$ as
      a topological group and isomorphic to the additive group $\Z_p$. Its closed subgroups are the groups $1 + p^j \Z_p$ with
      $j\geq 1$.

      Furthermore, one has
      \[ \im \chi_{p^\infty} = C_{t(\k;p)} \times \left\{  1 + p^{m(\k;p)} \cdot \Z_p \right\}. \]
    \item If $p=2$, then $\Z_2^{\times} = C_2 \times \left\{ 1 + 4 \cdot \Z_2 \right\}$. There are 3
      possibilities for $\im \chi_{2^\infty}$:
      \begin{enumerate}
        \item $\im \chi_{2^\infty} = 1 + 2^{m(\k;p)} \cdot \Z_2$ and then $t(\k;p)=1$.
        \item $\im \chi_{2^\infty} = \langle -1 + 2^{m(\k;p)-1} \rangle$ (the closure of the group generated by $-1 +
          2^{m(\k;p)-1} $) and then $t(\k;p)=2$.
        \item $\im \chi_{2^\infty} = C_2 \times \left\{  1 + 2^{m(\k;p)} \Z_2 \right\}$ and then $t(\k;p)=2$.
      \end{enumerate}
  \end{enumerate}
\end{prop}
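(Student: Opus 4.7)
\bigskip

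The plan is to identify $\im \chi_{p^\infty}$ with the Galois group $\Gal(\k(\mu_{p^\infty})/\k)$, and then to decompose this group inside $\Z_p^\times$ using the canonical splitting into a ``torsion part'' and a ``pro-$p$ part.''

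First I would recall the structure of $\Z_p^\times$ itself. For $p$ odd, Hensel's lemma gives the Teichmüller lift $\omega:\F_p^\times \hookrightarrow \Z_p^\times$, and for $x\in \Z_p^\times$ the sequence $x^{p^n}$ converges to $\omega(\bar x)$, yielding the canonical splitting $\Z_p^\times = C_{p-1}\times(1+p\Z_p)$. The $p$-adic logarithm restricted to $1+p\Z_p$ is a topological group isomorphism onto $p\Z_p\cong\Z_p$ (converges because $v_p(x^i/i)>v_p(x)$ when $p$ is odd and $x\in p\Z_p$); since the closed subgroups of $\Z_p$ are the $p^k\Z_p$, the closed subgroups of $1+p\Z_p$ are exactly the $1+p^j\Z_p$ for $j\geq 1$, and $1+p$ is a topological generator. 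For $p=2$ the same argument works, except $\log$ converges only on $1+4\Z_2$, giving $\Z_2^\times = \{\pm 1\}\times(1+4\Z_2)$.

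Next, let $G=\im\chi_{p^\infty}$. By construction $\chi_{p^\infty}$ factors through $\Gal(\k(\mu_{p^\infty})/\k)$, and its image corresponds, by infinite Galois theory, to the actual Galois group of this (pro-$p$-by-cyclic-of-order-dividing-$p-1$) extension. Set $G_0 := G\cap(1+p\Z_p)$ (for $p$ odd; use $1+4\Z_2$ for $p=2$). By the Galois correspondence, $G_0$ is precisely $\Gal(\k(\mu_{p^\infty})/\k(z_p))$: indeed $a\in\Z_p^\times$ acts trivially on $z_p$ iff $a\equiv 1 \pmod p$. The quotient $G/G_0$ injects into $C_{p-1}$ and identifies with $\Gal(\k(z_p)/\k)$, which by definition has order $t(\k;p)$; since $C_{p-1}$ is cyclic this projection lands in the unique subgroup $C_{t(\k;p)}$.

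Now I pin down $G_0$ as a closed subgroup of $1+p\Z_p$. By the structure recalled above, $G_0 = 1+p^j\Z_p$ for a unique $j\geq 1$ (or is trivial, corresponding to $j=\infty$). An element $a\in G_0$ acts on $z_{p^n}$ as $z_{p^n}^a$, so $a\equiv 1\pmod{p^n}$ iff $a$ fixes $z_{p^n}$. Since every element of $G_0$ fixes $\k(z_p)\supset\Q(z_{p^{m(\k;p)}})$ we get $G_0\subset 1+p^{m(\k;p)}\Z_p$; conversely, maximality of $m(\k;p)$ means $z_{p^{m(\k;p)+1}}\notin\k(z_p)$, so some $a\in G_0$ does not fix it, hence $a\not\equiv 1\pmod{p^{m(\k;p)+1}}$, so $G_0\not\subset 1+p^{m(\k;p)+1}\Z_p$. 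This forces $j=m(\k;p)$. Finally, the splitting $G = (G\cap C_{p-1})\times G_0$ comes for free from the Teichmüller splitting: for $x\in G$ one has $\omega(x) = \lim_n x^{p^n}\in G$ since $G$ is closed, hence the torsion part of $G$ lies in $G$, and every $x\in G$ factors as $\omega(x)\cdot(\omega(x)^{-1}x)$ with both factors in $G$. This yields $G=C_{t(\k;p)}\times(1+p^{m(\k;p)}\Z_p)$.

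For $p=2$ the argument is identical on the pro-$2$ factor $1+4\Z_2$, giving $G\cap(1+4\Z_2) = 1+2^{m(\k;2)}\Z_2$, and $G/(G\cap(1+4\Z_2))$ injects into $C_2=\{\pm 1\}$. The main subtlety, and what I expect to be the only real obstacle, is that $\Z_2^\times$ has extra $2$-torsion from $-1$, so the image in the $\{\pm 1\}$-factor is not independent from the image in the pro-$2$ factor: this is what produces the three cases. The trichotomy is exactly: either $t(\k;2)=1$, which is case (a); or $t(\k;2)=2$ and $-1\in G$, which splits as a direct product, case (c); or $t(\k;2)=2$ and $-1\notin G$, in which case the projection to $\{\pm 1\}$ has a set-theoretic section sending $-1$ to an element of $-1+2^{m(\k;2)-1}\cdot(\text{something})$, and a careful look at which element of $G$ reduces to $-1\pmod{4}$ forces the topological generator to be $-1+2^{m(\k;2)-1}$, giving case (b). I would handle this by examining the short exact sequence $1\to G_0\to G\to G/G_0\to 1$ and using that any complement to $G_0$ in $G$ must be a cyclic subgroup of $\Z_2^\times$ of order $2$, which, modulo $G_0 = 1+2^{m(\k;2)}\Z_2$, pins down the generator up to the stated ambiguity.
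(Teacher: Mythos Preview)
The paper does not give its own proof of this proposition: it is simply quoted from Serre \cite{SerreBoundsOrder}, \S4, and used as a black box. So there is nothing to compare against on the paper's side.

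Your argument for odd $p$ is correct and complete. The identification $G_0 = G\cap(1+p\Z_p) = \Gal(\k(\mu_{p^\infty})/\k(z_p))$, the determination $G_0 = 1+p^{m(\k;p)}\Z_p$ via the definition of $m(\k;p)$, and the Teichm\"uller splitting of $G$ (using that $G$ is closed) are all fine. For $p=2$, cases (a) and (c) are also fine.

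The only genuine gap is in case~(b). You correctly set up the situation: $G_0 = 1+2^m\Z_2$ with $m=m(\k;2)$, $[G:G_0]=2$, and $-1\notin G$, so any $g\in G\setminus G_0$ has the form $g = -1 + 2^j u$ with $u\in\Z_2^\times$ and $2\leq j < m$ (the upper bound $j<m$ being exactly the condition $-1\notin G$, since $-1\in gG_0 \Leftrightarrow g\equiv -1\pmod{2^m}$). What is missing is the reason $j=m-1$. The point is that $g^2\in G_0$, and a direct computation gives
\[
g^2 = 1 - 2^{j+1}u\bigl(1-2^{j-1}u\bigr),
\]
so $v_2(g^2-1)=j+1$ (as $j\geq 2$ makes $1-2^{j-1}u$ a unit). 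Since $g^2\in 1+2^m\Z_2$ this forces $j+1\geq m$, hence $j=m-1$. Then $g$ lies in $-1+2^{m-1}+2^m\Z_2 = (-1+2^{m-1})\cdot G_0$, and since $g^2$ topologically generates $G_0$, the element $-1+2^{m-1}$ topologically generates $G$. Once you insert this computation, your proof is complete.
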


\begin{rmq}
  Those 3 cases are distinct when $m(\k,p) \neq \infty$. We will refer as $\k$ being in case (a), (b), or (c) when $\im
  \chi_{2^\infty}$ is of the form (a),(b) or (c) of Proposition \ref{ImageCaractereCyclotomique}.
\end{rmq}

Recall that an integral domain $L$ is \emph{normal} if every localisation at a prime ideal
of $L$ is integrally closed. Let $L$ be a normal domain that is finitely generated over $\Z$ such that the
fraction field of $L$ is $\k$. For
any maximal ideal $\m \subset L$, the quotient $L / \m$ is finite by the Nullstellensatz for Jacobson
rings and $N(\m) := \abs{L / \m}$ is the \emph{norm} of
$\m$. Recall that for a ring $R$, $\Spec R$ denotes the set of prime ideals of $R$ and $\Specmax R$ the set of its
maximal ideals both with the Zariski topology. The following theorem is proven in \cite[$\S 6$ Theorem
7]{SerreBoundsOrder}.

\begin{thm}\label{OuvertDenseAvecAnneauNormal}
  Let $L$ be a normal domain finitely generated over $\Z$ such that the fraction field of $L$ is $\k$. Let $n$
  be an integer and $c$ an element of $(\Z / n\Z)^{\times}$. Denote by $X_c$ the set of elements $x \in \Specmax(L)$ such
  that $N(x) \equiv c \mod n$. Then:
  \begin{enumerate}
    \item If $c \not \in \im \chi_n$, $X_c = \emptyset$.
    \item If $c \in \im \chi_n$, then $X_c$ is Zariski-dense in $\Specmax (L)$. In particular, $X_c$ is infinite.
  \end{enumerate}
\end{thm}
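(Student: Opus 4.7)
The plan is to interpret the statement as a Chebotarev density result for the finite Galois cover arising from the cyclotomic extension. Let $L' \subset \overline{\k}$ denote the integral closure of $L$ in $\k(\mu_n)$. Since $L$ is normal, $L \subset L'$ is a finite extension of normal, finitely generated $\Z$-algebras, and $\Gal(\k(\mu_n)/\k)$ is identified via the cyclotomic character with $G := \im \chi_n \subset (\Z/n\Z)^\times$. Because $\Z[\mu_n]$ is unramified over $\Z$ outside the primes dividing $n$, the morphism $\Spec L' \to \Spec L$ is étale above every $\m \in \Specmax L$ whose residue characteristic does not divide $n$.

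For part (1), assume $\m \in X_c$ with $c \in (\Z/n\Z)^\times$. Since $c$ is a unit modulo $n$, we must have $\gcd(N(\m),n) = 1$, and hence $\m$ is unramified in $L'/L$. For any prime $\m' \subset L'$ above $\m$, the associated Frobenius element $\sigma_{\m'} \in G$ acts on $\mu_n$ by $\zeta \mapsto \zeta^{N(\m)}$, so by the very definition of $\chi_n$ we get $\chi_n(\sigma_{\m'}) \equiv N(\m) \equiv c \pmod{n}$. This forces $c \in G$, which is the contrapositive of (1).

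For part (2), fix $c \in G$ and an arbitrary non-zero $f \in L$; by definition of Zariski density it suffices to produce a maximal ideal $\m$ with $f \notin \m$ and $N(\m) \equiv c \pmod{n}$. After replacing $L$ by $L[1/(nf)]$ and $L'$ by the corresponding localisation, we may assume that $\Spec L' \to \Spec L$ is a finite étale Galois cover with group $G$ everywhere. One then invokes a Chebotarev-type density theorem for finite étale covers of integral, finite-type schemes over $\Spec \Z$: for every $c \in G$ there exist infinitely many closed points $x$ whose Frobenius equals $c$, and any such $x$ lies in $X_c$ because its Frobenius $\sigma_x$ satisfies $\chi_n(\sigma_x) \equiv N(x) \pmod{n}$.

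The main obstacle is the last step, the appeal to Chebotarev in an arithmetic-geometric setting rather than in its classical number-field form. The standard reduction proceeds either by slicing $\Spec L$ with a sufficiently general one-dimensional closed subscheme (obtained via a Noether normalisation of $L$ over $\Z$) and transferring the Frobenius data along the slice, or by invoking equidistribution of Frobenius conjugacy classes for finite étale covers. Either route is technical but classical; since the result is attributed in the excerpt to Serre, one may alternatively import it verbatim from \cite{SerreBoundsOrder}.
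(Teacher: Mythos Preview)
The paper does not give its own proof of this theorem; it simply cites \cite[\S6, Theorem~7]{SerreBoundsOrder} and uses the result as a black box. Your sketch is therefore not being compared against an argument in the paper but against Serre's original proof, and in that respect your outline is faithful: Serre's proof proceeds exactly by interpreting $N(\m) \bmod n$ as the Frobenius in the cyclotomic cover $\Spec L' \to \Spec L$ and then invoking a Chebotarev-type density statement for closed points of finite-type $\Z$-schemes. Your treatment of part~(1) is complete and correct. For part~(2) you correctly identify the genuine input as the higher-dimensional Chebotarev theorem, and your two suggested routes (slicing down to dimension one via Noether normalisation, or appealing to general equidistribution of Frobenius classes) are both standard; Serre himself uses a reduction of this kind. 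Since you end by proposing to import the result from \cite{SerreBoundsOrder} anyway, your proposal is in full agreement with what the paper actually does.
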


In particular, the ring of integers of a number field is normal because it is integrally
closed and this property is stable under localisation. So Theorem \ref{OuvertDenseAvecAnneauNormal} holds
for $L$ the ring of integers of a number field.

\paragraph{Valuations.--}
  We define the constant
  \[ M'_\k (d,p) = \inf_{u \in \im \chi_{p^\infty}} \sum_{i=1}^d v_p (u^i -1). \]

The next proposition is adapted from Proposition 4, $\S 6$ of \cite{SerreBoundsOrder} to our context.

\begin{prop}\label{ConstanteMajoréeParBorneSchur}
  One has
  \begin{enumerate}[label=(\alph*)]
    \item If $p \neq 2$ or if $p=2$ and $t(\k;p)=1$ ($\k$ is in case (a)), then
      \[ M'_\k (d,p) = \sum_{\substack{ i=1 \\ t(\k;p) | i}}^d (m(\k;p)+ v_p(i)) = M_\k (d,p). \]
    \item If $p=2$, $t(\k;p)=2$ and $\k$ is in case (b), one has
      \[ M'_\k (d,2) = r_1 + (m(\k;p)-1)r_0 + \sum_{i=1}^d v_2 (i) = M_\k (d,2) \]
      where $r_1$ is the number of odd integers between $1$ and $d$ and $r_0$ the number of even integers
      in this range.
    \item If $p=2$, $t(\k;p)=2$ and $\k$ is in case(c), one has
      \[ M'_\k(d,2) = r_1 + m(\k;p)r_0 + \sum_{i=1}^d v_2 (i) = \ent{\frac{d}{2}} + M_\k (d,2) \]
      with the same definition for $r_1$ and $r_0$.
  \end{enumerate}
\end{prop}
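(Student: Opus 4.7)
The plan is to parametrize an arbitrary $u \in \im \chi_{p^\infty}$ via the description of Proposition \ref{ImageCaractereCyclotomique}, compute $v_p(u^i - 1)$ for $1 \le i \le d$ by lifting the exponent, identify the $u$ that minimizes the sum, and convert the answer to closed form using Legendre's formula $v_p(n!) = \sum_{k \ge 1} \lfloor n/p^k \rfloor$.

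For odd $p$ (case (a)), write $u = \zeta v$ with $\zeta \in C_{t(\k;p)}$ and $v \in 1 + p^{m(\k;p)} \Z_p$. Two alternatives arise for each $i$: if $\zeta^i \neq 1$, then $u^i \equiv \zeta^i \pmod{p^{m(\k;p)}}$, and since $\zeta^i$ is a nontrivial root of unity of order dividing $p-1$, $\zeta^i - 1$ is a $p$-adic unit, giving $v_p(u^i - 1) = 0$; if $\zeta^i = 1$, then $u^i = v^i$ and the classical lifting-the-exponent lemma gives $v_p(u^i - 1) = v_p(v - 1) + v_p(i)$. The sum is therefore minimized by taking $\zeta$ of maximal order $t(\k;p)$ and $v_p(v-1) = m(\k;p)$ exactly; writing $i = t(\k;p) j$ and using $v_p(t(\k;p)) = 0$, Legendre's formula then identifies the result with $M_\k(d,p)$. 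The subcase $p=2$ with $t(\k;2) = 1$ runs identically using the LTE identity $v_2(v^i - 1) = v_2(v-1) + v_2(i)$, valid for $v \in 1 + 4\Z_2$.

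For $p = 2$ with $t(\k;2) = 2$, one handles (b) and (c) separately but by the same principle. In case (c) write $u = \epsilon v$ with $\epsilon \in \{\pm 1\}$ and $v \in 1 + 2^{m(\k;2)} \Z_2$; the choice $\epsilon = -1$ gives $v_2(u^i - 1) = 1$ for odd $i$ (since $u^i \equiv -1 \pmod 4$) and $v_2(u^i - 1) = m(\k;2) + v_2(i)$ for even $i$, yielding $r_1 + m(\k;2) r_0 + v_2(d!)$. In case (b) the topological generator $u_0 = -1 + 2^{m(\k;2)-1}$ mixes the two factors of $\Z_2^{\times}$; a short direct computation shows $-u_0 \in 1 + 4 \Z_2$ with $v_2(-u_0 - 1) = m(\k;2) - 1$, so for even $i$, $u_0^i = (-u_0)^i$ and LTE gives $v_2(u_0^i - 1) = (m(\k;2) - 1) + v_2(i)$, whereas for odd $i$ one has $u_0^i \equiv -1 \pmod 4$ and $v_2(u_0^i - 1) = 1$, producing $r_1 + (m(\k;2)-1) r_0 + v_2(d!)$.

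The principal obstacle is confirming that the explicit choices above actually attain the infimum rather than merely providing an upper bound. For odd $p$ one must rule out that some $\zeta'$ of smaller order or $v'$ with deeper $p$-adic valuation could do better: reducing the order of $\zeta$ strictly worsens the sum by inserting contributions $\ge m(\k;p) + v_p(i)$ in place of zeros, and increasing $v_p(v-1)$ only raises each nonzero contribution. In case (b) one verifies that $u_0^n$ with $n$ odd reproduces the same sum (since $v_2(ni) = v_2(i)$), while $n$ even confines $u_0^n$ to $1 + 4 \Z_2$, forcing every contribution to be at least $m(\k;2) + v_2(i)$ and thus strictly increasing the total.
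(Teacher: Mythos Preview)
Your argument is correct and follows the same strategy as the paper: parametrize $\im\chi_{p^\infty}$ via Proposition~\ref{ImageCaractereCyclotomique}, bound each $v_p(u^i-1)$ (you make the lifting-the-exponent step explicit, whereas the paper states the inequalities $v_p(u^i-1)\geq m'+v_p(i)$ directly), and then exhibit a specific $u$ realising the infimum---your choices $u=\zeta v$ and $u_0=-1+2^{m(\k;2)-1}$ coincide with the paper's $u=zx$ and $x=-1+2^{m'}$. The one point you leave implicit is that in case~(c) the alternative $\epsilon=+1$ must also be ruled out: there every term satisfies $v_2(u^i-1)\geq m(\k;2)+v_2(i)$, so the total is at least $d\cdot m(\k;2)+v_2(d!)\geq r_1+m(\k;2)r_0+v_2(d!)$ since $m(\k;2)\geq 2$, and hence $\epsilon=-1$ is indeed optimal.
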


\begin{proof}
  Set $t= t(\k;p), m = m(\k;p)$.
  We start with the case $p \neq 2$. First if $t$ divides $i$, then $v_p (u^i -1) \geq m + v_p(i)$. This is because $u$
  can be written as $zv$ with $z^t =1$ and $v_p (v-1) \geq m$, so $v_p (u^i -1) = v_p (v^i-1)$. So we have an inequality
  $M'_\k(d,p) \geq \sum_{\substack{ i=1 \\ t | i}}^d (m+ v_p(i))$. To have the opposite one, choose $u \in \im
  \chi_{p^\infty}$ such that $u = zx$ with $z$ of order $t$ and $v_p(x -1) = m$. This also works for $p=2$ and $t=1$.

  Suppose now that $p=2$ and $t=2$, Define $m'= m-1$ in case (b) and $m' = m$ in case (c). Then for every $x \in \im \chi_{2^\infty}$,
  \begin{align*}
v_2(x^i -1) &\geq m' + v_2 (i) \text{ if } i \text{ is even.} \\
  v_2(x^i -1)& \geq 1 \text{ if } i \text{ is odd.}
  \end{align*}
  This gives
  \[ M'_\k(d,2) \geq \sum_{i \text{ odd}} 1 + \sum_{i \text{ even}} (m' + v_2 (i) ) = r_1 + m' r_0 + \sum_{i \text{
  even}} v_2 (i).\]

  To show the opposite inequality, we use the fact that $x =-1 + 2^{m'} \in \im \chi_{2^\infty}$ and we check that $\sum_{i=1}^d v_2(x^i-1) = r_1 + m'r_0 + \sum_{i=1}^d v_2 (i)$.

  Now, to show the different equalities, notice that for (a):
  \[ M'_\k (d,p) = m \cdot \ent{\frac{d}{t}} + \sum_{i=1}^{\ent{\frac{d}{t}}} v_p(ti). \]
  Now, since $t$ divides $p-1$, one has $v_p(ti) = v_p(i)$ and the rest of the computation is similar as in the case $\k = \Q$.

  For (b) and (c), we have $r_0 = \ent{\frac{d}{2}}$ and $r_1 = d - r_0$.
  \begin{align*}
M'_\k(d,2) &\leq d - \ent{\frac{d}{2}} + m' \ent{\frac{d}{2}} + \sum_{i=1}^d v_2(i)\\
      &= d + (m' -1) \ent{\frac{d}{2}} + \sum_{i=1}^d v_2 (i)  \\
      &= d + (m'-1) \ent{\frac{d}{t}} + \sum_{k \geq 1} \ent{\frac{d}{2^k}} \\
      &= d + m' \ent{\frac{d}{t}} + \sum_{k\geq 1} \ent{\frac{d}{2^k t}}.
  \end{align*}
\end{proof}

We can now state Theorem \ref{theoremPlongementCorpsFiniBonCardinal} without assuming $p$ odd.

\begin{thm}\label{theoremPlongementCorpsFiniBonCardinalPGeneral}
  Let $\k$ be a number field, $d$ an integer and $p$ be prime. Let $G$ be a finite $p$-subgroup of $\Aut_\k
  (\A^d)$, then there exists a finite field $\F$ with $\Char \F \neq p$ and an injective group homomorphism $G \hookrightarrow \GL_d
  (\F)$ such that $v_p (\abs{\GL_d(\F)}) \leq M_\k ' (d,p)$.
\end{thm}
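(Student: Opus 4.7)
The plan is to follow the strategy used when $\k = \Q$, but to choose the reduction of $G$ so that the residue field cardinality lies in the optimal residue class modulo a high power of $p$ — the one realizing the infimum in the definition of $M'_\k(d,p)$. The core new ingredients compared to the rational case are Theorem \ref{OuvertDenseAvecAnneauNormal} (Zariski density of maximal ideals with prescribed norm) and the description of $\im \chi_{p^\infty}$ from Proposition \ref{ImageCaractereCyclotomique}.

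Concretely, since $G$ is finite there is an integer $N_0$ with $G \subset \Aut(\A_L^d)$ for $L := \mathcal O_\k[1/N_0]$; this ring is normal, finitely generated over $\Z$, and has fraction field $\k$, so Theorem \ref{OuvertDenseAvecAnneauNormal} applies. I would pick $u_0 \in \im \chi_{p^\infty}$ achieving the infimum $M'_\k(d,p)$ (the infimum is attained since $u \mapsto \sum_{i=1}^d v_p(u^i - 1)$ is lower semi-continuous on the compact set $\im \chi_{p^\infty}$), fix an integer $N > M'_\k(d,p)$, and let $c \in (\Z / p^N \Z)^\times$ be the image of $u_0$, so $c \in \im \chi_{p^N}$. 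By Theorem \ref{OuvertDenseAvecAnneauNormal}, the set $X_c$ of $\m \in \Specmax(L)$ with $N(\m) \equiv c \bmod p^N$ is Zariski-dense, and therefore meets the two Zariski-open conditions $p \notin \m$ (forcing $\Char(L/\m) \neq p$) and $a \notin \m$, where $a := \prod_{g \in G \setminus \{\id\}} a_g$ with $a_g \in L \setminus \{0\}$ any nonzero coefficient of the polynomial $g - \id$ (the product is nonzero because $G$ is finite). This second condition ensures that reduction modulo $\m$ is injective on $G$.

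Setting $\F := L/\m$, Lemma \ref{PointFixeEtDifferentielleInj} then provides a fixed point of $G$ in $\F^d$ and an injective differential homomorphism $G \hookrightarrow \GL_d(\F)$. Since $\abs \F = N(\m) \equiv u_0 \bmod p^N$ and $N$ exceeds every $v_p(u_0^i-1)$ for $1 \leq i \leq d$, formula \eqref{EqCardinalGLd} yields
\[
v_p(\abs{\GL_d(\F)}) = \sum_{i=1}^d v_p(\abs \F^i - 1) = \sum_{i=1}^d v_p(u_0^i - 1) = M'_\k(d,p),
\]
which gives the claimed bound (in fact with equality). The main obstacle is arranging for a residue field whose cardinality realizes the optimal class modulo $p^N$, and this is precisely what the cyclotomic-character machinery of Proposition \ref{ImageCaractereCyclotomique} and Theorem \ref{OuvertDenseAvecAnneauNormal} delivers; the rest — localizing $\mathcal O_\k$ to make $G$ integral, discarding finitely many bad maximal ideals, and reading off the $p$-adic valuation of $\abs{\GL_d(\F)}$ — is routine bookkeeping.
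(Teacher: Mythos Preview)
Your proof is correct and follows essentially the same strategy as the paper: reduce $G$ modulo a well-chosen maximal ideal of (a localization of) $\mathcal O_\k$, linearize at a fixed point via Lemma~\ref{PointFixeEtDifferentielleInj}, and invoke Theorem~\ref{OuvertDenseAvecAnneauNormal} to force $N(\m)$ into the residue class of an optimal $u \in \im\chi_{p^\infty}$ modulo a high power of $p$. The only cosmetic difference is that you select a single $\m$ directly after observing that the infimum defining $M'_\k(d,p)$ is attained (by lower semicontinuity of $v_p$ on the compact group $\im\chi_{p^\infty}$), whereas the paper avoids this observation by taking a sequence $\m_j$ with $N(\m_j) \to u$ in $\Z_p^\times$ and passing to the limit.
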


\paragraph{Proof of Theorem \ref{theoremPlongementCorpsFiniBonCardinalPGeneral}.--}
  Take $G$ a finite $p$-subgroup of $\Aut (\A_\k^d)$ with $p$ prime.
  \smallskip

  {\sl{Step 1. Reduction modulo $\mathfrak l $.-- }}
   Set $L = \mathcal O_\k$. For every
   element $a \in \k^\times$ the fractional ideal generated by $a$ is of the form (see \cite{Neukirch},
   $\S 3$)
  \[ a \cdot \mathcal O_\k = (a) = \prod_{\mathfrak l \in \Spec L} \mathfrak l^{v_\mathfrak l (a)} \]
  and the prime ideals $\mathfrak l$ such that $v_\mathfrak l (a) \neq 0$ are in finite number. For such an $\mathfrak l$
  there exists a unique prime $\ell \in \Z_+$ such that $(\ell) \subset \mathfrak l$. We define for $g \in \Aut (\A_\k^d)$
  \[ \ell_g := \max_{a \in \text{coeff}(g - \id)} \left\{ \text{prime } \ell \in \Z_+ :  \exists \mathfrak l \in \Spec L, (\ell) \subset \mathfrak l, v_\mathfrak l (a) \neq 0 \right\} \]
  where $\text{coeff} (g- \id)$ is the set of coefficients of the polynomial transformation $g - \id$. Set $M_1 = \max_{g
  \in G} \ell_g$ ($M_1 < + \infty$ since $G$ is finite) and $M = \max(M_1,p)$, then for every prime $\ell >M$ and for
  every $\m \in \Specmax(L)$ such that $(\ell) \subset \m$, we have a well-defined injective homomorphism
  \[ \Psi: G \hookrightarrow
    \Aut( \A_{\F}^d),
\]
where $\F = L / \m$. Indeed, the homomorphism of rings $\phi: L \twoheadrightarrow L / \m$ induces the homomorphism $\phi: L_\m :=
  \inv{(L \setminus \m)} L \rightarrow L/ \m$. By construction, $G$ is a subgroup of $\Aut (\A_{L_\m}^d)$, so $\phi: G
  \rightarrow \Aut (\A_{L / \m}^d)$ is well-defined and it is injective by our definition of $M$.

  \smallskip
  {\sl{Step 2. The group $\Psi(G)$.--}}
  Now, $\Psi(G)$ is a $p$-subgroup of $\Aut (\A_{\F}^d)$. Since $p \not \in \m$, we get $\Char (\F) \neq p$. By
  Proposition \ref{PointFixeEtDifferentielleInj}, there is a point $x_0$ in $\A^d(\F)$
  fixed by $\Psi(G)$ and by taking the differentials at $x_0$, we obtain an injective homomorphism $G \hookrightarrow \Psi(G) \hookrightarrow \GL_d
  (\F)$.  So, we get
  \begin{equation}\label{eq1}
    v_p (\abs G) \leq v_p \left( N(\m)^{\frac{d(d+1)}{2}} \prod_{i=1}^d (N(\m)^i-1) \right) = \sum_{i=1}^d v_p(N(\m)^i -1).
  \end{equation}

  Set $X := \left\{  \m \in \Specmax (L) :  \m | (s), \text{for some }s > M \text{ prime} \right\}$, then
  (\ref{eq1}) holds for
  all $\m \in X$ and we obtain $v_p (\abs G) \leq \inf_{\m \in X} \sum_{i=1}^d v_p(N(\m)^i -1).$ So, to conclude, all we
  have to prove is
  \begin{equation}
        \inf_{\m \in X} \sum_{i=1}^d v_p(N(\m)^i -1) \leq M'_\k(d,p).
    \label{eqInegalite}
  \end{equation}

  \smallskip
  {\sl{Step 3. Proof of \eqref{eqInegalite}.--}} The set $X$ is open in $\Specmax L$. For, $X = \left( \bigcup_{l \leq
  M, l \text{ prime}} V(l) \right)^c$ with $V(l) = \left\{  \m \in \Specmax(L) :  (l) \subset \m \right\}$
  and $V(l)$ is closed.
   Take $u \in \im \chi_{p^\infty}$. For $j \geq 1$, let $u_j$ be the projection
   of $u$ in $(\Z / p^j \Z)^{\times}$. By Theorem \ref{OuvertDenseAvecAnneauNormal} the set of maximal ideals $\m$
   such that $N(\m) \equiv u_j \mod p^j$ is dense, therefore it intersects the open subset $X$, so for every $j\geq
   1$, we can find $\m_j \in X$ such that $ N(\m_j) \equiv u_j \mod p^j$. Then, one has $\lim_{j \rightarrow
   \infty} N( \m_j) = u$ in $\Z_p^{\times}$, therefore $v_p (u^i -1) = \lim_{j \rightarrow \infty} v_p( N(\m_j)^i
   -1)$ so
   \[ \inf_{\m \in X} \sum_{i=1}^d v_p(N(\m)^i -1) \leq \sum_{i=1}^d v_p(u^i-1); \]
   and this holds for every $u \in \im \chi_{p^\infty}$. Using Proposition \ref{ConstanteMajoréeParBorneSchur}, we get
   \[ \inf_{\m \in X} \sum_{i=1}^d v_p(N(\m)^i -1) \leq \inf_{u \in \im_{\chi_{p^\infty}}} \sum_{i=1}^d v_p(u^i-1)
   = M'_\k( d,p). \]

   \paragraph{Proof of Theorem \ref{SchurBoundPolynomial} and comments.--}

   \begin{bigtheorem}\label{BigThmSchurBoundPolynomial}
     Let $\k$ be a number field, let $d$ be a natural number, and let $p$ be a prime. Let $G$ be a finite $p$-subgroup of $\Aut( \A^d_\k)$, then
  \begin{enumerate}
    \item If $p \geq 3$ or $p=2$ and $\k$ is in case (a) or (b), there exists a group embedding
      \[ G \hookrightarrow \GL_d (\k). \]
    \item If $p=2$ and $\k$ is in case (c), there exists a group embedding
      \[ G \hookrightarrow \GL_d (\k(z_4)). \]
  \end{enumerate}
\end{bigtheorem}

\begin{rmq}
  We do not state a Sylow-like property, saying that $G$ is conjugated to a subgroup of $\GL_d (\k)$, we
  only state that we can find an isomorphism of abstract groups from $G$ to a subgroup of $\GL_d (\k)$.
\end{rmq}

\begin{proof}
  For 1, we know that $v_p(\abs G) \leq M_\k (d,p)$ and that there exists a subgroup $H \subset \GL_d
  (\k)$ such that $\abs H = p^{M_\k (d,p)}$ by Theorem \ref{SchurBound}. Let $L = \mathcal O_\k$ be the ring of
  integers of $\k$. The proof of Theorem \ref{theoremPlongementCorpsFiniBonCardinalPGeneral} shows that there
  exists an infinite number of maximal
  ideals $\m$ of $L$ such that $v_p (\GL_d (\F)) \leq M_\k (d,p)$ where $\F = L / \m$. So for any such
  maximal ideal $\m \subset L$ lying over a sufficiently large prime, there are embeddings $\Psi_H: H
  \hookrightarrow \GL_d (\F)$ and $\Psi_G : G \hookrightarrow \GL_d (\F)$. Looking at the size of $H$, we
  deduce that $v_p (\GL_d (\F)) = M_\k (d,p)$ and $\Psi_H (H)$ is a $p$-Sylow of $\GL_d(\F)$. By Sylow's
  theorems, $\Psi_G(G)$ is conjugated to a subgroup of $\Psi_H(H)$ in $\GL_d(\F)$. This implies that $G$
  is isomorphic to a subgroup of $H$.

  For 2, if $\k$ is in case (c) then one can check that $\k(z_4)$ is in case (a) and that $m(\k(z_4); 2) =
  m(\k; 2)$, therefore $M_{\k(z_4)}(d,2) = M_\k (d,2) + \ent{\frac{d}{2}}$ and the same proof as 1 shows
  the result.
\end{proof}

\begin{rmq}\label{RmqFinitelyGeneratedField}
  Theorem \ref{SchurBoundPolynomial} and \ref{BigThmSchurBoundPolynomial} still hold for $\k$ finitely
  generated over $\Q$. We just need to explain how the proof of Theorem
  \ref{theoremPlongementCorpsFiniBonCardinalPGeneral} works in that case.

  We need to find a normal domain $L$ finitely generated over $\Z$ such that $G$ is defined over $L$ and
  to define the open subset $X \subset \Specmax L$ used for equation \eqref{eqInegalite}. Here
  is how to proceed: since $G$ is finite, there exists a
  finitely generated $\Z$-algebra $R$ such that the elements of $G$ are defined over $R$, we can suppose that $R$
  contains $1/p$. By Noether Normalization's Lemma and more precisely by generic freeness (see
  \cite{eisenbud1995commutative}, Theorem 14.4), there exists $t_1, \dots, t_s \in R$ and an integer $N$ such that
  $R$ is a finite free module over $\Z[1/N][t_1,\dots, t_s]$. We can then take for $L$ the integral closure of
  $\Z[1/N][t_1, \dots, t_s]$ in $\k$, $L$ is a normal domain over which
  $G$ is defined since $R \subset L$. We also have that $L$ is finitely generated over $\Z$ because by
  \cite[Theorem 4.14]{eisenbud1995commutative} it is a finite module over $\Z[1/N] [t_1, \dots, t_s]$.

  Now, let $A$ be the set of coefficients of $g - \id$ for $g \in G$. Set $X = \left\{ \m \in \Specmax L :
  A \cap \m = \emptyset \right\}$. This is an open subset of $\Specmax L$ as $A$ is finite and $X = \bigcap_{a \in A}
  V(a)^c$. For any $\m \in X$ we have an injective group homomorphism $G \hookrightarrow \Aut (\A^d_{L
  / \m})$ and Equation \eqref{eq1} holds. The proof of Equation \eqref{eqInegalite} is the same as in the
  case of number fields. This proves Theorem \ref{theoremPlongementCorpsFiniBonCardinalPGeneral} for finitely generated fields over
  $\Q$.
\end{rmq}

   To prove Theorem \ref{BigThmSchurBoundPolynomial}, the key ingredient is
   that there exists subgroups of $\GL_d(\k)$ of size $p^{M_\k(d,p)}$, as Theorem
   \ref{SchurBoundPolynomial} is stated only for number fields we show for completeness how to construct finite
   $p$-groups of $\GL_d (\k)$ of size $p^{M_\k(d,p)}$ when $\k$ is finitely generated over $\Q$. The proof
   of Theorem \ref{BigThmSchurBoundPolynomial} for finitely generated fields over $\Q$ is then similar as
   in the case of number fields using Noether Normalization Lemma, we leave the details to the reader.

   \begin{prop}
     Let $\k$ be a finitely generated field over $\Q$ and let $p$ be a prime, there exists a finite
     $p$-subgroup of $\GL_d(\k)$ of size $p^{M_\k(d,p)}$.
     \label{PropBorneOptimale}
   \end{prop}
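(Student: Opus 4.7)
My plan is to exhibit the required subgroup via Schur's explicit wreath-product construction inside the cyclotomic extension $\k(z_{p^\infty})/\k$. Since this construction uses only the integers $t := t(\k;p)$ and $m := m(\k;p)$ and the field $F := \k(z_{p^m})$, it transfers verbatim from number fields to arbitrary finitely generated extensions of $\Q$ (the key point being that $m$ remains finite in this generality). By definition $F = \k(z_p)$ has $\k$-degree $t$, and the regular representation $F \hookrightarrow \mathrm{End}_\k(F)$ gives an embedding $F^\times \hookrightarrow \GL_t(\k)$; the subgroup generated by multiplication by $z_{p^m}$ is cyclic of order $p^m$.

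Next I set $N := \ent{d/t}$ and pick a Sylow $p$-subgroup $W \leq S_N$, so $|W| = p^{v_p(N!)}$. Letting $W$ permute $N$ blocks of $t$ coordinates realizes $W$ inside $\GL_{tN}(\k)$, and combined with the previous step this gives an embedding
\[
H := C_{p^m} \wr W \;\simeq\; C_{p^m}^{\,N} \rtimes W \;\hookrightarrow\; \GL_{tN}(\k) \;\subseteq\; \GL_d(\k),
\]
the last inclusion obtained by padding trivially on the remaining $d - tN$ coordinates. Using the identity $\ent{\ent{d/t}/p^i} = \ent{d/(tp^i)}$ I then compute
\[
|H| \;=\; p^{\,mN + v_p(N!)} \;=\; p^{\,m\ent{d/t} + \sum_{i \geq 1} \ent{d/(tp^i)}} \;=\; p^{M_\k(d,p)},
\]
which settles the proposition for $p$ odd and also for $p=2$ in case~(a) of Proposition~\ref{ImageCaractereCyclotomique} (where $t=1$).

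In the remaining situation $p=2$, $t=2$ (cases~(b) and~(c) of Proposition~\ref{ImageCaractereCyclotomique}), I enlarge $C_{2^m}$ to the dihedral subgroup $D := C_{2^m} \rtimes C_2 \subseteq \GL_2(\k)$ of order $2^{m+1}$, where the $C_2$-factor is the nontrivial element of $\mathrm{Gal}(\k(z_4)/\k)$ acting $\k$-linearly on $F$. Wreathing $D$ with a Sylow $2$-subgroup of $S_{\ent{d/2}}$, and, when $d$ is odd, adjoining a factor $\langle -1 \rangle \subseteq \GL_1(\k)$ on the leftover coordinate, produces a $2$-subgroup of $\GL_d(\k)$ whose order, after a short parity-based bookkeeping, equals $2^{d + (m-1)\ent{d/2} + v_2(\ent{d/2}!)} = 2^{M_\k(d,2)}$. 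The only delicate point of the whole argument is precisely this final accounting for $p=2$: the extra contributions of the Galois involution on $\k(z_4)$ and of the leftover coordinate (when $d$ is odd) have to be collated exactly with the formula defining $M_\k(d,2)$. No new difficulty arises from the passage from number fields to finitely generated extensions of $\Q$, since $t$, $m$ and $F$ are defined identically.
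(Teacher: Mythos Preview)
Your proof is correct and follows essentially the same wreath-product construction as the paper: embed $C_{p^m}$ in $\GL_t(\k)$ via the regular representation of $\k(z_p)$, take $\lfloor d/t\rfloor$ copies permuted by the symmetric group, and for $p=2$, $t=2$ adjoin the Galois involution of $\k(z_4)/\k$ together with a $\{\pm 1\}$ on the leftover coordinate when $d$ is odd. The only (cosmetic) difference is that you take a Sylow $p$-subgroup $W\le S_N$ from the outset, so that your $C_{p^m}\wr W$ is already a $p$-group, whereas the paper builds $S_r\ltimes(\Z/p^m\Z)^r$ and tacitly passes to a $p$-Sylow; your formulation is slightly cleaner in this respect.
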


   \begin{proof} Set $t= t(\k; p), m = m(\k;p)$ and $r = \lfloor d/t \rfloor$.

     \paragraph{The case $p \geq 3$.--} Let $\rho = z_{p^m} \in
         \k(z_p)$. Then, the group $(\Z / p^m \Z)$ acts on $\k(z_p)$ via multiplication by $\rho^k$ for all $k \in \Z / p^m \Z$.
         Now take $r$ copies of $\k(z_p)$; this is a $\k$-vector space $V$ of dimension $t \cdot r \leq d$ and let $S_r$ be the
         $r$-th symmetric group, $S_r$ acts on $V$ by permuting the $r$ copies of $\k(z_p)$ and therefore the group
         \[ G := S_r \ltimes (\Z / p^m \Z)^r \]
         acts faithfully by linear automorphisms on $V$ and has the desired size. Indeed, $v_p (\vert G \vert) =
         m \cdot \left\lfloor \frac{d}{t} \right\rfloor + v_p (\lfloor \frac{d}{t} \rfloor !)$.

     \paragraph{The case $p=2$ and $t=1$.--} In that case, $\k = \k (z_4)$, then $M_\k(d,2) = m \cdot \left\lfloor
       \frac{d}{t} \right\rfloor + v_2 ( \left\lfloor \frac{d}{t} \right\rfloor !)$. Therefore, the proof above works
       as well, with $\rho = z_{2^m}$ acting on $\k(z_4) = \k$.

       \paragraph{The case $p=2$ and $t=2$.--} The construction above
         yields that $(\Z / 2^m \Z)$ acts linearly on $\k(z_4)$. We twist this action by the Galois
         automorphism $\sigma$ that sends $z_4$ to $-z_4$; $\sigma$ is an involution that sends $\rho = z_{2^m}$ to another
         primitive $2^m$-th root of unity. So we get that the
         group $H:= \Z / 2 \Z \ltimes \Z / 2^m \Z$ acts faithfully on $\k(z_4)$. Now set $ r = \lfloor d/2
         \rfloor$, we have that $G := S_r \ltimes H$ acts faithfully and linearly on a $\k$ vector space
         $V$ consisting of $r$ copies of $\k(z_4)$. The vector space $V$ has dimension $2 \cdot \lfloor d/2
         \rfloor \leq d$. Now, we have
         \[ v_2 (\vert G \vert) = (m+1) \cdot \lfloor d/2 \rfloor + v_2 (\lfloor d/2 \rfloor !). \]
         If $d$ is even this is equal to $M_\k (d,2)$ and we are done. If $d$ is odd then $v_2 (\abs G) = M_\k (d,2) -1$ but
         then $V$ is of dimension $d-1$ so the group $G \times \{ \pm 1 \}$ acts faithfully on $V \oplus \k$
         that is of dimension $d$ and this group has the desired size.

   \end{proof}

   We can therefore state:

   \begin{bigtheorem}\label{BigThMSchurBoundPolynomialGeneralCase}
     Let $\k$ be a finitely generated field over $\Q$, let $d$ be a natural number, and let $p$ be a
     prime. Let $G$ be a finite $p$-subgroup of $\Aut( \A^d_\k)$, then
  \begin{enumerate}
    \item If $p \geq 3$ or $p=2$ and $\k$ is in case (a) or (b), there exists a group embedding
      $ G \hookrightarrow \GL_d (\k)$   and $ v_p (\vert G \vert) \leq M_\k (d;p)$.

    \item If $p=2$ and $\k$ is in case (c), there exists a group embedding
      $ G \hookrightarrow \GL_d (\k(z_4))$ and $v_2(\vert G \vert) \leq M_\k(d, 2) + \lfloor \frac{d}{2} \rfloor $.
  \end{enumerate}
\end{bigtheorem}

\begin{rmq}\label{remarkCasPairNeMarchPas}
  We get the optimal bounds except when $p=2$ and $\k$ is in case (c) (this includes $\k = \Q$). For that case,
  following Remark \ref{remarkCasPairLinear}, to get the optimal bound one would need a result of the following type:
  \emph{
    Let $\k$ be a number field in case (c) and $G$ a finite subgroup of $\Aut(\A_\k^d)$ of order $2^\alpha$, then for $\m$ in the complement of
  a finite set of $\Specmax \mathcal O_\k$ the group $G$ embeds into an orthogonal group over $\mathcal O_\k / \m$.}

  We know that for any maximal ideal $\m$ lying over a large
  enough prime, there exists an embedding $G \hookrightarrow \GL_d(\F)$ and a fixed point $\bar x \in (\F)^d$ of
  $G$ where $\F = \mathcal O_\k / \m$. The problem is to find a symmetric matrix $A$ such that
  \[
    A_G := \sum_{g \in G} {}^t \! D_{\bar x} g \cdot A \cdot D_{\bar x} g
  \]
  is non-degenerate. Such an $A$ does not exist for every subgroup of $\GL_d (\F)$ precisely because $v_2
  (\abs{\GL_d(\F})$ is larger than the 2-adic valuation of the order of any orthogonal group over $\F$. So we have to
    use that $G$ comes from a group over $\k$ and adapt $\m$ wisely.

    Here is one way to attack this problem. Pick a fixed point $\overline x$ of $G$ with coordinates in $\overline \Q$;
    such a point exist because otherwise let $(P_n)$ be the system of polynomial equations stating that $G$ has a fixed
    point. If this system has no solution over $\overline \Q$ then by Hilbert's Nullstellensatz, there is a relation of
    the form $1 = \sum Q_i P_i$ for some polynomials $Q_i$. Now take a number field $\k'$ where this relation is defined.
    By the previous paragraph we can reduce modulo a large enough maximal ideal $\m$ of $\mathcal O_{\k'}$ (i.e
    lying over a large enough prime) and this would yield an
    injective group homomorphism $G \hookrightarrow \Aut (\A^d)$ where $F$ is a finite field with $\Char F \neq p$. The relation
    $1 = \sum Q_i P_i $ still holds in $\F$ but this is absurd since we know that $G$ admits a fixed point over $\F$.
     Let $\k '$ be the number field generated by the coordinates of $\overline x$
    and $\k$. We would like to find $A$ such that $A_G$ is non-degenerate. If $\k ' \subset \R$ we can use argument of
    positive definiteness to do so, but otherwise a first difficulty occurs. Now, even if such an $A$ could be found,
    the arithmetic of $\k '$ leads to another difficulty: For any maximal ideal $\m ' \subset \mathcal O_{\k '}$ lying over a
    large enough maximal ideal $\m \subset \mathcal O_\k$, the image $x'$ of $\overline x$ in $\F ' = \mathcal O_{\k'} / \m'$ is a fixed
    point of $G$, and the reduction modulo $\m'$ of $A_G$ is an invertible symmetric matrix over $\F'$. But if the
    degree $[ \F', \F]$ is even, then the 2-adic valuation of any orthogonal group over $\F '$ will be too large to get
    the optimal bound.
\end{rmq}

\section{$p$-adic analysis}\label{SecPAdicAnalysis}
To prove Theorem \ref{BoundNilpotentGroups}, we will show that any
finitely generated nilpotent group acting on a complex quasiprojective variety of dimension~$d$~can be embedded in a
finite dimensional~$p$-adic Lie group acting analytically on a~$p$-adic manifold of dimension~$d$. The
theorem will follow from a version of Theorem 1.1 of \cite{epstein1979transformation} in a~$p$-adic
context. In this section, we introduce all the tools from~$p$-adic analysis and~$p$-adic Lie groups needed
for the proof.

\subsection{Tate-Analytic Diffeomorphisms}\label{SecAnalyticDiffeo}

\subsubsection{Definitions and topology}

Let~$p$~be a prime. We denote by~$\Z_p$~the completed ring of~$\Z$~with respect to the~$p$-adic norm defined such
that~$\abs p = 1/p$. Denote by~$\Q_p$~the completion of~$\Q$~with respect to this norm. Then~$\Q_p =
\Frac(\Z_p)$~and~$\Z_p$~is the set of elements of~$\Q_p$~of absolute value~$\leq 1$. We extend this norm
to~$\Q_p^d$~by taking the maximum of the absolute values of the coordinates. We will use explicitly the ring~$\Z_p$~and
the field~$\Q_p$~but what follows can be done with any complete valued ring or field of characteristic~$0$. The right
setup would be to consider~$\C_p$~the completion of the algebraic closure of~$\Q_p$~and~$\D_p$~the unit ball of
$\C_p$.

For
reference, check \cite{cantat2014algebraic}.
We denote by~$B(x,r) = \left\{ y \in \Q_p^d : \norm{x-y} \leq r \right\}$~the closed ball of radius~$r$~and center
$x$. It is both open and closed. Such sets will be called \emph{clopen}.

\paragraph{Tate analytic maps.--}Classically, a function~$\Z_p^d
\rightarrow \Q_p$~is analytic if it can be written locally as a converging power series, we work with
\emph{Tate-analytic} functions which are converging power series of radius~$\geq 1$~over~$\Z_p^d$.

Take~$\Z_p^d$~with its standard coordinates~$\x = x_1, \cdots, x_d$.
On~$\Q_p[x_1,\cdots,x_d] =: \Q_p [\x]$~the Gauss norm is defined by
\[ \forall g \in \Q_p[\x], \quad g = \sum_{I \subset \Z_+^d} a_I
\x^I, \quad \norm g := \max_{I} \abs{a_I} \]
where $I = (I_1, \cdots, I_d)$ and $\x^I := x_1^{I_1}\cdots x_d^{I_d}$;
we denote by~$\Q_p \langle x_1,\cdots, x_d \rangle =: \Q_p \langle \x
\rangle$~the completion of $\Q_p [x_1,\cdots,x_d]$ with respect to the Gauss norm~$\Q_p \langle \x \rangle$~is the set of
formal power series with coefficients in~$\Q_p$~such that~$a_I \rightarrow 0$~when~$I \rightarrow \infty$~(i.e when
$\max(I) \rightarrow \infty$). It is also the set
of formal power series with coefficients in~$\Q_p$~converging over~$\Z_p^d$. This shows that~$\Q_p \langle \x \rangle$
equipped with the Gauss norm is an infinite-dimensional Banach space over~$\Q_p$. For all polynomials~$f,g \in
\Q_p [\x]$, then~$\norm{f \cdot g} \leq \norm f \cdot \norm g$~and this is also true in~$\Q_p \langle \x \rangle$,
therefore~$\Q_p \langle \x \rangle$~is a Banach algebra over~$\Q_p$, it is the \emph{Tate algebra} over~$\Q_p$~in~$d$~variables
(see \cite{robert2013course}). We also define~$\Z_p \langle
\x \rangle$~which is the completion of~$\Z_p [\x]$~for the gauss norm; it is in fact the set of elements of~$\Q_p
\langle \x \rangle$~of norm~$\leq 1$.

\begin{rmq}\label{remarkCoeffEntierAMultiplicationPres}
  For each~$f \in \Q_p \langle \x \rangle$~there exists an
  element~$s \in \Z_p$~such that~$s \cdot f \in \Z_p \langle \x \rangle$~and if~$g \in \Q_p \langle \x \rangle$~is such
  that~$g(0) \in \Z_p$, then there exist an integer~$N>0$~such that~$g (p^N \x) \in \Z_p \langle \x \rangle$. Moreover,
  if~$g \in \Q_p [ [ \x ] ]$~is a formal power series with coefficients in~$\Q_p$~with a strictly positive
  radius of convergence, then there exists an integer~$N$~such that~$g (p^N \x)$~belongs to~$\Q_p \langle \x \rangle$.
\end{rmq}

\begin{rmq}\label{remarkPourquoiCoeffEntiers}
  There exist Tate-analytic maps with non-integer coefficients such that~$f(\Z_p^d) \subset \Z_p$. For example, take
  \[ f(x) = \frac{x^p - x}{p}. \]
  Since for all~$x \in \Z_p, x^p \equiv x \mod p$,~$f$~induces a map~$f: \Z_p \rightarrow \Z_p$. However
every element~$f \in \Q_p \langle \x \rangle^d$~induces a map~$f: \D_p^d \rightarrow \C_p$~and we have~$f(\D_p^d) \subset
\D_p \Leftrightarrow f \in \Z_p \langle \x \rangle^d$. This has to do with the residue field of~$\Z_p$~being finite but
not the residue field of~$\D_p$~(see \cite{robert2013course}, Proposition of page 240).
\end{rmq}

For any~$m \geq 0$, elements of~$\Q_p \langle \x \rangle^m$~are called \emph{Tate-analytic functions}.
If~$g \in \Q_p \langle \x \rangle^d$, then
\begin{equation}
  \forall x, y \in \Z_p^d, \norm{g(x) - g(y)} \leq \norm g \norm{x-y}.
  \label{EqLipschitz}
\end{equation}
In particular,~$g$~is~$\norm g$-Lipschitz.

\begin{prop}[Strassman's Theorem, see \cite{robert2013course}, chapter 6, section 2.1] \label{PropIsolatedZeroPrinciple}
  Let~$f \in \Q_p \langle t \rangle$~be a Tate-analytic function in one variable, if~$f$~is
  not the zero function, then~$f$~has a finite number of zeros over~$\Z_p$.
\end{prop}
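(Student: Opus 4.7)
The plan is to prove the sharper statement that if $f(t) = \sum_{n \geq 0} a_n t^n \in \Q_p\langle t \rangle$ is nonzero, then $f$ has at most $N$ zeros on $\Z_p$, where
\[
N := \max\bigl\{ n \geq 0 : |a_n| = \norm{f} \bigr\}.
\]
This $N$ is well defined because $a_n \to 0$ and $\norm f > 0$, so only finitely many indices can achieve the norm. After rescaling by a suitable $s \in \Q_p^\times$, I may assume $\norm f = 1$, so that $f \in \Z_p\langle t \rangle$ and $|a_n| \leq 1$ for every $n$.

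I would argue by induction on $N$. For the base case $N = 0$ one has $|a_0| = 1$ while $|a_n| < 1$ for every $n \geq 1$, so for each $t \in \Z_p$ the ultrametric inequality gives
\[
\bigl| f(t) - a_0 \bigr| \leq \sup_{n \geq 1} |a_n|\,|t|^n < 1 = |a_0|,
\]
whence $|f(t)| = 1$, and $f$ has no zero on $\Z_p$.

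For the induction step, assume $N \geq 1$ and suppose $\alpha \in \Z_p$ is a zero of $f$. Using $f(t) = f(t) - f(\alpha)$ together with the algebraic identity $t^n - \alpha^n = (t-\alpha)\sum_{k=0}^{n-1} t^k \alpha^{n-1-k}$ and rearranging the resulting double sum, I would write
\[
f(t) = (t - \alpha)\, g(t), \qquad g(t) = \sum_{k \geq 0} b_k t^k, \qquad b_k := \sum_{n > k} a_n \, \alpha^{n-1-k}.
\]
Since $|\alpha| \leq 1$ and $a_n \to 0$, each series defining $b_k$ converges in $\Z_p$ and the sequence $(b_k)$ tends to $0$, so $g \in \Z_p \langle t \rangle$. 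For $k \geq N$ one has $|b_k| \leq \sup_{n > k} |a_n| < 1$, while the sum $b_{N-1} = a_N + \sum_{n > N} a_n \alpha^{n-N}$ has leading term $a_N$ of norm $1$ and all further terms of norm $< 1$, so $|b_{N-1}| = 1$ by the ultrametric inequality. Thus $\norm g = 1$ and the integer attached to $g$ equals exactly $N-1$; by induction, $g$ has at most $N-1$ zeros on $\Z_p$, and therefore $f$ has at most $N$.

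\textbf{Main obstacle.} The only delicate point is the factorization step: one must justify the rearrangement producing $g \in \Z_p\langle t \rangle$, and verify that $|b_{N-1}|$ is \emph{exactly} $1$ rather than merely $\leq 1$, so that the inductive parameter drops strictly. Both facts reduce to the observation that, since $a_n \to 0$, only finitely many indices satisfy $|a_n| = \norm f$, combined with the strong triangle inequality; once these are in hand, the induction runs mechanically.
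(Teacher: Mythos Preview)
Your proof is correct and is essentially the classical argument for Strassman's theorem: normalize so that $\norm f = 1$, define $N$ as the largest index with $|a_N| = 1$, and induct on $N$ by factoring out a linear term $(t-\alpha)$ whenever a zero $\alpha$ exists, checking via the ultrametric inequality that the quotient $g$ again has $\norm g = 1$ with its corresponding index equal to $N-1$. The only point worth making explicit is the final counting step: once $f = (t-\alpha)g$ and $g$ has at most $N-1$ zeros, any zero of $f$ distinct from $\alpha$ is a zero of $g$ (since $\Z_p$ is an integral domain), so $f$ has at most $N$ distinct zeros.

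As for comparison with the paper: the paper gives no proof of this proposition at all; it simply cites \cite{robert2013course}, chapter~6, section~2.1. Your argument is precisely the one found there, so you have supplied what the paper chose to outsource.
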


\begin{cor}\label{AnalyticContinuation}
  Let~$f \in \Q_p \langle \x \rangle$, if there exists a non-empty open subset
 ~$\mathcal U \subset \Z_p^d$~such that~$f_{| \mathcal U} \equiv 0$~then~$f$~is the zero function.
\end{cor}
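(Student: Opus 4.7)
The plan is to reduce to Strassman's Theorem (Proposition \ref{PropIsolatedZeroPrinciple}) by induction on the number of variables $d$.

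First I would reduce the hypothesis to a more convenient form. Since the topology on $\Z_p^d$ has a basis consisting of products of closed balls, any non-empty open subset $\mathcal U$ contains a polydisc $B(a_1, r_1) \times \cdots \times B(a_d, r_d)$ with each $r_i > 0$. After translation (which preserves the Tate-analytic class) I may assume $a_i = 0$, and after rescaling by $x_i \mapsto p^{N_i} x_i$ for $N_i$ large enough (which again preserves Tate-analyticity by Remark \ref{remarkCoeffEntierAMultiplicationPres}), I may assume the polydisc is all of $\Z_p^d$; equivalently, it suffices to prove: if $f \in \Q_p\langle \x \rangle$ vanishes on $\Z_p^d$ then $f = 0$. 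Actually it is cleaner to keep a polydisc $B_1 \times \cdots \times B_d$ contained in $\mathcal U$ with $B_i = B(a_i, r_i)$ and run the induction directly.

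For the base case $d=1$, Strassman's Theorem (Proposition \ref{PropIsolatedZeroPrinciple}) applies: a non-zero element of $\Q_p\langle t\rangle$ has only finitely many zeros in $\Z_p$, so if $f$ vanishes on an infinite set such as a ball of positive radius, then $f \equiv 0$.

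For the inductive step, suppose the result holds in dimension $d-1$. Write
\[
  f(x_1, \dots, x_d) = \sum_{k \geq 0} f_k(x_1, \dots, x_{d-1}) \, x_d^k,
\]
where $f_k \in \Q_p\langle x_1, \dots, x_{d-1}\rangle$; the coefficients $f_k$ are obtained by grouping monomials of $f$ according to the power of $x_d$, and the Gauss norm bound on $f$ forces $\|f_k\| \leq \|f\|$, so each $f_k$ is indeed Tate-analytic in $d-1$ variables. Fix any point $(b_1, \dots, b_{d-1}) \in B_1 \times \cdots \times B_{d-1}$. The specialization $g(x_d) := f(b_1, \dots, b_{d-1}, x_d)$ is Tate-analytic in one variable and vanishes on the ball $B_d \subset \Z_p$, so by the one-variable case $g \equiv 0$, which means $f_k(b_1, \dots, b_{d-1}) = 0$ for every $k \geq 0$. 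Thus each $f_k$ vanishes on the non-empty open set $B_1 \times \cdots \times B_{d-1}$, and by the induction hypothesis $f_k = 0$ for all $k$, whence $f = 0$.

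The main (minor) obstacle is the bookkeeping in passing from a general non-empty open $\mathcal U$ to a polydisc and checking that the coefficients $f_k$ extracted from $f$ are genuinely Tate-analytic; both are routine thanks to the ultrametric basis of clopen balls and the multiplicative behaviour of the Gauss norm. No new idea beyond Strassman and a one-variable fibering argument is needed.
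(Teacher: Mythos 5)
Your proof is correct, but it takes a genuinely different route from the paper's. The paper's argument is a one-step ``restriction to a line'': pick $y \in \mathcal U$ and an arbitrary $x \in \Z_p^d$, and set $\varphi(t) = f(tx + (1-t)y)$. Since $t, 1-t \in \Z_p$ and $x, y \in \Z_p^d$, this composition lands in $\Q_p\langle t\rangle$ automatically; $\varphi$ vanishes for small $t$ (the line stays in $\mathcal U$ near $t=0$), so Strassman gives $\varphi \equiv 0$, and in particular $f(x) = \varphi(1) = 0$. This avoids any induction, any reduction to polydiscs, and any decomposition of $f$ into coefficient functions. Your approach is the more textbook-standard ``slice along coordinate axes and induct on $d$,'' which is perfectly valid but requires the extra bookkeeping that you flag: reducing $\mathcal U$ to a polydisc, extracting $f_k \in \Q_p\langle x_1,\dots,x_{d-1}\rangle$, and checking that the specialization $g(x_d) = f(b_1,\dots,b_{d-1},x_d)$ is again Tate-analytic. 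On that last point your write-up is slightly loose: the inequality $\|f_k\| \leq \|f\|$ bounds each $f_k$ but is not what makes $f_k$ Tate-analytic — that follows because the coefficients of $f_k$ are a subfamily of those of $f$ — and for $g$ to lie in $\Q_p\langle x_d\rangle$ you additionally need $\|f_k\| \to 0$ as $k \to \infty$, which holds because $a_I \to 0$ as $\max(I) \to \infty$ but should be said. These are routine fixes, not gaps in the idea. The paper's line-restriction argument buys brevity and a slicker reach to arbitrary $x$; your slicing argument buys a more mechanical structure and perhaps a clearer picture of what Strassman is doing on each fibre.
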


\begin{rmq}
  This is not true for analytic functions over~$\Z_p^d$. For example define~$g$~by~$g(y) = 1$~if~$\norm y \leq
  \abs p$~and~$g(y) = 0$~otherwise. Then,~$g$~is analytic at every point of~$\Z_p^d$~because it is locally constant, it
  vanishes on the open subset~$\left\{ x \in \Z_p^d : \norm x = 1 \right\}$~but~$g$~is not the zero function.
\end{rmq}

\begin{proof}[Proof of Corollary \ref{AnalyticContinuation}]
  Take~$y \in \mathcal U$~and~$x \in \Z_p^d$. Let~$\varphi$~be the
  function~$\varphi: t \in \Z_p \mapsto f(tx +
  (1-t)y)$. Then~$\varphi$~belongs to~$\Q_p \langle t \rangle$~and it vanishes for any sufficiently small~$t$. By
  Proposition \ref{PropIsolatedZeroPrinciple}, we have that~$\varphi$~is the zero function, therefore~$f(x) = 0$.
\end{proof}

Let~$f,g \in \Q_p \langle \x \rangle$~and~$c>0$, we write~$f \equiv g \mod p^c$~if $\norm{f -g} \leq \abs p^c$ and we
extend such notation componentwise for~$\Q_p \langle \x \rangle^m$~for every~$m \geq 1$.

\begin{ex} \label{ExampleCongruence}
  If~$c=1$~and~$f,g \in \Z_p \langle \x \rangle$, then~$f = \sum_I a_I \x^I
  \equiv \id(\x) \mod p$~means that~$\overline f := \sum_{I} \overline{a_I} \x^I
  = \id (\x)$~where~$\overline{a_I} = a_I \mod p$~is the reduction of~$a_i$~mod~$p \Z_p$.
\end{ex}

\paragraph{Tate analytic diffeomorphisms.--}

The composition determines a natural map
\[
  \begin{array}{cclll}
\Z_p \langle X_1,\cdots,X_n \rangle^m & \times & \Z_p \langle Y_1,\cdots, Y_s \rangle^n & \longrightarrow &
    \Z_p \langle Y_1,\cdots, Y_s \rangle^m \\
    (g_1,\cdots.,g_m) & &(h_1,\cdots,h_n)  & \longmapsto & (g_1(h_1,\cdots,h_n),\cdots, g_m(h_1,\cdots,h_n))
\end{array}
\]
If the three integers~$n,m,s$~are equal to the same integer~$d$,~$(\Z_p \langle \x \rangle^d, \circ)$~becomes a semigroup.  The
invertible elements of this semigroup are called \emph{Tate-analytic diffeomorphisms} and form a group denoted by
$\Diff^{an} (\Z_p^d)$. Using Equation \eqref{EqLipschitz}, we have that~$\Diff^{an}(\Z_p^d)$~acts by isometries on
$\Z_p^d$.

\begin{rmq}
  Following Remark \ref{remarkPourquoiCoeffEntiers}, we see that~$\Diff^{an} (\Z_p^d)$~consists exactly of the elements of
 ~$f \in \Q_p \langle \x \rangle$~that induces a Tate-analytic diffeomorphisms~$f: \D_p^d \rightarrow \D_p^d$.
\end{rmq}

The next proposition shows an easy way to construct Tate-analytic
diffeomorphisms of small polydisks.

\begin{prop}[Local inversion theorem, see \cite{SerreLieGroupsLieAlgebras}]\label{ExistenceInverse}
  Let~$\Phi \in \Z_p [[X_1,\cdots.,X_d]]^d$~be a
  power series with a strictly positive radius of convergence. Suppose that~$\Phi(0) = 0$~and~$\det (D_0 \Phi) \neq
  0$, then there exists a unique~$\Psi \in \Q_p [[X_1,\cdots.,X_d]]^d$, with a strictly positive radius of convergence, such
  that~$\Psi(0) = 0$~and \[ \Phi \circ \Psi (\x) = \Psi \circ \Phi(\x) = \x. \]

  Furthermore,~$\norm{\Psi_n} \leq \max (1, \norm{ \inv {D_0 \Phi}}^n)$, where~$\Psi_n \in \Q_p
  [X_1,\cdots,X_n]^d$~is the homogeneous part of degree~$n$~of~$\Psi$~and~$\abs{\abs{\cdot}}$~is the Gauss norm over
  polynomials. Therefore, if~$\Phi$~belongs to~$\Z_p \langle \x \rangle^d$, then for any~$k$~such that~$~\abs p^k <
  \norm{D_0 \inv \Phi}$, we have that~$\frac{1}{p^k}  \Phi  (p^k \x)$~and~$\frac{1}{p^k}  \Psi  (p^k
  \x)$~are Tate-analytic diffeomorphisms and are inverse of each other.
\end{prop}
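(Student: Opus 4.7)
The plan is to construct $\Psi$ formally degree-by-degree as a solution of $\Phi \circ \Psi = \id$, bound its homogeneous components in Gauss norm by induction, and then deduce convergence together with the Tate-analytic rescaling. First I would decompose $\Phi = A\x + R(\x)$ with $A = D_0\Phi \in \GL_d(\Q_p)$ and $R$ a series all of whose terms have degree $\geq 2$. Because $\Phi \in \Z_p[[\x]]^d$, we have $A \in M_d(\Z_p)$ and $\|\Phi_k\| \leq 1$ for every homogeneous component $\Phi_k$ of $R$ with $k\geq 2$. Setting $B = A^{-1}$, the equation $\Phi(\Psi) = \x$ becomes the fixed point equation
\[
\Psi \;=\; B\x \;-\; B\cdot R(\Psi).
\]

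Writing $\Psi = \sum_{n\geq 1} \Psi_n$ with each $\Psi_n$ homogeneous of degree $n$, the degree-$n$ part of $R(\Psi)$ depends only on $\Psi_1,\dots,\Psi_{n-1}$. This yields the recursion $\Psi_1 = B\x$ and, for $n\geq 2$,
\[
\Psi_n \;=\; -B\sum_{k=2}^n \bigl[\Phi_k(\Psi_1+\cdots+\Psi_{n-1})\bigr]_n,
\]
which gives both existence and uniqueness of the formal inverse. To bound $c_n := \|\Psi_n\|$, I would use ultrametric multiplicativity of the Gauss norm: the degree-$n$ part of $\Phi_k(\Psi)$ is a sum of products $\Psi_{i_1}\cdots\Psi_{i_k}$ with $i_1+\cdots+i_k = n$ and each $i_j\geq 1$, hence has norm at most $\|\Phi_k\|\cdot\max\prod c_{i_j} \leq \max\prod c_{i_j}$. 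An induction on $n$, splitting on whether $\beta := \|B\|\leq 1$ or $\beta > 1$, then yields the stated bound $c_n \leq \max(1,\beta^n)$: if $\beta\leq 1$ every $c_j\leq 1$ forces the product to be $\leq 1$, and if $\beta > 1$ the product telescopes to $\beta^n$ by the additivity of exponents.

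Given this estimate, $\Psi$ converges on a small polydisk around $0$ and satisfies $\Phi\circ\Psi = \id$. Two-sidedness follows by applying the same construction to $\Psi$ (whose linear part $B$ is invertible) to obtain a formal inverse $\Psi'$ on the right; associativity of composition gives $\Psi' = \Phi$, so $\Psi$ is a two-sided formal and analytic inverse. For the Tate-analytic conclusion, when $\Phi \in \Z_p\langle\x\rangle^d$, pick $k$ with $|p|^k < 1/\beta$. A direct Gauss-norm computation shows that the degree-$n$ term of $p^{-k}\Phi(p^k\x)$ is scaled by $|p|^{(n-1)k} \leq 1$, so it belongs to $\Z_p\langle\x\rangle^d$; the analogous rescaling of $\Psi$, combined with $c_n \leq \max(1,\beta^n)$, produces a term of norm $\leq |p|^{(n-1)k}\max(1,\beta^n)$, which tends to $0$ once $|p|^k\beta < 1$, so $p^{-k}\Psi(p^k\x)$ also lies in $\Z_p\langle\x\rangle^d$. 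The two rescaled maps are then mutually inverse Tate-analytic diffeomorphisms of $\Z_p^d$.

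The main obstacle is the Gauss-norm induction: the key point is that the ultrametric property allows a linear exponent $\beta^n$ in the bound (as opposed to an exponential base, which is what a naive Archimedean Cauchy estimate would give), and the tricky book-keeping is showing that the factor of $\beta = \|B\|$ introduced at each step of the recursion is absorbed rather than accumulated when one raises the inductive hypothesis to the power $k$ and splits the exponent $n$ into $k$ parts with $k\geq 2$. Once this bound is in hand, the rest is a direct rescaling argument and a formal two-sided inverse manipulation.
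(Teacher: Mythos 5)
Your strategy — decompose $\Phi = A\x + R$ with $B = A^{-1}$, solve the fixed-point equation $\Psi = B\x - B\,R(\Psi)$ degree by degree, bound the homogeneous parts in Gauss norm, and then rescale — is the standard and correct route, but the inductive estimate you sketch does not close. Since $\Phi\in\Z_p[[\x]]^d$ forces $\|A\|\leq 1$ and hence $\beta:=\|B\|\geq 1$, your $\beta\leq 1$ case is vacuous, and in the remaining case the recursion reads $c_n \leq \beta\cdot\max\prod_{j} c_{i_j}$ over compositions $i_1+\cdots+i_k=n$ with $k\geq 2$. With the hypothesis $c_j\leq\beta^j$, the product is bounded by $\beta^n$, and then the outer factor of $\beta$ coming from $B$ gives $c_n\leq\beta^{n+1}$, not $\beta^n$: the extra $\beta$ accumulates rather than being absorbed. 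The hypothesis that does close is $c_j\leq\beta^{2j-1}$, and this is exactly where $k\geq 2$ enters: the product is then at most $\beta^{\sum(2i_j-1)}=\beta^{2n-k}\leq\beta^{2n-2}$, so $c_n\leq\beta\cdot\beta^{2n-2}=\beta^{2n-1}$. The exponent $2n-1$ is sharp: for $\Phi(x)=px+x^2$ one has $\beta=p$ and $\Psi(x)=x/p - x^2/p^3 + 2x^3/p^5 - \cdots$, so $\|\Psi_n\|=\beta^{2n-1}$, which already exceeds $\beta^n$ at $n=2$. Consequently your rescaling condition $|p|^k<\beta^{-1}$ is also too weak: the degree-$n$ coefficient of $p^{-k}\Psi(p^k\x)$ is bounded by $|p|^{(n-1)k}\beta^{2n-1}=(|p|^k\beta^2)^{n-1}\beta$, which tends to $0$ only when $|p|^k\beta^2<1$. (The statement as printed in the paper also carries the wrong exponent and a vacuous condition on $k$; you reproduce those faithfully, but the induction you sketch does not actually establish even the printed bound.) Once the exponent is corrected to $2n-1$ and $k$ is chosen with $|p|^k<\beta^{-2}$, the remainder of your argument — uniqueness of the formal inverse, two-sidedness by running the construction on $\Psi$, and Tate-analyticity of both rescaled maps — goes through unchanged.
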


\paragraph{Group topology.--}
The following proposition shows that~$\Diff^{an} (\Z_p^d)$~is a topological group with respect to the topology
induced by the Gauss norm.

\begin{prop}\label{truc1}
  Let~$f,g,h \in \Z_p \langle \x \rangle^d$, then
  \begin{enumerate}
    \item~$\norm{g \circ f} \leq \norm g$.
    \item If~$f$~is an element of~$\Diff^{an} (\Z_p^d)$~then~$\norm{ g \circ f} = \norm g$.
    \item~$~\norm{ g \circ (\id +h) - g } \leq \norm h$.
    \item~$\norm{\inv f - \id} = \norm{f - \id}$~if~$f$~is a Tate-analytic diffeomorphism.
  \end{enumerate}
\end{prop}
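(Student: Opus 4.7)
The plan is to treat the four statements in the order given, since each builds on the previous one. For (1), I would write $g = \sum_I a_I \x^I$ and use the fact that the Gauss norm is submultiplicative (in fact multiplicative on the Tate algebra): since $f \in \Z_p\langle \x\rangle^d$, each component has Gauss norm $\leq 1$, so $\norm{f^I} \leq 1$, and therefore $\norm{g \circ f} \leq \max_I |a_I| \cdot \norm{f^I} \leq \max_I |a_I| = \norm g$.

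For (2), the point is just that if $f \in \Diff^{an}(\Z_p^d)$, then $\inv f$ also lies in $\Z_p\langle \x\rangle^d$ (hence has Gauss norm $\leq 1$), so applying (1) twice gives $\norm g = \norm{(g \circ f) \circ \inv f} \leq \norm{g \circ f} \leq \norm g$, forcing equality.

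Item (3) is the one requiring a small computation. I would expand
\[
g \circ (\id + h) - g = \sum_I a_I \bigl( (\x + h)^I - \x^I \bigr) = \sum_I a_I \sum_{\substack{0 \leq J \leq I \\ J \neq 0}} \binom{I}{J} \x^{I-J} h^J,
\]
using the multinomial expansion componentwise. The coefficients $\binom{I}{J}$ are integers, so they do not increase the Gauss norm, and every surviving term contains at least one factor of some $h_i$. Thus $\norm{\x^{I-J} h^J} \leq \norm{h}$ for $J \neq 0$, and the whole sum is bounded by $\max_I |a_I| \cdot \norm h = \norm g \cdot \norm h \leq \norm h$ since $\norm g \leq 1$.

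Finally, (4) follows formally from (3). Setting $h = f - \id$ and applying (3) to $\inv f$ in place of $g$, I get
\[
\norm{\inv f - \id} = \norm{\inv f \circ (\id + h) - \inv f} \leq \norm h = \norm{f - \id},
\]
because $\inv f \circ (\id + h) = \inv f \circ f = \id$. Swapping the roles of $f$ and $\inv f$ yields the reverse inequality, hence equality. I expect no serious obstacle here; the only subtlety is making sure the multinomial identity in (3) is written cleanly enough that the estimate on $\norm{\x^{I-J} h^J}$ is transparent, and that one keeps track of the fact that the ambient ring is $\Z_p\langle \x\rangle^d$ so that all relevant Gauss norms are $\leq 1$.
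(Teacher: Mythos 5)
Your proof is correct; the paper itself does not prove this proposition but instead cites Section~2.1 of \cite{cantat2014algebraic}, where essentially the same computation appears (Gauss-norm submultiplicativity for~(1), the inverse trick for~(2), the multinomial expansion bound for~(3), and deducing~(4) from~(3) by setting $h = f - \id$). Your write-up is clean and complete; the only point worth making explicit is that the rearranged double sum in~(3) converges in the Banach space $\Q_p\langle\x\rangle$ because each summand has norm at most $\abs{a_I}\cdot\norm h$ and $a_I \to 0$, but that is a minor bookkeeping remark.
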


\begin{lemme}\label{lemma:PuissanceCongruence}
  Let~$f$~be an element of~$\Diff^{an}(\Z_p^d)$, if~$f \equiv \id \mod p$~then~$f^{p^c} \equiv \id \mod p^c$.
\end{lemme}

\begin{cor}\label{corollary:SubgroupsBasisOfNeighbourhoods}
  Let~$c>0$~be a real number, then the
  subgroup~$\Diff^{an}_c(\Z_p^d)$~of~$\Diff^{an} (\Z_p^d)$~consisting of all
  elements~$f \in \Diff^{an} (\Z_p^d)$~such that~$f \equiv \id \mod p^c$~is a normal subgroup of
  ~$\Diff^{an}(\Z_p^d)$.
\end{cor}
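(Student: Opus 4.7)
The plan is to read the subgroup and normality properties directly off Proposition \ref{truc1}, exploiting the ultrametric nature of the Gauss norm. Unpacking the congruence notation, $\Diff^{an}_c(\Z_p^d)$ is the set of $f \in \Diff^{an}(\Z_p^d)$ with $\norm{f - \id} \leq \abs p^c$.

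For closure under composition, given $f, g \in \Diff^{an}_c(\Z_p^d)$ I would write $f = \id + h$ with $\norm{h} \leq \abs p^c$ and apply item (3) of Proposition \ref{truc1}:
\[ \norm{g \circ f - g} = \norm{g \circ (\id + h) - g} \leq \norm{h} \leq \abs p^c. \]
Combined with $\norm{g - \id} \leq \abs p^c$, the ultrametric inequality then yields $\norm{g \circ f - \id} \leq \abs p^c$. Closure under inversion is immediate from item (4): $\norm{\inv f - \id} = \norm{f - \id} \leq \abs p^c$. Hence $\Diff^{an}_c(\Z_p^d)$ is a subgroup.

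For normality, I would fix $f \in \Diff^{an}_c(\Z_p^d)$ and $g \in \Diff^{an}(\Z_p^d)$, set $k := g \circ f - g$, and note $\norm{k} \leq \abs p^c$ by the estimate just used. Since composition on the right distributes over addition of Tate-analytic maps,
\[ g \circ f \circ \inv g - \id = (g + k) \circ \inv g - \id = k \circ \inv g, \]
and item (2) of Proposition \ref{truc1} gives $\norm{k \circ \inv g} = \norm{k} \leq \abs p^c$ because $\inv g$ is a Tate-analytic diffeomorphism. Hence $g \circ f \circ \inv g \in \Diff^{an}_c(\Z_p^d)$, which establishes normality.

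No step is delicate: the corollary is a mechanical application of the four estimates collected in Proposition \ref{truc1}, and morally records that the Gauss norm turns $\Diff^{an}(\Z_p^d)$ into a topological group with the subgroups $\Diff^{an}_c(\Z_p^d)$ as a basis of open normal neighborhoods of the identity. The only thing I would want to double-check while writing out the details is that the identity $(g+k) \circ \inv g = g \circ \inv g + k \circ \inv g$ really holds in $\Z_p \langle \x \rangle^d$, but this is immediate from expanding the power series coefficient-by-coefficient.
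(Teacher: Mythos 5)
Your proof is correct. The paper does not reproduce a proof of this corollary (it delegates it, together with Proposition~\ref{truc1} and Lemma~\ref{lemma:PuissanceCongruence}, to section 2.1 of \cite{cantat2014algebraic}), and your argument is exactly the routine one extracting the subgroup and normality properties from Proposition~\ref{truc1}: items~(3) and~(4) plus the ultrametric inequality give closure under composition and inversion, while the decomposition $g\circ f\circ \inv g - \id = (g\circ f - g)\circ \inv g$ combined with items~(2) and~(3) gives normality, and the right-distributivity you flagged holds since $(g+k)(\y) = g(\y)+k(\y)$ for every $\y$, in particular $\y=\inv g(\x)$.
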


Proposition \ref{truc1}, Lemma \ref{lemma:PuissanceCongruence} and Corollary \ref{corollary:SubgroupsBasisOfNeighbourhoods} are
proven in \cite{cantat2014algebraic}, section 2.1.

\subsubsection{Analytic flow and Bell-Poonen theorem}

\paragraph{Flows and vector fields.--}
As in real or complex geometry, we define vector fields and flows. Let~$d$~be an integer:

   A \emph{Tate-analytic vector field}~$\X$~over~$\Z_p^d$~is a vector field of the form
    \[ \X(\x) = \sum_{i=1}^d u_i (\x) \partial_i \]
    where each~$u_i$~belongs to~$\Q_p \langle \x \rangle$. The Lie bracket of two vector
    fields~$\X$~and~$\mathbf Y= \sum_{i=1}^d v_i \partial_i$~is the vector field defined by
    \[ [ \X, \mathbf Y] =
      \sum_{j=1}^d w_j (\x) \partial_j \text{ with } w_j = \sum_{i=1}^d \left(u_i \frac{\partial v_j}{\partial x_i} - v_i
      \frac{\partial u_j}{\partial x_i}\right).
    \]
    The~$\Q_p$-Lie algebra of Tate-analytic vector fields over~$\Z_p^d$~is denoted by~$\Theta(\Z_p^d)$~it is a strict
    subalgebra of the Lie Algebra of analytic vector fields over~$\Z_p^d$. The Gauss norm of a Tate-analytic vector field
   ~$\X = \sum u_i (\x) \partial_i$~is defined as~$\norm \X = \max_i \norm {u_i}$~and makes~$\Theta(\Z_p^d)$~a complete Lie
    Algebra over~$\Q_p$~isomorphic as a Banach space to~$\Q_p \langle \x \rangle^d$.

 A \emph{Tate-analytic flow}~$\Phi$~over~$\Z_p^d$~is an element of~$\Z_p \langle X_1, \cdots,
    X_d, t \rangle^d = \Z_p \langle \x,t \rangle^d$~which satisfies the following properties

    \begin{enumerate}[label=(\roman*)]
      \item~$~\forall \x \in \Z_p^d, \ \forall s,t \in \Z_p, \quad \Phi(\x, s+t) = \Phi( \Phi(\x,s), t).$
      \item~$\forall \x \in \Z_p^d,\quad \Phi(\x,0) = \id(\x)$.
    \end{enumerate}

Set~$\Phi_t := \Phi( \cdot, t) \in \Z_p \langle \x \rangle$. Then,~$\Phi_0 = \id$~and~$\Phi_t \in \Diff^{an}(\Z_p^d)$
since~$\inv{\Phi_t} = \Phi_{-t}$.  Then,~$t \in \Z_p \mapsto \Phi_t \in \Diff^{an}(\Z_p^d)$~is a continuous
homomorphism of topological groups with respect to the Gauss norm. The main point here is that flows are
parametrized by the compact group~$(\Z_p, +)$.

\begin{ex}
  If~$\Phi$~is a Tate-analytic flow, then we can define its associated Tate-analytic vector
  field~$\X_\Phi := \frac{\partial \Phi_t}{\partial t}_{|t=0}$. In particular,~$\X_\Phi$~is~$\Phi_t$-invariant, for all~$t
  \in \Z_p$.
\end{ex}

\paragraph{From vector fields to Tate-analytic flows.--}

Since a Tate-analytic vector field~$\X$~is analytic, it is a general fact that it admits local analytic flows over
$\Z_p^d$~(see \cite{bourbaki2007varietes} for example), the next proposition shows that if the norm
of~$\X$~is sufficiently small, then it
admits a global Tate-analytic flow.

\begin{prop}\label{PropExistenceGlobalTateAnalyticFlow}
  If~$\X$~is a Tate-analytic flow over~$\Z_p^d$, then for any sufficiently small~$\lambda \in \Z_p$, there
  exists a unique Tate-analytic flow~$\Phi^\lambda \in \Z_p \langle \x, t \rangle^d$~such that
  \[ \frac{\partial \Phi_t^\lambda (\x)}{\partial t} = \lambda \X(\Phi_t^\lambda(\x)). \]

  In particular, let $c>0$ be such that $c > \frac{1}{p-1}$, then every Tate-analytic vector
  fields~$\X$~such that~$\norm \X \leq \abs p^c$~admits a global Tate-analytic flow.
\end{prop}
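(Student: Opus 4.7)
The plan is to construct $\Phi$ as the formal Lie series in $t$ and then estimate its Gauss norm via Legendre's bound on $v_p(n!)$. The first assertion of the proposition will follow from the second by rescaling $\X$, so I focus on the case $\norm{\X} \leq \abs{p}^c$ with $c > \frac{1}{p-1}$.

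I would define
\[ \Phi_t(\x) := \sum_{n \geq 0} \frac{t^n}{n!}\,\X^n(\x), \qquad \X^n(\x) := \bigl(\X^n(x_1), \ldots, \X^n(x_d)\bigr), \]
where $\X^n(f)$ denotes the $n$-fold application of $\X$ as a $\Q_p$-linear derivation of $\Q_p\langle\x\rangle$. Since $\exp(t\X)$ is a formal ring homomorphism (the exponential of a derivation), for every $f \in \Q_p[[\x]]$ one has $f(\Phi_t(\x)) = \sum_n \frac{t^n}{n!}\,\X^n(f)(\x)$ in $\Q_p[[\x,t]]$; applied to the coordinate functions this gives $\partial_t \Phi_t(\x) = \sum_{n\geq 0} \frac{t^n}{n!}\,\X^{n+1}(\x) = \X(\Phi_t(\x))$, together with $\Phi_0 = \id$. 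Uniqueness of formal solutions of $\partial_t \Phi = \X \circ \Phi$, $\Phi_0 = \id$ is immediate by induction on the degree in $t$, and the cocycle identity $\Phi(\Phi(\x,s),t) = \Phi(\x,s+t)$ then follows by applying uniqueness to both sides as functions of $t$ at fixed $s$.

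For the analytic control, I would exploit that partial derivatives do not increase the Gauss norm on $\Q_p\langle\x\rangle$: for $f = \sum a_I \x^I$ one has $\partial_i f = \sum I_i\, a_I \x^{I - e_i}$ with $\abs{I_i a_I} \leq \abs{a_I}$, since $I_i \in \Z$. Hence $\norm{\X(f)} \leq \norm{\X}\cdot \norm{f}$, and by induction $\norm{\X^n(x_j)} \leq \norm{\X}^n$. Writing $\Phi_n$ for the coefficient of $t^n$ in $\Phi_t$, this yields $\norm{\Phi_n} \leq \norm{\X}^n / \abs{n!}$. Legendre's formula $v_p(n!) = \frac{n - s_p(n)}{p-1} \leq \frac{n-1}{p-1}$ for $n \geq 1$ then gives
\[ \norm{\Phi_n} \leq \abs{p}^{cn - (n-1)/(p-1)} = \abs{p}^{1/(p-1)} \cdot \abs{p}^{n(c - 1/(p-1))}. \]
Under the hypothesis $c > \frac{1}{p-1}$ this is $\leq 1$ for every $n \geq 0$ and tends to $0$ geometrically, so $\Phi$ lies in $\Z_p\langle\x,t\rangle^d$ and is a Tate-analytic flow solving the required ODE.

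The principal obstacle is precisely the factorial in the denominator, whose $p$-adic norm can grow as fast as $\abs{p}^{-n/(p-1)}$; the sharp bound $v_p(n!) \leq (n-1)/(p-1)$ is exactly what the hypothesis $c > \frac{1}{p-1}$ compensates. For the first assertion of the proposition, given an arbitrary Tate-analytic vector field $\X$, I would pick $\lambda \in \Z_p$ with $\abs{\lambda}\cdot \norm{\X} \leq \abs{p}^c$ for some $c > \frac{1}{p-1}$ and apply the preceding argument to $\lambda \X$, whose associated flow is the desired $\Phi^\lambda$.
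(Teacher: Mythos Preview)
Your proof is correct and follows essentially the same route as the paper. Both arguments produce the formal solution as $\Phi_t(\x)=\sum_{n\ge 0}\frac{t^n}{n!}\,\X^n(\x)$ and control the Gauss norm of the $t^n$-coefficient via the $p$-adic size of $1/n!$; the paper phrases the integrality of $\X^n(\x)$ as an induction obtained by differentiating the ODE after normalising $\norm{\X}\le 1$ and then rescales $t\mapsto\lambda t$, while you make the Lie series and Legendre bound $v_p(n!)\le (n-1)/(p-1)$ explicit and work directly under the hypothesis $\norm{\X}\le\abs{p}^c$. One cosmetic point: your displayed bound $\norm{\Phi_n}\le\abs{p}^{1/(p-1)}\abs{p}^{n(c-1/(p-1))}$ uses Legendre only for $n\ge 1$; for $n=0$ one simply has $\Phi_0=\id$ of norm $1$, so the conclusion $\Phi\in\Z_p\langle\x,t\rangle^d$ is unaffected.
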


\begin{proof}
  The strategy is to solve this differential equation in the space of power series~$\Q_p \left[ \left[ \x, t \right]
  \right]^d$~and then to show some properties on the radius of convergence of the solution. We first replace~$\X$~by
 ~$\mu \X$~for some~$\mu \in \Z_p$~such that~$\norm \X \leq 1$. Write~$\X (\x) = \sum_i u_i
  (\x) \partial_i$~with~$u_i \in \Z_p \langle \x \rangle$. We look at the differential equations

  \begin{equation}
    \frac{\partial}{\partial t } f_i (\x, t) = u_i(f(\x,t))
    \label{EqDiff}
  \end{equation}
  with~$f_i \in \Q_p \left[ \left[ \x, t \right] \right]$~and~$f = (f_1, \cdots, f_d)$~such that~$f(\x, 0) = \x$.
  Write
  \[
    f_i (\x ,t) = \sum_{k \geq 0 } a_k^{(i)} (\x) t^k , \quad a_k^{(i)} \in \Q_p \left[ \left[ \x \right] \right]
  \]
  then, the unique solution of this equation is formally given by the formulas~$a_k^{(i)} (\x) = \frac{1}{k !}
  \frac{\partial^k f_i}{\partial t^k} (\x, 0)$. We show that for all integer~$k \geq 0, \frac{\partial^k f_i}{\partial t^k}
  (\x, 0)$~belongs to~$\Z_p
  \langle \x \rangle$~by induction on~$k$. We get~$a_0^{(i)} = x_i$~since~$f(\x, 0) = \id(\x)$~and~$a_1^{(i)} (\x) = u_i(\x)$~by
  Equation \eqref{EqDiff}. Take~$k \geq 2$~and suppose the result to be true for all~$l < k$. By
  differentiating both sides of Equation \eqref{EqDiff}~$k-1$~times with respect to~$t$~and taking~$t=0$, we see that~$\frac{\partial^k
  f_i}{\partial t^k} (\x, 0)$~is obtained by sum and compositions of differentials of orders~$\leq k - 1$~of the
  Tate-analytic function~$u_i \in \Z_p \langle \x \rangle$~and the Tate-analytic functions~$\frac{\partial^l}{\partial
  t^l} f_i (\x, 0) \in \Z_p \langle \x \rangle$~with~$l < k$.
  So~$\frac{\partial^k f_i}{\partial t^k} (\x, 0)$~belongs to~$\Z_p \langle \x \rangle$~by induction.

  The solution~$f$~is then of the form
  \[ f(\x ,t) = \id(\x) + \sum_{k \geq 1} \frac{\partial^k f}{\partial t^k} (\x, 0) \frac{t^k}{k !}. \]

  Now take $\lambda \in \Z_p$, such that $\abs \lambda \leq \abs p ^c$. We have that for all $k \geq 0,
  \frac{\lambda^k}{k!} \in \Z_p$ and $\lambda^k / k!  \rightarrow 0$ in $\Z_p$ when $k \rightarrow \infty$. Then,
  $\Phi^\lambda_t := f( \cdot, \lambda t)$~is a Tate-analytic flow such that~$\frac{\partial
  \Phi_\lambda^t}{\partial t} (\x) = \lambda \X(\Phi_t^\lambda (\x))$.

  For the final statement, take~$\X$~a Tate-analytic vector field such that~$\norm \X \leq \abs p^c$
  and let $s \in \Z_p$ be such that $\abs s = \norm \X$, then $\Y := \frac{1}{s} \X$ has norm $\leq 1$.
  The proof shows that there exists a unique Tate-analytic flow~$\Phi$~such that~$\frac{\partial \Phi_t}{\partial t}_{|t=0} = s \Y = \X$.

\end{proof}

\begin{thm}[local linearisation of vector fields]\label{pAdicFrob}
  Let~$\X_1,\cdots,\X_k$~be Tate-analytic vector fields over ~$\Z_p^d$~such that~$[\X_i, \X_j] = 0$~for all~$1 \leq i,j \leq
  k$. Suppose that there exists a point~$m \in \Z_p^d$~such that the vectors~$\X_i (m)$~are linearly independent. Then,
  there exists a clopen subset~$\mathcal V \subset \Z_p^d$~containing~$m$~and an analytic
  diffeomorphism~$\varphi$~from~$\Z_p^d$~onto~$\mathcal V$~such that~$\varphi^* (X_{i|\mathcal V}) = \partial_i$~and such that
 ~$\varphi^*$~yields an injective Lie Algebra homomorphism~$\Theta(\Z_p^d)_{|\mathcal V}
  \hookrightarrow \Theta(\mathcal V)$.
\end{thm}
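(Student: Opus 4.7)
The plan is to adapt the classical constructive proof of the Frobenius rectification theorem for commuting vector fields to the Tate-analytic setting, using the global flows supplied by Proposition~\ref{PropExistenceGlobalTateAnalyticFlow} and the local inversion theorem Proposition~\ref{ExistenceInverse}.

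First I will translate $m$ to the origin and perform a $\GL_d(\Q_p)$ change of coordinates so that $\X_i(0) = e_i$ for $1 \leq i \leq k$, where $e_1,\ldots,e_d$ is the standard basis of $\Q_p^d$ obtained by completing $\X_1(0),\ldots,\X_k(0)$ to a $\Q_p$-basis. To obtain global flows I then rescale the vector fields by $\lambda = p^M$ with $M$ large enough that $\|\lambda \X_i\| \leq |p|^c$ for some $c > 1/(p-1)$; Proposition~\ref{PropExistenceGlobalTateAnalyticFlow} supplies Tate-analytic flows $\Phi_i \in \Z_p\langle \x,t\rangle^d$ with $\partial_t \Phi_i^t = \lambda \X_i \circ \Phi_i^t$. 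The commutation relations $[\X_i,\X_j] = 0$ force $\Phi_i^s \circ \Phi_j^t = \Phi_j^t \circ \Phi_i^s$: both sides satisfy the same Tate-analytic ODE $\partial_s F = \lambda \X_i \circ F$ with initial condition $F|_{s=0} = \Phi_j^t$ (the $\Phi_j^t$-invariance of $\X_i$ being a restatement of the vanishing of the bracket), and uniqueness of Tate-analytic solutions then gives equality.

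Next, I set
\[
\varphi(t_1, \ldots, t_d) \;=\; \Phi_1^{t_1} \circ \cdots \circ \Phi_k^{t_k}\bigl(0, \ldots, 0, t_{k+1}, \ldots, t_d\bigr),
\]
which lies in $\Z_p\langle t \rangle^d$ since composition of Tate-analytic maps is Tate-analytic, with $\varphi(0) = 0$ and $D_0 \varphi = \mathrm{diag}(\lambda I_k, I_{d-k}) \in \GL_d(\Q_p)$. Proposition~\ref{ExistenceInverse} then produces a formal inverse $\Psi$, and after the rescaling $\x \mapsto p^{k'}\x$ of source and image coordinates prescribed by that proposition, $\varphi$ becomes an analytic diffeomorphism from $\Z_p^d$ onto a clopen subset $\mathcal V \subset \Z_p^d$ of the form $p^{k'}\Z_p^d$ containing $0$. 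The commutativity of the flows allows me to move $\Phi_i^{t_i}$ to the outside of the composition, yielding $\partial_{t_i}\varphi = \lambda \X_i \circ \varphi$, i.e.\ $\varphi^*(\lambda\X_{i|\mathcal V}) = \partial_i$; a final linear rescaling of source parameters absorbing the factor $\lambda$ yields the desired rectification $\varphi^* \X_{i|\mathcal V} = \partial_i$.

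For the Lie-algebra statement, the pullback $\varphi^*$ commutes with the Lie bracket by the chain rule inside $\Q_p\langle \x \rangle$, and is injective on $\Theta(\Z_p^d)_{|\mathcal V}$ because any Tate-analytic vector field on $\mathcal V$ whose pullback under the diffeomorphism $\varphi$ vanishes identically would vanish on the open image of $\varphi$, hence everywhere, by the analytic continuation principle (Corollary~\ref{AnalyticContinuation}). I expect the main obstacle to be the simultaneous bookkeeping of Gauss norms through the successive rescalings---the $\GL_d(\Q_p)$ change of variable, the rescaling $\X_i \mapsto \lambda \X_i$ needed for Proposition~\ref{PropExistenceGlobalTateAnalyticFlow}, the coordinate rescaling from Proposition~\ref{ExistenceInverse}, and the final absorption of $\lambda$---so as to guarantee that $\varphi$ is a genuine analytic diffeomorphism of $\Z_p^d$ onto $\mathcal V$ and that the rectification formula $\varphi^* \X_i = \partial_i$ holds exactly after all rescalings are unwound, rather than merely up to a nonzero constant.
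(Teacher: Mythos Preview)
Your approach is essentially the paper's: both construct the rectifying map as the composition of the flows of the~$\X_i$ applied to a complementary linear slice through the origin, then invoke the local inversion theorem (Proposition~\ref{ExistenceInverse}) and rescale source and target to land back in~$\Z_p^d$. Your explicit verification of flow commutativity via uniqueness of ODE solutions, and of the injectivity of~$\varphi^*$ via Corollary~\ref{AnalyticContinuation}, are details the paper leaves implicit.

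The one substantive difference is that the paper uses the \emph{local} analytic flows of the original vector fields~$\X_i$ (formal power series with positive radius of convergence, explicitly not assumed Tate-analytic), whereas you first rescale the fields by~$\lambda = p^M$ so as to apply Proposition~\ref{PropExistenceGlobalTateAnalyticFlow} and obtain global Tate-analytic flows. This buys you the convenience of staying inside~$\Z_p\langle\x\rangle$ throughout, but it leaves you with~$\varphi^*\X_i = \lambda^{-1}\partial_i$, and your proposed ``final linear rescaling of source parameters'' to absorb~$\lambda$ does not work as written: the required map~$t_i \mapsto \lambda^{-1}t_i$ on the first~$k$ coordinates sends~$\Z_p^d$ outside itself since~$|\lambda^{-1}| > 1$. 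The paper sidesteps this entirely by never rescaling the~$\X_i$; with local flows one has~$\partial_{t_i}g = \X_i\circ g$ directly, and the only rescalings are the simultaneous source/target dilations~$\varphi(\z)=p^{-L}g(p^K\z)$ coming from Proposition~\ref{ExistenceInverse}. You correctly anticipated that this bookkeeping is where the care is needed; the cleanest repair of your argument is simply to drop the preliminary rescaling of the~$\X_i$ and work with their local flows as the paper does.
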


\begin{rmq}
  This theorem is well known in~$p$-adic differential geometry with analytic regularity (see
  \cite{bourbaki2007varietes}),
  what is important here is that when changing coordinates we keep the Tate-analytic regularity for vector fields.
\end{rmq}

\begin{proof}
  By translation, we can suppose that~$m=0$. We pick $Y_0 \subset T_0 \Z_p^d$ such that we have the decomposition $T_0 \Z_p^d = \Vect
  ( X_1(0),\cdots, X_k (0)) \oplus Y_0$. Let~$e_{1}, \cdots, e_{d-k}$~be a basis of~$Y_0$. Pick local (analytic)
  coordinates~$(x_1,\cdots,x_k, y_1,\cdots, y_{d-k})$~such that for all~$1 \leq j \leq d-k, \frac{\partial }{\partial y_j} (0) = e_j$.

  Define :~$f: \Z_p^{d-k} \rightarrow \Z_p^d$~by \[ f(y_1,\cdots,y_{d-k}) =(0,\cdots,0, y_1,\cdots,y_{d-k}). \] Take the
  local analytic flows~$\varphi^1,\cdots, \varphi^k$~associated to~$\X_1,\cdots, \X_k$~at~$0$~(here we do not suppose these flows to
  be Tate-analytic)~and consider

  \[
    \begin{array}{crcl}
g&: \Z_p^k \times \Z_p^{d-k} & \longrightarrow & {\Z_p^d}\\
& {(t_1,\cdots,t_k; y)} & \longmapsto& { \varphi^1_{t_1} \circ \cdots \circ \varphi^k_{t_k} (f(y)).}
  \end{array}
\]
     The function~$g$~belongs to~$\Z_p
  \left[ \left[ t_1, \cdots, t_k, \y \right] \right]^d$~with a radius of convergence~$r_g >0$, satisfies~$g(0) = 0$~and
  its differential at the point~$(0,0)$~is
  \[
    (x_1,\cdots,x_k; z) \mapsto x_1 \X_1(0) +
    \cdots+ x_k \X_k(0) + \sum_j z_j \frac{\partial}{\partial y_j} (0).
  \]
  Therefore it is invertible. By Proposition
  \ref{ExistenceInverse}~$g$~admits a formal inverse~$h \in \Q_p \left[ \left[ t_1, \cdots, t_k, \y \right] \right]^d$
  with a radius of convergence~$r_h >0$. Denote by~$\z$~the set of coordinates~$(t_1, \cdots, t_k, y_1, \cdots,
  y_{d-k})$. Pick integers~$K, L$~such that~$\abs p ^K < r_g$~and~$\abs p^L < r_h$~such that~$g(B(0, \abs p^K)) \subset
  B(0, \abs p^L)$. Let $\mathcal V$ denote $g(B(0, \abs p^K))$; it is a clopen subset of~$\Z_p^d$~because~$B(0, \abs
  p^K)$~is clopen. Set~$\varphi := \frac{1}{p^L} g(p^K \z)$~and~$\psi := \frac{1}{p^K} h(p^L \z)$, they both belong to
 ~$\Q_p \langle \z \rangle^d$~and are inverse of each other and we have~$\varphi^* \X_i = \partial_i$. Finally, since
 ~$\varphi \in \Q_p \langle \z \rangle^d$, the map~$\varphi^*$~preserves Tate-analytic vector fields.
\end{proof}

\begin{thm}[$p$-adic version of \cite{epstein1979transformation} Theorem 1.1]\label{theoremPAdicEpsteinThurston}
    Let~$\h$~be a nilpotent Lie algebra of Tate-analytic vector fields of~$\Z_p^d$, then~$d \geq \dl (\h)$.
  \end{thm}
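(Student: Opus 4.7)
The plan is to induct on $d$, using the center of $\h$ together with the local linearization theorem \ref{pAdicFrob} to peel off one dimension at each step. The base case $d=0$ is trivial: the only Tate-analytic vector field is $0$, so $\dl(\h) = 0$.

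For the inductive step, assume the result in dimension $d-1$ and let $\h$ be a nonzero nilpotent Lie algebra of Tate-analytic vector fields on $\Z_p^d$. Since $\h$ is nilpotent and nonzero, its center is nonzero; choose a nonzero $X$ in the center. By Corollary \ref{AnalyticContinuation}, there is a point $m \in \Z_p^d$ with $X(m) \neq 0$. Apply Theorem \ref{pAdicFrob} to the single vector field $X$: this provides a clopen $\mathcal V \ni m$ and a Tate-analytic diffeomorphism $\varphi : \Z_p^d \to \mathcal V$ with $\varphi^* X = \partial_1$, inducing an injective Lie algebra homomorphism on Tate-analytic vector fields. Corollary \ref{AnalyticContinuation} also guarantees that the restriction $\h \to \Theta(\Z_p^d)_{|\mathcal V}$ is injective, so composing yields an injective Lie algebra embedding $\h \hookrightarrow \Theta(\Z_p^d)$ whose image $\tilde \h$ has the same derived length as $\h$ and contains $\partial_1$ in its center.

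Because every $Y = \sum_i v_i(\x)\,\partial_i \in \tilde \h$ commutes with $\partial_1$, each coefficient $v_i$ lies in $\Q_p\langle x_2,\dots,x_d\rangle$, i.e.\ is independent of $x_1$. Define a map
\[
\pi : \tilde\h \longrightarrow \Theta(\Z_p^{d-1}), \qquad \pi(Y) = \sum_{i=2}^d v_i(x_2,\dots,x_d)\,\partial_i.
\]
A direct calculation with the bracket formula shows $\pi$ is a Lie algebra homomorphism (the $x_1$-derivatives of all coefficients vanish), its image is a nilpotent Lie algebra of Tate-analytic vector fields on $\Z_p^{d-1}$, and its kernel consists of the vector fields of the form $v(x_2,\dots,x_d)\,\partial_1$, which form an abelian ideal of $\tilde\h$. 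From the short exact sequence
\[
0 \longrightarrow \ker \pi \longrightarrow \tilde\h \longrightarrow \pi(\tilde\h) \longrightarrow 0
\]
and the standard inequality $\dl(\tilde\h) \leq \dl(\ker \pi) + \dl(\pi(\tilde\h))$, we conclude $\dl(\h) = \dl(\tilde\h) \leq 1 + (d-1) = d$ by the inductive hypothesis applied to $\pi(\tilde\h)$.

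The core difficulty is not the Lie-algebraic bookkeeping but ensuring that each reduction respects the Tate-analytic category. This is exactly what the strengthened Frobenius theorem \ref{pAdicFrob} delivers, since the straightening diffeomorphism lies in $\Q_p\langle \z \rangle^d$ so pullbacks of Tate-analytic vector fields stay Tate-analytic; analytic continuation (Corollary \ref{AnalyticContinuation}) then makes restriction injective, and the projection $\pi$ obviously preserves Tate-analyticity because an element of $\Q_p\langle \x \rangle$ in which $x_1$ does not appear is Tate-analytic on $\Z_p^{d-1}$.
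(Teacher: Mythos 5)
Your proof is correct and follows essentially the same path as the paper's: pick a nonzero central vector field, use analytic continuation to find a point where it is nonvanishing, straighten it via Theorem \ref{pAdicFrob} to a coordinate field while staying Tate-analytic, observe that commuting with the straightened field forces the coefficients to be independent of the corresponding coordinate, and project to $\Z_p^{d-1}$ to get a short exact sequence with abelian kernel, so $\dl(\h) \leq 1 + (d-1)$ by induction. The only differences are cosmetic (you straighten to $\partial_1$ and project onto $(x_2,\dots,x_d)$ where the paper uses $\partial_d$ and $(x_1,\dots,x_{d-1})$).
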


  \begin{proof}
    We follow the proof of \cite{cantat2014mapping} Proposition 3.10 and proceed by induction on the dimension~$d$. If
    ~$d=0$, there is nothing to prove. Suppose~$d \geq 1$~and that the result is true in dimension~$d-1$. Since~$\h$~is
    nilpotent, its center is not trivial.
    Let~$\X$~be a nonzero central element of~$\h$. Let~$m$~be a point where~$\X(m) \neq 0$, then by Theorem
    \ref{pAdicFrob}, there exists a small clopen subset~$\mathcal V \subset \Z_p^d$~and an analytic diffeomorphism~$\varphi:
    \mathcal V \rightarrow \Z_p^d$~that yields coordinates~$x_1, \cdots, x_d$~over~$\mathcal V$~such that~$\varphi_* \X =
    \partial_d$~and such that~$\varphi_*$~maps Tate-analytic vector fields to Tate-analytic vector fields.
    By Proposition \ref{AnalyticContinuation} the morphism of restriction~$\h \rightarrow \h_{|\mathcal V}$~is an
    isomorphism of Lie algebras. We replace~$\h$~by~$\h_{|\mathcal V}$~and work with the coordinates~$x_1, \cdots, x_d$~over
   ~$\mathcal V$. Every vector field~$\mathbf Y$~of~$\h$~must commute
   with~$\X = \partial_d$~so it is of the form \[ \mathbf Y = \sum_{i=1}^d u_i(x_1,\cdots, x_{d-1}) \partial_i. \] Let~$\pi:
    \mathcal V \simeq \Z_p^d \rightarrow \Z_p^{d-1}$~be the projection over the first~$d-1$~coordinates. This yields a Lie algebra
    homomorphism~$\pi_* : \h \rightarrow \Theta(\Z_p^{d-1})$. Denote by~$\h_1$~the image of~$\h$~under~$\pi_*$~and~$\h_0$
    its kernel. We have the exact sequence \[ 0 \rightarrow \h_0 \rightarrow \h \rightarrow \h_1 \rightarrow 0. \]
    Now,~$\h_0$~consists of Tate-analytic vector fields of~$\h$~of the form~$u(x_1, \ldots, x_{d-1}) \partial_d$~so it
    is abelian and~$\h_1$~is nilpotent because~$\h$~is. So we get~$\dl(\h) \leq \dl(\h_1) + 1$~by the exact sequence
    and~$\dl(\h_1) \leq d-1$~by induction.
  \end{proof}

  We discuss the optimality of Theorem \ref{theoremPAdicEpsteinThurston} in Section \ref{SubSecOptimality}.

\paragraph{The theorem of Bell and Poonen.--}

The following theorem first proven by Bell in \cite{Bell05} then by Poonen in \cite{poonen2014p} gives us
an easy way to construct flows from analytic transformations. This is a very strong theorem as it shows
that, contrary to~$\R$, over~$\Q_p$~a lot of analytic diffeomorphisms are in a flow. See
\cite{Cantat_smf_18} for a more precise discussion on Bell-Poonen theorem.

\begin{thm}[Bell-Poonen]\label{theoremBellPoonen}
  Let~$d \geq 1$~be an integer, and~$f \in \Z_p \langle \x \rangle^d$. Take~$c > \frac{1}{p-1}$
  and suppose that~$f \equiv \id \mod p^c$, then
  \begin{enumerate}
    \item~$f$~is a Tate-analytic diffeomorphism.
    \item There exists a unique Tate-analytic flow~$\Phi \in \Z_p \langle \x,t \rangle^d$~such that
      \[ \forall n \in \Z, \quad \Phi(\x,n) = f^n (\x). \]
      In particular,~$\Phi_1 = f$.
  \end{enumerate}
\end{thm}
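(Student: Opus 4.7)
The strategy is to construct $\Phi$ by $p$-adically interpolating the integer iterates $f^n$ via a Mahler expansion. Write $f = \id + g$ with $g \in \Z_p\langle \x\rangle^d$ and $\norm{g} \leq \abs{p}^c < 1$. For part 1, the differential $D_0 f = I + D_0 g$ is invertible with $\norm{(D_0 f)^{-1}} \leq 1$, so Proposition \ref{ExistenceInverse} produces a formal inverse whose homogeneous components have Gauss norm bounded by $1$; this inverse therefore lies in $\Z_p\langle \x \rangle^d$, so $f \in \Diff^{an}(\Z_p^d)$.

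For part 2, set
\[
\Delta^k f^0 := \sum_{j=0}^k (-1)^{k-j} \binom{k}{j} f^j \in \Z_p\langle \x \rangle^d
\]
and tentatively define $\Phi(\x, t) := \sum_{k \geq 0} \binom{t}{k} \Delta^k f^0 (\x)$. Binomial inversion gives $\Phi(\x, n) = f^n(\x)$ for $n \in \Z_{\geq 0}$, and the identity extends to negative integers using that $f$ is invertible. The crux is to prove the series converges in $\Z_p\langle \x, t\rangle^d$. Two ingredients are needed. First, the polynomial $\binom{t}{k} = t(t-1)\cdots(t-k+1)/k!$ has numerator with integer coefficients, so its Gauss norm in $t$ is bounded by $\abs{k!}^{-1} \leq p^{k/(p-1)}$. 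Second, and this is the key estimate, $\norm{\Delta^k f^0} \leq \abs{p}^{ck}$. Combining these yields
\[
\norm{\binom{t}{k} \Delta^k f^0} \leq \abs{p}^{k(c - 1/(p-1))},
\]
which tends to $0$ as $k \to \infty$ precisely because $c > 1/(p-1)$; this is exactly where the hypothesis on $c$ enters. The flow axioms $\Phi(\x, 0) = \x$ and $\Phi(\x, s+t) = \Phi(\Phi(\x, s), t)$, as well as uniqueness of $\Phi$, then follow from Strassman's theorem (Proposition \ref{PropIsolatedZeroPrinciple}) applied in the $t$-variable and continuity in $\x$: both sides of each identity are Tate-analytic and agree on the dense subset $\Z_p^d \times \Z \times \Z$.

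The main obstacle is the bound $\norm{\Delta^k f^0} \leq \abs{p}^{ck}$. The identity $f^{n+1} - f^n = g \circ f^n$ combined with part 2 of Proposition \ref{truc1} immediately yields $\norm{\Delta f^0} \leq \abs{p}^c$, but iterating to pick up the factor of $k$ in the exponent requires a finer combinatorial argument. The plan is to introduce an auxiliary product $u \star v := v \circ (\id + u) - v$ and to argue by induction on $k$ that $\Delta^k f^0$ can be written as a sum of $k$-fold iterated star products involving copies of $g$; since each occurrence of $g$ contributes a factor $\norm{g} \leq \abs{p}^c$ under the Gauss norm, the claimed bound follows. Without this refinement one only recovers $\norm{\Delta^k f^0} \leq \abs{p}^c$, which is insufficient to overcome the growth $p^{k/(p-1)}$ of $\binom{t}{k}$.
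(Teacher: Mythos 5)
The paper does not prove this theorem; it only cites Bell \cite{Bell05} and Poonen \cite{poonen2014p}, so there is no in-paper argument to compare against. Your proposal is essentially Poonen's proof via the Mahler (finite-difference) expansion, and the overall strategy is sound: the convergence estimate, the role of the hypothesis $c>\frac{1}{p-1}$ in dominating $|k!|^{-1}$, and the deduction of the flow axioms and uniqueness from Strassman's theorem are all correct.

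The estimate you flag as the main obstacle, $\norm{\Delta^k f^0}\leq \abs{p}^{ck}$, is a genuine gap but is filled by a single sharpening of Proposition \ref{truc1}(3): for $u,v\in\Z_p\langle\x\rangle^d$ one has $\norm{v\circ(\id+u)-v}\leq\norm{v}\,\norm{u}$. Indeed, writing $v=\sum_I a_I\x^I$ gives
\[
v(\x+u)-v(\x)=\sum_I a_I\sum_{0<J\leq I}\binom{I}{J}\x^{I-J}u^J,
\]
and every monomial in the inner sum carries at least one factor $u_i$, so $\norm{(\x+u)^I-\x^I}\leq\norm{u}$ and the bound follows. Since $\Delta^{k+1}f^0=(\Delta^kf^0)\circ(\id+g)-\Delta^kf^0$, induction gives $\norm{\Delta^kf^0}\leq\norm{g}^k\leq\abs{p}^{ck}$. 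Your star-product bookkeeping is exactly this observation, once one notes that $\Delta^kf^0=g\star(g\star(\cdots(g\star\id)\cdots))$ is a single $k$-fold iterate rather than a sum. Combined with $v_p(k!)=\frac{k-s_p(k)}{p-1}<\frac{k}{p-1}<ck$ for $k\geq1$, this also shows every term of the series has Gauss norm $\leq1$, so $\Phi$ genuinely lands in $\Z_p\langle\x,t\rangle^d$ and not just in $\Q_p\langle\x,t\rangle^d$.

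Your argument for part 1, however, does not work as written. Proposition \ref{ExistenceInverse} assumes $\Phi(0)=0$, which need not hold for $f$; more seriously, the bound $\norm{\Psi_n}\leq\max(1,\norm{(D_0\Phi)^{-1}}^n)\leq1$ only puts the homogeneous components of the formal inverse in $\Z_p[\x]^d$ and does not show $\norm{\Psi_n}\to0$, so it does not place the inverse in the Tate algebra $\Z_p\langle\x\rangle^d$. Either run a Banach fixed-point argument on $h\mapsto -g\circ(\id+h)$ in the complete space $\Z_p\langle\x\rangle^d$ (the map is a contraction with constant $\norm{g}<1$ by the refined estimate above), or, more in the spirit of your proof, deduce part 1 from part 2: once $\Phi\in\Z_p\langle\x,t\rangle^d$ is constructed, $\Phi_{-1}\in\Z_p\langle\x\rangle^d$ and $\Phi_{-1}\circ f=\Phi_0=\id$, so $f^{-1}=\Phi_{-1}$ is Tate-analytic. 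One last small point on the logic: Mahler inversion gives $\Phi(\x,n)=f^n(\x)$ for $n\in\Z_{\geq0}$, the flow identity is then verified by Strassman on the dense set $\Z_p^d\times\Z_{\geq0}\times\Z_{\geq0}$, and only afterwards does $\Phi(\x,-n)=f^{-n}(\x)$ follow; invertibility of $f$ is not used to extend to negative integers but is rather a consequence.
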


In fact, Poonen showed this theorem for the valuation ring of any ultrametric field~$\K$. So,
Bell-Poonen Theorem also holds over~$\D_p$~or over any finite extension of~$\Q_p$~for example.

\begin{cor}\label{NoTorsion}
  Let~$H$~be a subgroup of~$\Diff^{an}_1 (\Z_p^d)$~with~$p \geq 3$, then~$H$~is torsion-free.
\end{cor}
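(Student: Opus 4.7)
The plan is to use the Bell--Poonen theorem to embed any putative torsion element into a continuous one-parameter subgroup, and then use analytic continuation to force that element to be the identity. Let $f \in H$ satisfy $f^n = \id$ for some $n \geq 1$; the aim is to show $f = \id$.

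Since $p \geq 3$ we have $1 > \frac{1}{p-1}$, so with $c = 1$ the Bell--Poonen theorem (Theorem~\ref{theoremBellPoonen}) applies to $f \equiv \id \mod p$: there exists a unique Tate-analytic flow $\Phi \in \Z_p \langle \x, t \rangle^d$ with $\Phi(\x,k) = f^k(\x)$ for every $k \in \Z$. In particular $\Phi(\x, n) = \x$ for all $\x \in \Z_p^d$, and by the flow property $\Phi(\x, kn) = \x$ for every $k \in \Z$.

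Next, set $a := v_p(n)$; the integer $n/p^a$ is a unit in $\Z_p$, so its $\Z$-multiples are dense in $\Z_p$, which means $n\Z$ is dense in $p^a \Z_p$. Since $t \mapsto \Phi(\cdot, t)$ is continuous (Tate-analytic in $t$, hence in particular $p$-adic continuous), we deduce $\Phi(\x, t) = \x$ for every $\x \in \Z_p^d$ and every $t \in p^a \Z_p$. Thus each coordinate of $\Phi(\x, t) - \x$ is a Tate-analytic function on $\Z_p^{d+1}$ vanishing on the non-empty clopen set $\Z_p^d \times p^a \Z_p$. By Corollary~\ref{AnalyticContinuation}, $\Phi(\x, t) \equiv \x$, and in particular $f = \Phi_1 = \id$.

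There is no real obstacle: the argument is just a packaging of Bell--Poonen (which provides the one-parameter flow extension of $f$) with the analytic continuation principle (which turns the discrete vanishing at integer multiples of $n$ into global vanishing). The hypothesis $p \geq 3$ is used only to make $c = 1 > \frac{1}{p-1}$ fit the Bell--Poonen hypothesis, and indeed the statement fails for $p = 2$: for instance $\x \mapsto -\x$ has order two and lies in $\Diff^{an}_1(\Z_2^d)$.
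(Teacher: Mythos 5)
Your proof is correct and follows essentially the same route as the paper: Bell--Poonen gives a Tate-analytic flow extending the torsion element, and an analytic-continuation argument forces the flow to be trivial. The only minor difference is that the paper applies Strassman's theorem in the single variable $t$ for each fixed $\x$ (infinitely many integer zeros implies $\Phi_t(\x)\equiv\x$), whereas you first pass to the clopen set $\Z_p^d\times p^a\Z_p$ by a density argument before invoking Corollary~\ref{AnalyticContinuation}; both are valid.
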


\begin{proof}
  Let~$h \in H$, suppose that~$h$~has order~$N < \infty$. By Theorem \ref{theoremBellPoonen}, there exists an
  Tate-analytic flow~$\Phi$~such that~$\Phi_1 = h$. Then for all
 ~$\x \in \Z_p^d$~the function~$t \in \Z_p \mapsto \Phi_t (\x) - \x \in \Z_p^d$~is analytic and has an infinite number of
  zeros, so it is zero everywhere by Proposition \ref{AnalyticContinuation}. Therefore~$\Phi_1 (\x) = h(\x) = \x$~and~$h =
  \id$.
\end{proof}

The next proposition won't be used in the proof of Theorem \ref{BoundNilpotentGroups} but it gives useful
information on the dynamics of Tate-analytic flows.

\begin{prop}\label{StableAIndiceFiniPres}
  Let~$\Phi \in \Z_p \langle \x,t \rangle$~be a Tate-analytic flow over~$\Z_p^d$.
  If~$\U \subset \Z_p^d$~is a clopen set, then there exists an~$\epsilon >0$~such that \[ \forall t \in \Z_p,
  \quad \abs t \leq \epsilon \Rightarrow \Phi_t (\U) = \U. \]
\end{prop}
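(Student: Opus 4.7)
The plan is to use the compactness of $\Z_p^d$ to produce a positive gap between $\U$ and its complement, and then to bound $\|\Phi_t(x) - x\|$ uniformly in $x$ by a small multiple of $|t|$, so that for $|t|$ small enough $\Phi_t$ cannot send any point of $\U$ across this gap.

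First I would dispose of the trivial cases $\U = \emptyset$ or $\U = \Z_p^d$ and then observe that, since $\Z_p^d$ is compact and $\U$ is clopen, both $\U$ and $\U^c$ are nonempty, disjoint and compact. The quantity
\[
\delta := \inf\{\|x-y\| : x \in \U,\ y \in \U^c\}
\]
is therefore attained and strictly positive.

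Next, because $\Phi \in \Z_p \langle \x, t \rangle^d$ and $\Phi(\x, 0) = \x$, I expand $\Phi(\x, t) - \x = \sum_{k \geq 1} a_k(\x)\, t^k$ with each $a_k \in \Z_p \langle \x \rangle^d$ of Gauss norm at most $\|\Phi\| \leq 1$. Consequently $\|\Phi_t - \id\| \leq |t|$ in $\Q_p \langle \x \rangle^d$ for every $t \in \Z_p$, and evaluating at any $x \in \Z_p^d$ (using the standard bound $|g(x)| \leq \|g\|$) gives $\|\Phi_t(x) - x\| \leq |t|$.

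Finally, I pick any $\epsilon \in (0, \delta)$. For $|t| \leq \epsilon$ and $x \in \U$, the bound $\|\Phi_t(x) - x\| \leq |t| < \delta$ forbids $\Phi_t(x)$ from lying in $\U^c$, so $\Phi_t(\U) \subseteq \U$. Applying the same inclusion to $-t$ and using the flow identity $\Phi_t \circ \Phi_{-t} = \id$ gives the reverse inclusion, hence $\Phi_t(\U) = \U$. There is no real obstacle in this argument; the only substantive ingredient is the positivity of $\delta$, which comes from compactness of $\Z_p^d$ together with the clopen nature of $\U$.
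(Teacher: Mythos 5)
Your proof is correct. The argument is close in spirit to the paper's, but the packaging is cleaner and slightly more quantitative. The paper proves the claim first for a single ball $B(x,r)$ --- using the soft fact that $\Phi_t \to \id$ in Gauss norm as $t\to 0$ together with the ultrametric inequality to get $\Phi_t(B(x,r))=B(x,r)$ --- and then covers $\U$ by finitely many such balls via compactness, taking the minimum of the resulting $\epsilon$'s. You instead extract the explicit bound $\norm{\Phi_t-\id}\leq \abs t$ directly from the expansion $\Phi(\x,t)-\x=\sum_{k\geq 1} a_k(\x)t^k$ (valid since all $a_k\in\Z_p\langle\x\rangle^d$ have Gauss norm $\leq 1$), and use the positive gap $\delta=\mathrm{dist}(\U,\U^c)$ furnished by compactness of the two clopen pieces. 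Your route avoids the finite cover and gives, as a small bonus, the uniform Lipschitz-in-$t$ estimate $\norm{\Phi_t(x)-x}\leq\abs t$, which is sharper than what the paper's soft continuity argument records; the paper's route is closer to how one would reason without writing out the power series. Both hinge on the same three ingredients: compactness of $\Z_p^d$, the ultrametric inequality, and the fact that $\Phi_t$ is uniformly close to the identity for small $\abs t$.
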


\begin{proof}
  Fix~$x \in \Z_p^d$~and~$0 < r \leq 1$. Since~$\Phi_t \rightarrow
  \id$~as~$t \rightarrow 0$~in~$\Diff^{an} (\Z_p)$, there exists~$\epsilon >0$~such that for all~$t \in \Z_p$,~$\abs t
  \leq \epsilon \Rightarrow \norm{\Phi_t - \id} \leq r$. Now for all~$z \in \Z_p^d, \norm{ \Phi_t (z) - z}
  \leq \norm{\Phi_t - \id} \leq r$. Then, for all~$y$~such that~$\norm{y -x} \leq r$,

  \begin{align*}
\norm{\Phi_t (y) - x}
&= \norm{\Phi_t (y) - y + y -x } \\ &\leq \max( \norm{\Phi_t(y) - y}, \norm{y-x} ) \leq r.
  \end{align*}

  So if~$\abs t \leq \epsilon$, we have~$\Phi_t (B(x,r)) \subset B(x,r)$~and~$\Phi_{-t}(B(x,r)) \subset B(x,r)$, so
  we get the equality.

  Since~$\U$~is clopen, by compactness,~$\U = \bigcup_{i=1}^T B(x_i, r_i)$~for some finite set~$\left\{
  x_1, \cdots, x_T \right\} \subset \U$~and radii~$r_i \in (0, 1]$. Thus, the results follows from the case of one
ball.  \end{proof}

\subsection{Infinite-dimensional analytic manifold over~$\Q_p$}\label{SecInfiniteDimensionalAnalyticmanifolds}
The main goal of the next two sections is to show that the topological group~$\Diff^{an}(\Z_p^d)$~is in fact an infinite
dimensional Lie group over~$\Q_p$.

We refer to \cite{bourbaki2007varietes} for reference on analytic functions and analytic manifolds over a Banach
space. In this section~$\k$~is an ultrametric complete field and~$E, F$~are Banach spaces over~$\k$~(potentially of infinite
dimension). As we shall see,
taking~$\k  = \Q_p$~and~$E, F = \Q_p^d$~allows one to recover the definition of converging power series and analytic
functions over~$\Q_p^d$.

Basically, if~$A$~is a Banach algebra over~$\Q_p$, then any map of the form~$f: A^d \rightarrow A$~such that locally at
any point~$x \in A^d$, there is a expression of~$f$~as a converging power series
\[ f(x + h) = \sum_{I \subset \Z_+^d} a_I h^I \]
with~$a_I \in A, a_I \rightarrow 0$~is an analytic map from~$A^d$~to~$A$. The problem is that if~$A$~is not finite dimensional,
this definition is not enough, as for example a continuous linear map is not necessarily described by an expression of
this form but still should be analytic.

\paragraph{Multi-indices, multi-linear maps.--}
If~$\alpha = (\alpha_1, \cdots, \alpha_d) \in \Z_+^d$~is a multi-index, then~$\abs \alpha := \sum_i \alpha_i$. For~$1 \leq
j \leq \abs \alpha$, we define
\[ \alpha(j) = \max \left\{ k + 1 \in \Z_+ : \alpha_1 + \cdots + \alpha_{k} < j \right\}. \]
The sequence~$(\alpha(j))_{1 \leq j \leq \abs \alpha}$~is the increasing sequence consisting of~$\alpha_1$~times the
number 1,~$\alpha_2$~times the number 2, \ldots,~$\alpha_d$~times the number~$d$. For example, if~$\alpha = (1, 5 ,7)$,
then~$d=3, \abs \alpha = 13$~and
\[ (\alpha(j))_{1 \leq j \leq 13} = (1, 2, 2, 2, 2, 2, 3, 3, 3, 3, 3, 3, 3).
\]
For~$1 \leq i \leq d$, we denote by
$p_i: E^d \rightarrow E$~the projection to the~$i$-th coordinate. For a multi-index~$\alpha \in \Z_+^d$, we define
\[
p_\alpha := (p_{\alpha(j)})_{1 \leq j \leq \abs \alpha}: E^d \rightarrow E^{\abs \alpha}.
\]

If~$\beta \in \Z_+^d$~is another multi-index, then we write~$\alpha + \beta$~for the multi-index~$(\alpha_i +
\beta_i)_{1 \leq i \leq d}$.
We write~$\alpha \geq \beta$~if~$\alpha_i \geq \beta_i$~for all~$1 \leq i \leq d$; in that case there is a unique
multi-index~$\gamma$~such that~$\alpha = \beta + \gamma$, and we set~$\alpha - \beta := \gamma$. We also define the
binomial coefficient~$\binom{\alpha}{\beta} := \binom{\alpha_1}{\beta_1} \cdots \binom{\alpha_d}{\beta_d}$. Finally, if
$\x = (x_1, \cdots, x_d)$, then~$\x^\alpha := x_1^{\alpha_1} \cdots x_d^{\alpha_d}$~ and if~$\mathbf y = (y_1, \cdots,
y_d)$, one has the identity
\begin{align*}
  (\x + \mathbf y)^\alpha &= (x_1 + y_1)^{\alpha_1} \cdots (x_d + y_d)^{\alpha_d}  \\
  &= \left(\sum_{\beta_1 = 0}^{\alpha_1} \binom{\alpha_1}{\beta_1}x_1^{\beta_1} y_1^{\alpha_1 - \beta_1}   \right) \cdots
  \left( \sum_{\beta_d = 0}^{\alpha_d} \binom{\alpha_d}{\beta_d}x_d^{\beta_d} y_d^{\alpha_d - \beta_d}   \right) \\
  &= \sum_{0 \leq \beta_1 \leq \alpha_1} \cdots \sum_{0 \leq \beta_d \leq \alpha_d} \binom{\alpha_1}{\beta_1} \cdots
  \binom{\alpha_d}{\beta_d} x_1^{\beta_1} \cdots x_d^{\beta_d} y_1^{\alpha_1 - \beta_1} \cdots y_d^{\alpha_d - \beta_d} \\
  &= \sum_{\beta \leq \alpha }\binom{\alpha}{\beta} \x^{\beta} \mathbf y^{\alpha - \beta}.
\end{align*}
For an integer~$k$, let~$\mathcal L_k (E, F)$~be the set of continuous multilinear maps from~$E^k$~to~$F$~equipped with
the topology of uniform convergence over bounded subsets. The norm of an element~$\phi \in \mathcal L_k(E,F)$~is defined by
\[ \norm \phi = \inf \left\{ a > 0 : \forall x_1, \cdots, x_k \in E^k, \norm{\phi(x_1, \cdots, x_k)}_F \leq a \norm{x_1}_E \cdots \norm{x_k}_E \right\}. \]

\paragraph*{Continuous polynomial maps and power series.--} (\cite{bourbaki2007varietes} Appendix of \S 1-7)  A
\emph{continuous homogeneous polynomial map of multi degree}~$\alpha$, is a map~$f: E^d \rightarrow F$
such that there exists~$u \in \mathcal L_{\abs{\alpha}}(E,F)$~for which~$f = u \circ p_\alpha$.
We denote by~$P_\alpha(E,F)$~the vector space of continuous homogeneous polynomial maps of multi-degree~$\alpha$~equipped
with the quotient topology from~$\mathcal L_{\abs {\alpha}}(E,F)$. The norm of a continuous homogeneous polynomial map~$P \in P_\alpha
(E,F)$~is defined by
\[ \norm P := \inf_{u \in \mathcal L_{\abs{\alpha}} (E,F), P = u \circ p_\alpha } \norm u_{\mathcal L_{\abs{\alpha}}(E,F)}. \]

\begin{ex}
  Set~$E, F = \Q_p \langle \x \rangle$. Let~$P$~be the monomial~$\x^\alpha$, then the map~$P: g \in \Q_p
  \langle \x \rangle^d \mapsto P(g) \in \Q_p \langle \x \rangle$~is a
  continuous homogeneous polynomial map of multi-degree~$\alpha$. Indeed, let~$k = \abs \alpha$~and
  consider the multilinear map
  \[
  \begin{array}{lccc}
  {T_{k}}:& {E^{k}}& \longrightarrow & {F}\\
  &{(f_1, \cdots, f_{k})} &\longmapsto &{f_1 \cdots f_{k}};
  \end{array}
\]
  it is continuous as~$\norm{ T_{k}(f_1, \cdots, f_{k})} \leq \norm{f_1} \cdots \norm{f_{k}}$~and~$~P = T_{k} \circ
  p_\alpha$.

  Furthermore, for a multi-index~$\beta$, define~$\phi_\beta: \Q_p \langle \x \rangle \rightarrow \Q_p \langle \x
  \rangle$~such that~$\phi_\beta (g)$~is the homogeneous part of multi-degree~$\beta$~of~$g$. Then,~$\phi_\beta$~is
  linear and continuous, therefore if~$P(\x) = \x^\alpha$, the map~$g \in \Q_p \langle \x \rangle^d \mapsto
  P(\phi_{\beta_1}(g_1), \cdots, \phi_{\beta_d}(g_d))$~is a continuous homogeneous polynomial map of
  multi-degree~$\alpha$~for any multi-index $(\beta_i)_{1 \leq i \leq d}$.
\end{ex}

For an integer~$k$,~$P_k (E^d,F)$~is the direct sum of the~$P_\alpha(E,F)$~for~$\alpha$~such that~$\abs \alpha = k$, the
elements of~$P_k (E^d,F)$~are the \emph{continuous homogeneous polynomial maps of total degree~$k$.}

\begin{ex}\label{ExampleOfhomogeneousContinuousPolynomialMap}
  If~$P \in \Q_p [\x]$~is a homogeneous polynomial of degree~$k$~in~$d$~variables, then the map~$P: g \in \Q_p \langle \x \rangle^d
  \mapsto P(g)$~is a continuous homogeneous polynomial map of total degree~$k$~and for any sequence of multi-index
  ~$(\beta_i)_{1 \leq i \leq d}$, the map~$g \in \Q_p \langle \x \rangle^d \mapsto P(\phi_{\beta_1}(g), \cdots,
  \phi_{\beta_d}(g))$~also is.
\end{ex}

We denote by~$P(E^d, F)$~the direct sum of the spaces~$P(E^d, F)$, its elements are the \emph{continuous polynomial maps in
$d$~variables}.

\begin{prop}
  Set~$E, F = \Q_p \langle \x \rangle$. Take a polynomial~$P \in \Q_p [\x ]$. Then,~$P$~induces a
  continuous polynomial map~$E^d \rightarrow F$~and the linear embedding~$\Q_p[\x] \hookrightarrow P(E^d, F)$~is an isometry.
\end{prop}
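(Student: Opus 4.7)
The plan is to reduce the assertion to the case of a monomial and then assemble. For the first assertion, any $P = \sum_\alpha a_\alpha \x^\alpha \in \Q_p[\x]$ is a finite sum of monomials, each of which induces a continuous homogeneous polynomial map by Example \ref{ExampleOfhomogeneousContinuousPolynomialMap}, so $P$ itself induces a continuous polynomial map $E^d \to F$.

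For the isometry, I would first pin down the norm of a single monomial $a_\alpha \x^\alpha$ viewed as an element of $P_\alpha(E,F)$. The upper bound $\norm{a_\alpha \x^\alpha}_{P_\alpha(E,F)} \leq \abs{a_\alpha}$ follows from the factorization $a_\alpha \x^\alpha = T_\alpha \circ p_\alpha$ with the continuous multilinear map $T_\alpha(f_1, \ldots, f_{\abs \alpha}) := a_\alpha f_1 \cdots f_{\abs \alpha}$, using that the Gauss norm on $\Q_p \langle \x \rangle$ is multiplicative to conclude $\norm{T_\alpha} \leq \abs{a_\alpha}$. For the matching lower bound, I would evaluate any factorization $u \circ p_\alpha = a_\alpha \x^\alpha$ at the coordinate tuple $(x_1, \ldots, x_d) \in E^d$: the output is the monomial $a_\alpha x_1^{\alpha_1} \cdots x_d^{\alpha_d} \in \Q_p\langle \x\rangle$, whose Gauss norm is exactly $\abs{a_\alpha}$, and this is at most $\norm u \cdot \prod_i \norm{x_i}^{\alpha_i} = \norm u$ since each coordinate $x_i$ has Gauss norm $1$. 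Taking the infimum over $u$ yields $\abs{a_\alpha} \leq \norm{a_\alpha \x^\alpha}_{P_\alpha(E,F)}$, hence equality.

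To finish, I use that the norm on the direct sum $P(E^d, F) = \bigoplus_\alpha P_\alpha(E, F)$ is the ultrametric maximum of the components (the natural non-archimedean direct-sum norm), which gives
\[
\norm{P}_{P(E^d, F)} = \max_\alpha \norm{a_\alpha \x^\alpha}_{P_\alpha(E,F)} = \max_\alpha \abs{a_\alpha} = \norm{P}_{\mathrm{Gauss}}.
\]
Linearity and injectivity of the embedding $\Q_p[\x] \hookrightarrow P(E^d, F)$ are immediate from this isometry. The main subtle point I expect is justifying the direct-sum norm convention: one checks that the canonical projections $P(E^d, F) \twoheadrightarrow P_\alpha(E,F)$ are norm-decreasing and the inclusions $P_\alpha(E,F) \hookrightarrow P(E^d, F)$ are isometric, which together force the decomposition $P = \sum_\alpha P_\alpha$ to satisfy $\norm{P}_{P(E^d,F)} = \max_\alpha \norm{P_\alpha}_{P_\alpha(E,F)}$ in the ultrametric setting.
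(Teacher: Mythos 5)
The paper states this proposition without proof, so there is no internal argument to compare against; your proposal fills the gap and is essentially correct. The monomial-by-monomial reduction is the natural approach, and the lower-bound trick of evaluating any factorization $u \circ p_\alpha = a_\alpha \x^\alpha$ at the coordinate tuple $(x_1,\ldots,x_d) \in E^d$ (each of Gauss norm $1$) cleanly recovers $\abs{a_\alpha}$, which together with the upper bound from the concrete multilinear map $T_\alpha$ pins down $\norm{a_\alpha\x^\alpha}_{P_\alpha(E,F)} = \abs{a_\alpha}$. One small slip: for the upper bound you only need submultiplicativity of the Gauss norm, which is what the paper actually states; you invoke multiplicativity, which is true for Tate algebras but not established in the text.

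The point I would tighten is the last paragraph. The norm on the direct sum $P(E^d,F) = \bigoplus_\alpha P_\alpha(E,F)$ is not something you should try to \emph{derive} by arguing that projections are norm-decreasing and inclusions isometric --- those properties are consequences of whichever norm you fix, not constraints that determine it a priori. The paper leaves the norm implicit, but the intended convention (and the one in Bourbaki, which it cites) is the sup/max norm on the direct sum. Once you simply declare that convention, your chain $\norm{P}_{P(E^d,F)} = \max_\alpha\norm{a_\alpha\x^\alpha}_{P_\alpha(E,F)} = \max_\alpha\abs{a_\alpha} = \norm{P}_{\mathrm{Gauss}}$ is immediate, and linearity and injectivity follow. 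So the substance is right; just replace the ``one checks\ldots which together force\ldots'' passage with a one-line statement of the direct-sum norm convention.
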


Finally, the set~$\hat P (E^d, F)$~of \emph{power series} in~$d$~variables over~$E$~is the (infinite) product of the~$P_\alpha(E,
F)$~(or of the~$P_k(E^d, F)$) for~$\alpha \in \Z_+^d$~(for~$k \in \Z_+$) equipped with the product topology of the discrete topology
over each factor; equivalently if~$f = \sum_\alpha f_\alpha \in \hat P(E^d, F)$, then the order of vanishing at~$0$~of
$f$~is~$\ord (f) = \min \left\{ \abs \alpha : f_\alpha \neq 0 \right\}$~and this is the topology induced by the norm~$\norm f :=
2^{- \ord(f)}$. The
space~$\hat P(E^d, F)$~is complete Hausdorff for this topology. A \emph{converging power
series} is an element~$f = \sum_\alpha f_\alpha$~of~$\hat P(E^d, F)$~such that there exists~$R \in (\R_{>0})^d$
satisfying
$\sup_\alpha R^{\alpha} \norm{f_\alpha}_{P_\alpha(E,F)} < + \infty$. If~$f = \sum_\alpha f_\alpha$, then the \emph{polyradius of
convergence of~$f$} is
\[
  r(f) := \sup \left\{ R \in (\R_{>0})^d : R^{\alpha} \norm{f_\alpha} \rightarrow 0 \text{ when } \abs
  \alpha \rightarrow \infty \right\}.
\]

\begin{dfn}
  Let~$\mathcal U$~be an open subset of~$E^d$, a map~$f: \mathcal U \rightarrow F$~is \emph{analytic} at a point~$a \in
  \mathcal U$~if there exists a converging power series~$f_a$~such that for all~$x$~in a small neighbourhood of~$a$~in
  ~$\mathcal U, f(a +x) = f_a (x)$. The function~$f$~is analytic if it is analytic at every point of~$\mathcal U$.

  For any integer~$m \geq 1$, a map~$f: \mathcal U \rightarrow F^m$~is analytic if each of its coordinates is analytic.
\end{dfn}

\begin{ex}
  Every continuous linear map~$\Q_p \langle \x \rangle^d \rightarrow \Q_p \langle \x \rangle^d$~is analytic.
\end{ex}

\begin{prop}\label{PropCompositionIsAnalytic}
  The map~$~\text{\emph{Comp}}: (h, f) \in \Z_p \langle \x \rangle^d \times \Z_p \langle \x \rangle^d \mapsto h \circ f \in
  \Z_p \langle \x \rangle^d$~is analytic. In particular, it is linear in~$h$.
\end{prop}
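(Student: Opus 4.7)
The plan is to fix an arbitrary base point $(h_0, f_0) \in \Z_p \langle \x \rangle^d \times \Z_p \langle \x \rangle^d$ and to expand $\text{Comp}(h_0 + h, f_0 + f)$ as a convergent power series in the increment $(h, f)$, in the sense of Section \ref{SecInfiniteDimensionalAnalyticmanifolds}. Linearity in $h$ is immediate from the definition, since $(h_1 + h_2) \circ f = h_1 \circ f + h_2 \circ f$ and $(\lambda h) \circ f = \lambda (h \circ f)$; continuity of $h \mapsto h \circ f$ comes from Proposition \ref{truc1}(1). Using this same distributivity in the first variable I split
\[ (h_0 + h) \circ (f_0 + f) \;=\; h_0 \circ (f_0 + f) \;+\; h \circ (f_0 + f), \]
so it is enough to show that each summand is an analytic function of $(h,f)$.

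For the first summand, write $h_0 = \sum_{\alpha} c_\alpha \x^\alpha$ with $c_\alpha \in \Z_p$ and $|c_\alpha| \to 0$, and apply the multinomial identity $(f_0 + f)^\alpha = \sum_{\gamma \leq \alpha} \binom{\alpha}{\gamma} f_0^{\alpha - \gamma} f^\gamma$ inside the commutative Banach algebra $\Z_p \langle \x \rangle$. Formally reordering the resulting double sum by $\gamma$ gives
\[ h_0 \circ (f_0 + f) \;=\; \sum_{\gamma} A_\gamma \cdot f^\gamma, \qquad A_\gamma \;:=\; \sum_{\alpha \geq \gamma} \binom{\alpha}{\gamma} c_\alpha \, f_0^{\alpha - \gamma} \;\in\; \Z_p \langle \x \rangle. \]
By Example \ref{ExampleOfhomogeneousContinuousPolynomialMap} each map $f \mapsto A_\gamma \cdot f^\gamma$ is a continuous homogeneous polynomial map of multi-degree $\gamma$ from $\Z_p \langle \x \rangle^d$ to $\Z_p \langle \x \rangle$. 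An identical manipulation yields $h \circ (f_0 + f) = \sum_{\gamma} B_\gamma(h) \cdot f^\gamma$, with $B_\gamma(h) := \sum_{\alpha \geq \gamma} \binom{\alpha}{\gamma} h_\alpha f_0^{\alpha - \gamma}$ continuous linear in $h$ and homogeneous of multi-degree $\gamma$ in $f$; this summand is thus a formal series of bi-degrees $(1,\gamma)$ in $(h,f)$.

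The main obstacle is the quantitative verification that these formal reorderings are genuine identities with positive polyradius of convergence. Using $|\binom{\alpha}{\gamma}|_p \leq 1$, submultiplicativity of the Gauss norm on $\Z_p \langle \x \rangle$, and $\norm{f_0} \leq 1$, the ultrametric inequality gives the estimate
\[ \norm{A_\gamma} \;\leq\; \sup_{\alpha \geq \gamma} |c_\alpha|, \]
and the right-hand side tends to $0$ as $|\gamma| \to \infty$ because $|c_\alpha| \to 0$. This guarantees convergence of the inner sum defining $A_\gamma$ in $\Z_p \langle \x \rangle$, convergence of the outer series for every $f$ with $\norm{f} \leq 1$ (so the polyradius is at least $1$), and, by completeness and ultrametricity of the Gauss norm, legitimacy of the reordering identifying the double sum with the true composition $h_0 \circ (f_0 + f)$. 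The parallel bound $\norm{B_\gamma(h)} \leq \norm{h}$ handles the bilinear piece on any polydisk of radius $< 1$. Since $(h_0, f_0)$ was arbitrary, Comp is analytic at every point of $\Z_p \langle \x \rangle^d \times \Z_p \langle \x \rangle^d$, and the linearity in $h$ is recorded by the fact that only bidegrees $(0,\gamma)$ and $(1,\gamma)$ appear in the expansion.
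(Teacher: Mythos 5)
Your proof is correct and takes essentially the same route as the paper's: expand $h$ in monomials, apply the multinomial formula $(f_0+f)^\alpha = \sum_{\gamma\le\alpha}\binom{\alpha}{\gamma}f_0^{\alpha-\gamma}f^\gamma$, reorganize by $\gamma$, and recognize each summand as a continuous homogeneous polynomial map as in Example \ref{ExampleOfhomogeneousContinuousPolynomialMap}. The only cosmetic difference is that the paper expands directly from $(0,f_0)$, letting linearity in $h$ take care of the rest, whereas you first split off $h_0\circ(f_0+f)$; your convergence estimate $\norm{A_\gamma}\le\sup_{\alpha\ge\gamma}|c_\alpha|\to 0$ is in fact a slightly more careful rendering of the step that the paper records as ``$\norm{Q_\beta}\to 0$'' (an operator-norm claim that is not literally true but harmless, since boundedness suffices for the convergence criterion of \S\ref{SecInfiniteDimensionalAnalyticmanifolds}).
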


\begin{proof}
  It is enough to show that the map~$\Phi: (h,f) \in \Z_p \langle \x \rangle \times \Z_p \langle \x \rangle^d \mapsto h
  \circ f \in \Z_p \langle \x \rangle$~is analytic.
  Let~$(h, f) \in \Z_p \langle \x \rangle \times \Z_p \langle \x \rangle^d$, we show that~$\Phi$~is analytic at~$(h,
  f)$. Let~$g \in \Z_p \langle \x \rangle^d$~and write~$h(\x) = \sum_\alpha a_\alpha \x^\alpha$, then
  \begin{align*}
    h \circ (f + g(\x)) &= \sum_\alpha a_\alpha (f(\x) + g(\x))^\alpha \\
    &= \sum_\alpha \sum_{\gamma \leq \alpha } a_\alpha \binom{\alpha}{\gamma} f(\x)^{\alpha - \gamma} g(\x)^{\gamma} \\
    &= \sum_{\beta} \left( \sum_{\alpha \geq \beta} a_\alpha \binom{\alpha}{\beta}f(\x)^{\alpha - \beta} \right) g(\x)^\beta \\
    &= \sum_\beta Q_{\beta,f}(h)(\x) \cdot g(\x)^\beta
  \end{align*}
  where~$Q_{\beta,f}: \Q_p \langle \x \rangle \rightarrow \Q_p \langle \x \rangle$~is a continuous linear
  map and $\norm {Q_\beta} \rightarrow 0$ when $\beta \rightarrow \infty$, this is a
  converging power series in the variables~$(h, g)$~of polyradius of convergence~$(+ \infty, 1)$. Therefore~$\Phi$~is
  analytic at any point~$(0, f)$~and by linearity in~$h, \Phi$~is analytic at any point~$(h,f)$.
\end{proof}

\paragraph{Analytic manifolds.--}
Let~$\K$~be an ultrametric field and let~$X$~be a topological space. A~$\K$-\emph{chart} of~$X$~is a
homeomorphism~$\phi: U \rightarrow \phi(U) \subset E$~where~$U$~in an open subset of~$X$~and~$E$~a Banach space over~$\K$. We say that two
$\K$-charts~$\phi: U \rightarrow E, \psi: V \rightarrow F$~are \emph{compatible} if
\begin{enumerate}
  \item~$\phi (U \cap V)$~is open in~$E$~and~$\psi (U \cap V)$~is open in~$F$.
  \item~$\psi \circ \inv \phi : \phi (U \cap V) \rightarrow F$~is analytic.
  \item~$\phi \circ \inv \psi: \psi (U \cap V) \rightarrow E$~is analytic.
\end{enumerate}

An analytic  manifold~$X$~over~$\K$~is defined classically as a topological space equipped with
an atlas of compatible~$\K$-charts. For a
point~$x \in X$, the tangent space at~$x$~is denoted by~$T_x X$. A
function~$f: X \rightarrow Y$~between two analytic manifolds is analytic if for every chart~$\phi: U \subset X
\rightarrow E, \psi: V \subset Y \rightarrow F$, the map~$\psi \circ f \circ \inv \phi: \inv \phi(U) \rightarrow F$~is
analytic. The differential of~$f$~at a point~$x$~will be denoted~$D_x f$.

\begin{prop}\label{PropDiffAnIsAnAnalyticVariety}
  The topological space~$\Diff^{an}(\Z_p^d)$~is an analytic manifold over~$\Q_p$, it is in fact an open subset of the
  Banach space~$\Q_p \langle \x \rangle^d$. The subgroups~$\Diff^{an}_c(\Z_p^d)$
  for~$c > \frac{1}{p-1}$~are diffeomorphic to~$\Z_p \langle \x \rangle^d$~and they form a basis of neighbourhood of
 ~$\id$~in~$\Diff^{an}(\Z_p^d)$. \end{prop}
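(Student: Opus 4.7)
The plan is to prove the three assertions in turn, the main tools being Bell--Poonen (Theorem~\ref{theoremBellPoonen}) together with Proposition~\ref{truc1}.

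First I would show that $\Diff^{an}(\Z_p^d)$ is open in $\Q_p \langle \x \rangle^d$. Since $\Z_p \langle \x \rangle^d$ is itself open in $\Q_p \langle \x \rangle^d$ (it is the unit ball in an ultrametric Banach space, hence clopen), it suffices to prove openness in $\Z_p \langle \x \rangle^d$. Fix $f \in \Diff^{an}(\Z_p^d)$ with Tate-analytic inverse $f^{-1}$, and pick any real number $c > \frac{1}{p-1}$. For $g \in \Z_p \langle \x \rangle^d$ with $\norm{g - f} \leq \abs{p}^c$, the key computation is
\[
g \circ f^{-1} = \id + (g-f) \circ f^{-1}.
\]
By Proposition~\ref{truc1}(1), $\norm{(g-f) \circ f^{-1}} \leq \norm{g-f} \leq \abs{p}^c$, so $g \circ f^{-1} \equiv \id \mod p^c$. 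By Bell--Poonen, $g \circ f^{-1}$ is a Tate-analytic diffeomorphism; since $\Diff^{an}(\Z_p^d)$ is a group under composition, $g = (g \circ f^{-1}) \circ f$ is too. Thus the closed ball of radius $\abs{p}^c$ around $f$ lies in $\Diff^{an}(\Z_p^d)$, proving openness.

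For the structure of $\Diff^{an}_c(\Z_p^d)$ with $c > \frac{1}{p-1}$, observe that by Bell--Poonen every $f \in \Z_p \langle \x \rangle^d$ with $f \equiv \id \mod p^c$ is automatically a Tate-analytic diffeomorphism, so
\[
\Diff^{an}_c(\Z_p^d) = \{f \in \Z_p \langle \x \rangle^d : \norm{f - \id} \leq \abs{p}^c\}.
\]
Since the Gauss norm on $\Z_p \langle \x \rangle^d$ only takes values in $\{\abs{p}^n : n \in \Z_{\geq 0}\} \cup \{0\}$, this ball coincides with $\id + p^n \cdot \Z_p \langle \x \rangle^d$ where $n := \lceil c \rceil$. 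The affine map $h \mapsto \id + p^n h$ from $\Z_p \langle \x \rangle^d$ onto $\Diff^{an}_c(\Z_p^d)$ is then an analytic diffeomorphism, with inverse $f \mapsto (f - \id)/p^n$.

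Finally, the topology on $\Diff^{an}(\Z_p^d)$ is the restriction of the one induced by the Gauss norm, so a neighborhood basis of $\id$ inside $\Q_p \langle \x \rangle^d$ is given by the closed balls $\{f : \norm{f - \id} \leq \abs{p}^n\}$, $n \in \Z_{\geq 0}$. Intersecting with $\Diff^{an}(\Z_p^d)$ and using Bell--Poonen once more, for $n > \frac{1}{p-1}$ this intersection is precisely $\Diff^{an}_n(\Z_p^d)$, so the family $\{\Diff^{an}_c(\Z_p^d) : c > \frac{1}{p-1}\}$ is a basis of neighborhoods of $\id$. The argument is essentially a packaging of Bell--Poonen; I expect no serious obstacle, the only minor point being the passage between the real parameter $c$ and its integer truncation $\lceil c \rceil$ when identifying the ball with $\Z_p \langle \x \rangle^d$.
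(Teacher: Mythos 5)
Your argument is correct and follows the same route as the paper's (very terse) proof: use Bell--Poonen to identify $\Diff^{an}_c(\Z_p^d)$ with the ball of radius $\abs{p}^c$ around $\id$ in $\Z_p\langle\x\rangle^d$, then use Proposition~\ref{truc1} to push that ball to a ball around an arbitrary $f$. The only (cosmetic) difference is that the paper asserts the ball around $f$ lies in $f\circ\Diff^{an}_c(\Z_p^d)$, whereas your exact identity $g\circ\inv f - \id = (g-f)\circ\inv f$ places it in $\Diff^{an}_c(\Z_p^d)\circ f$; by normality of $\Diff^{an}_c$ (Corollary~\ref{corollary:SubgroupsBasisOfNeighbourhoods}) these are the same, and either suffices for openness.
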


\begin{proof}
  Theorem \ref{theoremBellPoonen} shows that~$\Diff^{an}_c (\Z_p^d)$~is the ball of center~$\id$~and radius~$\abs p^c$~in~$\Z_p
  \langle \x \rangle^d$, using Proposition \ref{truc1} we see that for every~$f \in \Diff^{an}(\Z_p^d)$, the ball of
  center~$f$~and radius~$\abs p^c$~is included in~$f \circ \Diff^{an}_c(\Z_p^d)$~therefore it is an open set of~$\Q_p \langle \x
  \rangle^d$, so~$\Diff^{an}(\Z_p^d)$~is an infinite dimensional analytic manifold over~$\Q_p$.
\end{proof}

\paragraph{The implicit function theorem.--}
Let~$X, Y, Z$~be manifolds over~$\K$~and let~$f: X \times Y \rightarrow Z$~be an analytic map. Let~$(a,b) \in X \times Y$, we write~$D_{(a,b)} f$~the differential map of~$f$~at~$(a,b)$~and
let~$D_{(a,b)}^{(1)} f$~be the differential of the partial map~$x \in X \mapsto f(x, b)$~at~$a$~and~$D_{(a,b)}^{(2)} f$~the
differential of the partial map~$y \in Y \mapsto f(a, y)$~at~$b$. Then, one has~$T_{(a,b)} X \times Y = T_a X \times T_b Y$
and~$D_{(a,b)}f (u,v) = D_{(a,b)}^{(1)}f \cdot u + D_{(a,b)}^{(2)} f \cdot v$.

\begin{thm}[Implicit function theorem, 5.6.1 of \cite{bourbaki2007varietes}]
  Suppose that~$D_{(a,b)}^{(2)}f$~is bijective, then there exists an open neighbourhood~$U$~of~$a$~in~$X$~and an open
  neighbourhood~$V$~of~$b$~in~$Y$~and a unique analytic map~$g: U \rightarrow V$~such that
  \[ \forall x \in U, \quad f(x, g(x)) = f(a,b) \]
  and the differential of~$g$~at any~$x \in \mathcal U$~is given by
  \[ D_x g = - \inv{\left( D_{(x,g(x))}^{(2)} f \right)} \circ D_{(x,g(x))}^{(1)} f \]
\end{thm}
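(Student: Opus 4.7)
My plan is to reduce the implicit function theorem to an inverse function theorem in the ultrametric Banach-space category. Working in local charts and translating, I may assume that $X$, $Y$, $Z$ are open neighbourhoods of $0$ in Banach spaces $E_1, E_2, E_3$ over $\K$, and that $a = 0$, $b = 0$, $f(a,b) = 0$. I would then introduce the auxiliary analytic map
\[
F \colon X \times Y \longrightarrow X \times Z, \qquad F(x,y) := (x, f(x,y)).
\]
Its differential at the origin sends $(u,v)$ to $(u,\, D^{(1)}_{(0,0)} f \cdot u + D^{(2)}_{(0,0)} f \cdot v)$, which admits a continuous two-sided inverse by block inversion, since $D^{(2)}_{(0,0)} f$ is a bijective continuous linear map whose inverse is continuous (Banach's open mapping theorem).

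Next, I would invoke the analytic inverse function theorem for Banach spaces over $\K$ (a non-archimedean, infinite-dimensional analogue of Proposition~\ref{ExistenceInverse}, proved in \cite{bourbaki2007varietes}) to obtain open neighbourhoods $W$ of $(0,0)$ in $E_1 \times E_2$ and $W'$ of $(0,0)$ in $E_1 \times E_3$ together with an analytic inverse $F^{-1} \colon W' \to W$. Because $F$ fixes the first coordinate, $F^{-1}$ must have the shape $F^{-1}(x,z) = (x, h(x,z))$ for some analytic $h$. Setting $U := \{x \in E_1 : (x,0) \in W'\}$ (open) and $g(x) := h(x,0)$ yields an analytic map satisfying $f(x, g(x)) = f(a,b)$ on $U$. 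Uniqueness is built in: any continuous solution whose graph stays in $W$ is carried by $F$ onto the slice $\{(x,0)\} \subset W'$, hence must agree with $g$. The differential formula then follows by differentiating the identity $f(x,g(x)) = f(a,b)$ via the chain rule to get
\[
D^{(1)}_{(x,g(x))} f + D^{(2)}_{(x,g(x))} f \circ D_x g = 0,
\]
and inverting $D^{(2)}_{(x,g(x))} f$, which remains bijective after shrinking $U$ by continuity of the differential.

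The hard part will be justifying the infinite-dimensional inverse function theorem: Proposition~\ref{ExistenceInverse} as stated handles only formal power series in finitely many variables, whereas the manifolds here may be modelled on infinite-dimensional Banach spaces such as $\Q_p \langle \x \rangle^d$. One must produce the local inverse of $F$ as a convergent power series via a Banach fixed-point iteration on a small polydisk, and show that continuity of $(D^{(2)}_{(0,0)} f)^{-1}$ furnishes uniform norm estimates on the coefficients so that the resulting series is genuinely analytic on an open neighbourhood, not merely formally well-defined. The ultrametric inequality simplifies these estimates considerably compared with the archimedean case, but the verification is the real technical content imported from \cite{bourbaki2007varietes}.
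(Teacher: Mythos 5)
The paper does not prove this theorem: it is stated as a citation to Bourbaki, \emph{Vari\'et\'es diff\'erentielles et analytiques}, 5.6.1, and is invoked as a black box in the proof of Proposition~\ref{PropInvIsAnalytic}. There is therefore no internal proof to compare against. That said, your reconstruction is correct and follows exactly the route that Bourbaki takes: form the auxiliary map $F(x,y)=(x,f(x,y))$, observe that its differential at $(a,b)$ is block-triangular with invertible diagonal (using that the group of invertible continuous operators on a Banach space is open, which in the ultrametric case follows from the Neumann series as usual), apply the Banach-space analytic inverse function theorem, read off $g$ from the second component of $F^{-1}(\cdot,0)$, and obtain the differential formula by the chain rule together with continuity of $D^{(2)}f$. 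You correctly identify the technical weight: Proposition~\ref{ExistenceInverse} in the paper is a finite-dimensional statement about formal power series over $\Z_p$, and the infinite-dimensional inverse function theorem over a non-archimedean Banach space (which is what is needed, since the charts of $\Diff^{an}(\Z_p^d)$ are modelled on $\Q_p\langle\x\rangle^d$) has to be built from a fixed-point iteration with norm control on the coefficients, precisely the content outsourced to Bourbaki. Two very small points worth tightening: you should shrink $U$ so that $g(U)\subset V$ (the statement asserts $g:U\to V$), and the uniqueness clause should be read as uniqueness among maps $g:U\to V$ with $g(a)=b$, which your ``graph stays in $W$'' argument does deliver once $V$ is taken small enough to force the graph into $W$.
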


\begin{prop}\label{PropInvIsAnalytic}
  The inversion map~$\Inv : f \in \Diff^{an}(\Z_p^d) \mapsto \inv f$~is analytic.
\end{prop}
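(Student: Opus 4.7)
The plan is to derive analyticity of $\Inv$ from the implicit function theorem applied to the composition map. Define
\[
  F : \Diff^{an}(\Z_p^d) \times \Diff^{an}(\Z_p^d) \longrightarrow \Q_p \langle \x \rangle^d, \qquad F(f,g) := f \circ g - \id.
\]
By Proposition \ref{PropCompositionIsAnalytic} composition is analytic, so $F$ is analytic on the open set $\Diff^{an}(\Z_p^d) \times \Diff^{an}(\Z_p^d) \subset \Q_p\langle \x \rangle^d \times \Q_p\langle \x \rangle^d$. Fix $f_0 \in \Diff^{an}(\Z_p^d)$ and set $g_0 := f_0^{-1}$, so that $F(f_0, g_0) = 0$.

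Next I would identify the partial differential $D^{(2)}_{(f_0,g_0)} F$ with respect to the second variable. A term-by-term expansion of $f_0 \circ (g_0 + \varepsilon h)$, of exactly the same flavour as the computation in the proof of Proposition \ref{PropCompositionIsAnalytic}, shows that $D^{(2)}_{(f_0,g_0)} F$ is the continuous $\Q_p$-linear map
\[
  h \in \Q_p\langle \x \rangle^d \;\longmapsto\; \bigl( \x \mapsto (Df_0)(g_0(\x)) \cdot h(\x) \bigr),
\]
namely left-multiplication by the matrix $M(\x) := (Df_0)(g_0(\x)) \in M_d(\Z_p \langle \x \rangle)$.

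The key observation is that $M(\x)$ is invertible in $M_d(\Z_p\langle\x\rangle)$. Indeed, differentiating the identity $f_0 \circ f_0^{-1} = \id$ by the chain rule gives $M(\x) \cdot (D f_0^{-1})(\x) = I_d$, and $D f_0^{-1}$ is already a matrix of elements of $\Z_p\langle\x\rangle$ because $f_0^{-1} \in \Diff^{an}(\Z_p^d)$. Hence multiplication by $M(\x)^{-1}$ provides a bounded inverse to $D^{(2)}_{(f_0,g_0)} F$, which is therefore a topological linear isomorphism of $\Q_p\langle\x\rangle^d$.

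The implicit function theorem (Theorem 5.6.1 of \cite{bourbaki2007varietes}) then yields an open neighbourhood $U$ of $f_0$ in $\Diff^{an}(\Z_p^d)$ and a unique analytic map $\Phi: U \to \Diff^{an}(\Z_p^d)$ with $\Phi(f_0) = g_0$ and $f \circ \Phi(f) = \id$ on $U$. Since $\Diff^{an}(\Z_p^d)$ is a group, the right inverse $\Phi(f)$ coincides with the two-sided inverse $f^{-1}$, so $\Inv$ agrees with $\Phi$ on $U$, and is analytic at $f_0$. As $f_0$ was arbitrary, $\Inv$ is analytic on $\Diff^{an}(\Z_p^d)$. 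The only delicate step is the continuous invertibility of $D^{(2)} F$ in this infinite-dimensional Banach analytic setting, and it is precisely the observation that $f_0^{-1}$ being itself Tate-analytic forces $M(\x)^{-1}$ to lie in $M_d(\Z_p\langle\x\rangle)$.
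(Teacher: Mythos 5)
Your proof is correct and, at heart, takes the same route as the paper: apply the implicit function theorem to the composition map. The difference is tactical. The paper first observes that $\Inv = L_{f^{-1}} \circ \Inv \circ R_{f^{-1}}$, so it suffices to prove analyticity at $\id$, where $D^{(2)}_{(\id,\id)}(\text{Comp})=\id$ is obviously bijective and no computation is needed. You instead work directly at an arbitrary $(f_0,f_0^{-1})$, identify $D^{(2)}F$ explicitly as $h\mapsto (Df_0)(f_0^{-1})\cdot h$, and establish its continuous invertibility by differentiating $f_0 \circ f_0^{-1}=\id$; the key point, which you spot correctly, is that $Df_0^{-1}$ has entries in $\Z_p\langle\x\rangle$ precisely because $f_0^{-1}$ is itself Tate-analytic, so the inverse $M(\x)^{-1}=(Df_0^{-1})(\x)$ is a bounded operator. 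The translation trick buys brevity and sidesteps the Banach-invertibility check; your direct argument buys an explicit formula for the differential of $\Inv$ and makes visible exactly where the hypothesis $f_0^{-1}\in\Diff^{an}(\Z_p^d)$ enters. Both are sound.
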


\begin{proof}
  We write~$\mathcal U = \Diff^{an}(\Z_p^d)$, we know that~$\U$~is an analytic manifold over~$\Q_p$~by
  Proposition \ref{PropDiffAnIsAnAnalyticVariety}.
   By Proposition \ref{PropCompositionIsAnalytic}, the
  composition operation is analytic over~$\Z_p \langle \x \rangle^d \times \Z_p \langle \x \rangle^d$, therefore it is
  over~$\mathcal U \times \mathcal U$.

  To show that~$\Inv$~is analytic we only need to show that it is
  analytic at~$\id$. Indeed, take~$f \in \mathcal U$, then~$\Inv = L_{\inv f} \circ \Inv
  \circ R_{\inv  f}$~where~$R_{\inv f}$~is composition on the right by~${\inv f}$~and~$L_{\inv f}$~composition on
  the left. Since~$L_{\inv f}$ and $R_{\inv f}$ are analytic,~$\Inv$~is analytic at~$f$~if and only if it is analytic at~$\id$.
  To show that~$\Inv$~is analytic at~$\id$, we use the implicit function theorem, since the map~$M: (f,g)
  \in \U \times \U \rightarrow f \circ g \in \U$
  is analytic and the partial differential~$D_{\id, \id}^{(2)} M = \id$, one has the existence of a unique function~$G:
  \mathcal V \rightarrow \mathcal \U$~with~$\mathcal V$~an open neighbourhood of~$\id$~such that~$G$~is analytic at
  ~$\id$~and~$M(f, G(f)) = \id$~for all~$f \in \mathcal V$. Therefore~$\Inv_{|\mathcal V} = G$~and inversion is analytic
  at~$\id$.
\end{proof}

\subsection{$p$-adic Lie groups}
We refer to \cite{Bourbaki06} for more details on the results provided in this section.

A~$p$-adic Lie group~$G$~is a topological group with a structure of a~$p$-adic analytic manifold such that the
multiplication map and the inverse map are analytic. The dimension of~$G$~is its dimension as an analytic manifold. It
can  be infinite. Its \emph{Lie algebra}~$\g$~is the tangent space of~$G$~at the neutral element, it is equipped with a
Lie bracket~$[\cdot , \cdot]$~defined as follows. Let~$g \in G$~and~$\iota_g: h \in G \mapsto g h \inv g$, then~$\Ad (g)
:= D_e \iota_g \in \GL(\g)$~is the adjoint representation of~$G$. Define~$\ad := D_e \Ad$, then
\[
  \forall \X, \Y \in \g, [\X, \Y] := \ad(\X)(\Y).
\]

\begin{thm}\label{theoremDiffAnIsALieGroup}
  The topological group~$\Diff^{an} (\Z_p^d)$~is an infinite-dimensional Lie group over~$\Q_p$. Its Lie
  Algebra is~$\Theta(\Z_p^d)$.

  Moreover, the subgroups~$\Diff^{an}_c (\Z_p^d)$~are also Lie groups for~$c > \frac{1}{p-1}$~and they form a basis of
  neighbourhood of~$\id$~in~$\Diff^{an}_c(\Z_p^d)$.
\end{thm}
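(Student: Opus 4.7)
The plan is to assemble the three ingredients already established (manifold structure, analyticity of composition, analyticity of inversion) into the Lie group statement, and then identify the Lie algebra by a direct computation of the adjoint action.

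First I would observe that Proposition \ref{PropDiffAnIsAnAnalyticVariety} exhibits $\Diff^{an}(\Z_p^d)$ as an open subset of the Banach space $\Q_p\langle \x\rangle^d$, so it is an analytic manifold modelled on $\Q_p\langle \x\rangle^d$. Proposition \ref{PropCompositionIsAnalytic} gives analyticity of composition on the integer model; since $\Diff^{an}(\Z_p^d) \subset \Z_p\langle\x\rangle^d$ is open, restriction yields the analyticity of multiplication in the group. Analyticity of inversion is exactly Proposition \ref{PropInvIsAnalytic}. Together these give the structure of an infinite-dimensional $p$-adic Lie group over $\Q_p$.

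Next I would identify the Lie algebra. Since $\Diff^{an}(\Z_p^d)$ is an open subset of $\Q_p\langle \x\rangle^d$, the tangent space at $\id$ is canonically $\Q_p\langle \x\rangle^d$, which we read as the space of Tate-analytic vector fields $\X = \sum_i u_i(\x)\partial_i$, i.e. $\Theta(\Z_p^d)$. For the bracket, I would compute $\Ad(g)(\Y)$ for $g\in\Diff^{an}(\Z_p^d)$ and $\Y\in\Theta(\Z_p^d)$: writing $h_s = \id + s\Y + O(s^2)$ and differentiating $\iota_g(h_s) = g\circ h_s \circ g^{-1}$ at $s=0$ gives
\[
\Ad(g)(\Y)(\x) = D_{g^{-1}(\x)}g \cdot \Y(g^{-1}(\x)).
\]
Then taking a curve $g_t = \id + t\X + O(t^2)$ and differentiating at $t=0$ produces, after a short expansion,
\[
\ad(\X)(\Y)(\x) = D_\x \X \cdot \Y(\x) - D_\x \Y \cdot \X(\x),
\]
which in coordinates matches exactly the bracket $[\X,\Y]_j = \sum_i (u_i \partial v_j/\partial x_i - v_i \partial u_j/\partial x_i)$ given in Section \ref{SecAnalyticDiffeo}. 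So the Lie algebra is $\Theta(\Z_p^d)$ with its standard bracket.

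For the subgroups $\Diff^{an}_c(\Z_p^d)$ with $c>\tfrac{1}{p-1}$, Theorem \ref{theoremBellPoonen} shows that $\Diff^{an}_c(\Z_p^d)$ is precisely the closed ball of radius $\abs p^c$ around $\id$ inside $\Z_p\langle \x\rangle^d$. By Proposition \ref{truc1}, left translation is an isometry, so every such ball is open in $\Diff^{an}(\Z_p^d)$ and the $\Diff^{an}_c(\Z_p^d)$ form a basis of clopen neighbourhoods of $\id$. As open subgroups of a Lie group they inherit the analytic manifold structure (each is analytically diffeomorphic to the Banach space $\Z_p\langle \x\rangle^d$ after the translation $f\mapsto f-\id$) and multiplication/inversion remain analytic.

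The main obstacle will be the Lie bracket computation: one must carry it out within the framework of analytic maps between Banach spaces of possibly infinite dimension and justify the order-$s$ expansions using the continuity estimates from Proposition \ref{truc1} and the analyticity of composition from Proposition \ref{PropCompositionIsAnalytic}. Everything else (openness, group-axiom analyticity, the passage from tangent vectors to vector fields) is formal once Propositions \ref{PropDiffAnIsAnAnalyticVariety}, \ref{PropCompositionIsAnalytic} and \ref{PropInvIsAnalytic} are in hand.
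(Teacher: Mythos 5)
Your proof follows essentially the same skeleton as the paper's: assemble Propositions \ref{PropDiffAnIsAnAnalyticVariety}, \ref{PropCompositionIsAnalytic} and \ref{PropInvIsAnalytic} to get the Lie group structure, identify $T_{\id}\Diff^{an}(\Z_p^d)$ with $\Theta(\Z_p^d)$, and recover the vector field bracket by differentiating the adjoint action; the treatment of $\Diff^{an}_c(\Z_p^d)$ as balls around $\id$ is also what the paper does. The one real variation is in the bracket computation: you differentiate along the straight segments $\id + s\Y$, whereas the paper uses the global Tate-analytic flows $\Phi^\X_s,\Phi^\Y_t$ of Proposition \ref{PropExistenceGlobalTateAnalyticFlow} and computes $\frac{\partial}{\partial s}\big|_{s=0}\frac{\partial}{\partial t}\big|_{t=0}\Phi^\X_{-s}\circ\Phi^\Y_t\circ\Phi^\X_s$. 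Your route is a little more elementary (no appeal to flows) but requires the small extra check that $\id+s\Y$ lies in $\Diff^{an}(\Z_p^d)$ for $|s|$ small enough, e.g.\ via Proposition \ref{ExistenceInverse} or Theorem \ref{theoremBellPoonen}.

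One point worth correcting: your expansion gives $\ad(\X)(\Y) = D_\x\X\cdot\Y - D_\x\Y\cdot\X$, whose $j$-th component is $\sum_i\bigl(v_i\,\partial u_j/\partial x_i - u_i\,\partial v_j/\partial x_i\bigr)$; this is the \emph{negative} of the bracket $[\X,\Y]_j = \sum_i\bigl(u_i\,\partial v_j/\partial x_i - v_i\,\partial u_j/\partial x_i\bigr)$ defined in Section \ref{SecAnalyticDiffeo}, not an exact match. This is the standard fact that for a diffeomorphism group acting on the left, $\ad$ is the opposite of the Lie derivative of vector fields (the paper's own derivation has the same sign slip, writing $\iota_{\Phi^\X_s}$ where $\iota_{\Phi^\X_{-s}}$ should appear). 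It does not affect the theorem --- the Lie algebra is $\Theta(\Z_p^d)$ with the bracket determined up to this conventional sign --- but the phrase ``matches exactly'' should be softened.
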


\begin{proof}
  The fact that~$\Diff^{an} (\Z_p^d)$~is a Lie group over~$\Q_p$~follows from Propositions
  \ref{PropCompositionIsAnalytic}, \ref{PropDiffAnIsAnAnalyticVariety} and \ref{PropInvIsAnalytic} where
  it was shown that it was an analytic manifold and that composition and inversion are analytic maps. The statement for
 ~$\Diff^{an}_c(\Z_p^d)$~follows from the same propositions.

  The tangent space at~$\id$~is~$\Q_p \langle \x \rangle^d~$~that we identify with~$\Theta(\Z_p^d)$~and under this
  identification the Lie bracket between two Tate-analytic vector fields corresponds to the Lie bracket of the Lie
  algebra of the Lie group~$\Diff^{an}(\Z_p^d)$~because if~$\X, \Y$~are of norm~$\leq \abs p^c$ with $c >
  \frac{1}{p-1}$, then they admit global
  Tate-analytic flows~$\Phi^\X$~and~$\Phi^\Y$~by Proposition \ref{PropExistenceGlobalTateAnalyticFlow} and

  \begin{eqnarray*}
    [\X, \Y] &= \frac{\partial }{\partial_s}_{|s=0} \frac{\partial }{\partial t}_{|t=0} \Phi^\X_{-s} \circ \Phi^\Y_t
    \circ \Phi^\X_s \\
    &= \frac{\partial }{\partial_s}_{|s=0} \frac{\partial }{\partial t}_{|t=0} \iota_{\Phi^\X_s}
    (\Phi^\Y_t) \\
    &=  D_{\id} \Ad(\X) (\Y) = \ad(\X) (\Y).
  \end{eqnarray*}

  On the other hand, if~$f,g \in \Diff^{an}_c(\Z_p^d)$~with~$c > \frac{1}{p-1}$
  , then~$\frac{\partial}{\partial s}_{|s=0} \frac{\partial}{\partial t}_{|t=0}
  \Phi^f_{-s} \circ \Phi^g_t \circ \Phi^f_s = [\X_f, \X_g] = \ad\X_f (\X_g)$.
\end{proof}

\begin{rmq}
  Since Bell-Poonen theorem holds for any ultrametric field, the same proof shows that $\Diff^{an}(\D_p^d)$ is a Lie
  group over~$\C_p$. In fact, for any complete extension~$\K$~of~$\Q_p$~with unit ball~$\mathbf A$, the
  group~$\Diff^{an}(\mathbf A^d)$~is a Lie group over~$\K$.
\end{rmq}

\begin{thm}[\cite{Bourbaki06}, \S 8, Theorem 1]\label{PropContinousMorphismIsAnalytic}
  Let~$G, H$~be Lie groups over~$\Q_p$~and~$\phi: G \rightarrow H$~be a continuous homomorphism of topological groups.
  Then,~$\phi$~is analytic and therefore a homomorphism of Lie groups.
\end{thm}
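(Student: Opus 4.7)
The plan is to show analyticity at the identity, since the homomorphism property then transports analyticity to every point by translation ($\phi = L_{\phi(g)^{-1}}^{-1} \circ \phi \circ L_{g^{-1}}^{-1}$ near $g$, with $L_g$ the left-translation in $G$, which is already analytic). Fix exponential charts: pick an open neighborhood $V_G \subset \g$ of $0$ and $V_H \subset \h$ of $0$ on which the Baker--Campbell--Hausdorff series converges, together with analytic diffeomorphisms $\exp_G: V_G \to U_G$ and $\exp_H: V_H \to U_H$ conjugating group multiplication to the BCH product. After shrinking $V_G$ I can assume $\phi(U_G) \subset U_H$. Define $\psi := \exp_H^{-1} \circ \phi \circ \exp_G : V_G \to V_H$; the goal reduces to proving that $\psi$ is analytic on a neighborhood of $0$ and, in fact, equals the restriction of a continuous $\Q_p$-linear map $\g \to \h$.

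The first key step is the one-variable case: any continuous group homomorphism $f: (\Z_p, +) \to H$ is analytic. To prove this, shrink the domain so that $f(\Z_p)$ lies inside $U_H$, lift it via $\exp_H^{-1}$ to a continuous map $g: \Z_p \to V_H$ with $g(0) = 0$, and observe that the BCH law combined with the homomorphism property forces $g(s+t) = g(s) *_{BCH} g(t)$. Because $g$ takes values in the normal $p$-adic subgroup $p^k V_H$ for some large $k$, iterating gives $g(n) = n \cdot g(1)$ for $n \in \Z$, and density plus continuity extend this to $g(t) = t\, g(1)$ for all $t \in \Z_p$. Thus $f(t) = \exp_H(t\, g(1))$ is analytic.

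Applying this to each $X \in V_G$ and the one-parameter subgroup $t \mapsto \phi(\exp_G(tX))$ produces an element $D\phi(X) \in \h$ such that $\phi(\exp_G(tX)) = \exp_H(t\, D\phi(X))$ for all $t \in \Z_p$, in particular $\psi(X) = D\phi(X)$. $\Z_p$-homogeneity $D\phi(tX) = t\, D\phi(X)$ is immediate. Continuity of $D\phi$ follows from continuity of $\phi$ together with the fact that $\exp_G, \exp_H^{-1}$ are local homeomorphisms. The map then extends uniquely to a continuous $\Q_p$-homogeneous map $\g \to \h$ by using that $\g = \bigcup_k p^{-k} V_G$.

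The main obstacle is additivity of $D\phi$. Here I would use the BCH identity: for $X, Y \in V_G$ sufficiently small and $t \in \Z_p$ with $|t|$ small enough,
\[
\exp_G(tX)\cdot \exp_G(tY) = \exp_G\bigl(tX + tY + \tfrac{t^2}{2}[X,Y] + \cdots\bigr).
\]
Apply $\phi$ to both sides, invoke $\psi = D\phi$ on each one-parameter subgroup, and use the BCH formula in $H$. Writing the right-hand side as $\exp_G(t(X+Y) + t^2 R(t,X,Y))$ with $R$ analytic, and using the established $\Z_p$-homogeneity in the parameter $t$, one obtains an identity of the form
\[
D\phi\bigl(t(X+Y) + t^2 R(t,X,Y)\bigr) = t\bigl(D\phi(X) + D\phi(Y)\bigr) + t^2 S(t,X,Y),
\]
with $S$ bounded in $t$. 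Dividing by $t$ and letting $|t| \to 0$ in $\Q_p$ (using continuity of $D\phi$) yields $D\phi(X+Y) = D\phi(X) + D\phi(Y)$. Once $D\phi$ is shown to be continuous and $\Q_p$-linear, the identity $\psi = D\phi|_{V_G}$ makes $\psi$ the restriction of a continuous linear map, hence analytic, so $\phi = \exp_H \circ \psi \circ \exp_G^{-1}$ is analytic near the identity. Translation then gives analyticity globally, and $\phi$ is a morphism of $p$-adic Lie groups whose differential at $e$ is precisely $D\phi$.
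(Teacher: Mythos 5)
The paper does not give its own proof of this statement; it simply cites Bourbaki (\emph{Lie Groups and Lie Algebras}, Ch.~III, \S 8, Th.~1). Your argument is correct and reproduces the standard Bourbaki-style proof: reduce to the identity, pass to exponential charts where multiplication is the Baker--Campbell--Hausdorff law, prove the one-variable case for continuous homomorphisms $(\Z_p,+)\to H$ via $g(n)=n\,g(1)$ and density of $\Z$ in $\Z_p$, and then obtain $\Z_p$-homogeneity and additivity of the induced tangent map to conclude that $\psi$ is the restriction of a continuous $\Q_p$-linear map. You correctly isolate the decisive ingredient (density of $\Z$, equivalently $\Q$, in the base field), which is exactly what the remark following the theorem in the paper emphasizes as the reason the statement fails over a proper finite extension of $\Q_p$. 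One small slip: the translation identity should read $\phi = L_{\phi(g)}\circ\phi\circ L_{g^{-1}}$ rather than $L_{\phi(g)^{-1}}^{-1}\circ\phi\circ L_{g^{-1}}^{-1}$, but this does not affect the argument.
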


\begin{rmq}
  The proof relies heavily on~$\Q$~being dense in~$\Q_p$~and the theorem is false if we replace
 ~$\Q_p$~by any finite extension of~$\Q_p$. Indeed, suppose for example that~$K = \Q_p (\sqrt \alpha)$~is
  a quadratic extension. Any element~$z$~of~$\K$~is of the form~$z = x + \sqrt \alpha y$. Then, the function
  \[ f: z = x + \sqrt \alpha y \mapsto x - \sqrt \alpha y \]
  is a continuous group homomorphism, it is~$\Q_p$-analytic but not~$\K$-analytic as~$f_{|1 \cdot
  \Q_p} = \id$~and~$f_{|\sqrt \alpha \cdot \Q_p} = - \id$.
\end{rmq}

Let~$\Gamma$~be a finitely generated group, the pro-$p$ completion~$\Gamma_p$~of~$\Gamma$~is the projective limit of the
quotient of~$\Gamma$~that are finite~$p$-groups, it is a topological group with respect to the profinite topology. In
particular, for any~$\gamma \in \Gamma$, the group homomorphism~$n
\in \Z \mapsto \gamma^n \in \Gamma$~extends uniquely to a continuous group homomorphism~$t \in \Z_p \mapsto \gamma^t \in
\Gamma_p$. In the context of Tate-analytic diffeomorphisms, if~$p \geq 3$~and~$f \equiv \id \mod p$, then the extension
$n \in \Z \mapsto f^n \in \Diff^{an}_1 (\Z_p^d)$~is the Tate-analytic flow~$t \in \Z_p \mapsto \Phi_t^f \in
\Diff^{an}(\Z_p^d)$~associated to~$f$~given by Bell-Poonen theorem.

\begin{prop}\label{PropEmbeddingOfProPcompletion}
  Let~$p$~be a prime, let $c>0$ be such that $c > \frac{1}{p-1}$ and let~$G$~be a compact Lie group over~$\Q_p$. Let
  $\Gamma$~be a finitely generated subgroup of~$G$~such that~$G$~is the pro-$p$-completion of~$\Gamma$ and let~$\iota:
  \Gamma \rightarrow \Diff^{an}_c (\Z_p^d)$~be a group homomorphism, then~$\iota$~extends
  uniquely to a Lie group homomorphism~$\iota: G \rightarrow \Diff^{an}_c (\Z_p^d)$~such that for all~$t \in \Z_p$,
  all~$g \in \Gamma$,~$\iota(g^t) = \iota(g)^t$~and the map~$(t, \x) \in \Z_p \times \Z_p^d \mapsto \iota(g)^t (\x)$~is
  analytic.
\end{prop}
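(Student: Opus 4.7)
The plan is to show that $\iota$ is uniformly continuous as a map from $\Gamma$ (with the subspace topology from $G$) into the complete metric space $\Diff^{an}_c(\Z_p^d)$, extend it by continuity to $G$, and then invoke Theorem \ref{PropContinousMorphismIsAnalytic} to promote the extension to an analytic Lie group homomorphism. The compatibility with Bell-Poonen flows will drop out from a uniqueness argument.

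\textbf{Step 1 (Continuity at the identity).} By Theorem \ref{theoremDiffAnIsALieGroup}, a basis of neighborhoods of $\id$ in $\Diff^{an}_c(\Z_p^d)$ is given by the open normal subgroups $\Diff^{an}_{c'}(\Z_p^d)$ for $c' > c$, so it suffices to exhibit, for every such $c'$, an open subgroup $U \subset G$ with $\iota(\Gamma \cap U) \subset \Diff^{an}_{c'}(\Z_p^d)$. Define recursively the characteristic subgroups $\Gamma^{(1)} := \Gamma$ and $\Gamma^{(i+1)} := \langle \gamma^p,\ [\gamma,\delta] : \gamma \in \Gamma^{(i)},\ \delta \in \Gamma \rangle$. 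Since $\Gamma$ is finitely generated, each quotient $\Gamma/\Gamma^{(i)}$ is a finite $p$-group, so by the universal property of the pro-$p$ completion $\Gamma^{(i)}$ corresponds to a unique open normal subgroup $U_i \subset G$ with $\Gamma \cap U_i = \Gamma^{(i)}$. Using that $\iota$ is a homomorphism, Lemma \ref{lemma:PuissanceCongruence} (which gives $f \in \Diff^{an}_{c''} \Rightarrow f^p \in \Diff^{an}_{c''+1}$ in the regime $c'' > 1/(p-1)$), and a direct Gauss-norm commutator estimate $\|[f,g] - \id\| \leq \|f - \id\| \cdot \|g - \id\|$ that follows from Proposition \ref{truc1}, one proves by induction on $i$ that $\iota(\Gamma^{(i)}) \subset \Diff^{an}_{c+i-1}(\Z_p^d)$. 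Picking $i$ with $c + i - 1 \geq c'$ and $U = U_i$ then closes Step 1.

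\textbf{Step 2 (Extension and analyticity).} Since $\Gamma$ is dense in its pro-$p$ completion $G$, and $\Diff^{an}_c(\Z_p^d)$ is a closed subset of the complete Banach space $\Z_p\langle \x \rangle^d$ (Theorem \ref{theoremBellPoonen}, Proposition \ref{PropDiffAnIsAnAnalyticVariety}), the uniformly continuous homomorphism $\iota$ extends uniquely to a continuous map $\hat\iota : G \to \Diff^{an}_c(\Z_p^d)$; continuity of multiplication on both sides forces $\hat\iota$ to be a group homomorphism. By Theorem \ref{PropContinousMorphismIsAnalytic}, $\hat\iota$ is automatically analytic, hence a morphism of $\Q_p$-Lie groups.

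\textbf{Step 3 (Compatibility with Bell-Poonen flows).} For any $g \in \Gamma$, the two maps $t \in \Z_p \mapsto \hat\iota(g^t)$ and $t \mapsto \iota(g)^t$ (the latter being the unique Tate-analytic flow of $\iota(g)$ given by Theorem \ref{theoremBellPoonen}) are both continuous in $t$ and coincide on the subset $\Z \subset \Z_p$; by Corollary \ref{AnalyticContinuation} applied coordinate-wise, they therefore coincide on all of $\Z_p$. This yields $\hat\iota(g^t) = \iota(g)^t$, and the joint analyticity of $(t,\x) \mapsto \iota(g)^t(\x)$ is exactly the Tate-analytic regularity of the Bell-Poonen flow.

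\textbf{Main obstacle.} The substantive content lies entirely in Step 1: one must match the algebraic filtration of $G$ (whose small neighborhoods of $e$ correspond to finite-index subgroups of $\Gamma$ with $p$-power quotient) with the analytic filtration of $\Diff^{an}_c(\Z_p^d)$ by the subgroups $\Diff^{an}_{c'}$. The bridge is the uniform pro-$p$ Lie group structure on $\Diff^{an}_c$ for $c > 1/(p-1)$, which ensures that $p$-th powers and commutators of elements of $\Diff^{an}_{c''}$ land strictly deeper in the filtration; this is what makes the inductive estimate $\iota(\Gamma^{(i)}) \subset \Diff^{an}_{c+i-1}$ possible.
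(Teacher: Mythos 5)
Your proposal is correct in spirit but takes a genuinely different route from the paper, which simply cites Theorem 2.11 of \cite{cantat2014algebraic} for the continuous extension of $\iota$ and then invokes Theorem \ref{PropContinousMorphismIsAnalytic} to upgrade it to a Lie group morphism. You instead give a self-contained proof of the continuous extension via the lower $p$-central series and Gauss-norm estimates, which is a useful unpacking of what the citation hides. A few calibration issues to flag. First, Lemma \ref{lemma:PuissanceCongruence} states $f \equiv \id \bmod p \Rightarrow f^{p^c} \equiv \id \bmod p^c$, which is not the one-step estimate $f \in \Diff^{an}_{c''} \Rightarrow f^p \in \Diff^{an}_{c''+1}$ that you invoke; the latter is true but needs its own (short) argument, and for $\tfrac{1}{p-1} < c'' < 1$ (possible when $p \geq 3$) the naive computation only yields $f^p \in \Diff^{an}_{\min(c''+1,\,2c'')}$. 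Similarly, the commutator bound $\|[f,g]-\id\| \leq \|f-\id\|\cdot\|g-\id\|$ does hold, but it requires the refined inequality $\|u \circ g - u\| \leq \|u\|\cdot\|g-\id\|$ rather than the coarser item 3 of Proposition \ref{truc1}; the refinement is easy but should be stated. As a consequence, the exact induction $\iota(\Gamma^{(i)}) \subset \Diff^{an}_{c+i-1}$ only closes when $c \geq 1$; for smaller $c$ one gets a slower but still unbounded exponent (e.g.\ $ci$), which is all you actually need for continuity, so the gap is cosmetic. Finally, in Step 3 invoking Corollary \ref{AnalyticContinuation} is both overkill and slightly delicate (it applies to Tate-analytic functions, and $t \mapsto \hat\iota(g^t)(\x)$ is a priori only analytic); it is cleaner to observe that $t \mapsto \hat\iota(g^t)$ and $t \mapsto \iota(g)^t$ are continuous homomorphisms $\Z_p \to \Diff^{an}_c(\Z_p^d)$ agreeing on the dense subgroup $\Z$, hence equal. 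What your approach buys is transparency: it makes visible the bridge between the algebraic filtration of the pro-$p$ completion and the analytic filtration of $\Diff^{an}_c$, which the paper outsources to \cite{cantat2014algebraic}.
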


\begin{proof}
  Theorem 2.11 of \cite{cantat2014algebraic} shows that~$\iota$~extends uniquely to a continuous map. In
  \cite{cantat2014algebraic} this is only shown when $p \geq 3$ and $c = 1$ but the proof is identical
  with $p \geq 2$ and $c > \frac{1}{p-1}$ at it is only required that the image of the elements of $\Gamma$ admits a
  Tate-analytic flow.
  Since~$G$~and~$\Diff^{an}_c (\Z_p^d)$~are both Lie groups over~$\Q_p$,~$\iota$~is automatically a Lie
  group homomorphism by Theorem \ref{PropContinousMorphismIsAnalytic}.
\end{proof}

\begin{thm}[\cite{Bourbaki06}, \S 8, Theorem 2]\label{theoremClosedSubgroupAreLieGroups}
  Let~$G$~be a finite-dimensional Lie group over~$\Q_p$, then every closed subgroup of~$G$~is a Lie subgroup of~$G$.
\end{thm}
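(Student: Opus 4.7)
The plan is to adapt Cartan's closed subgroup theorem to the $p$-adic analytic category. Since $G$ is finite-dimensional over $\Q_p$, standard $p$-adic Lie theory provides an exponential map $\exp : V \to U$, defined on an open neighborhood $V$ of $0$ in the Lie algebra $\g$, which is an analytic diffeomorphism onto an open neighborhood $U$ of $e$ in $G$. Given a closed subgroup $H \subset G$, the goal is to produce a Lie subalgebra $\h \subset \g$ such that $\exp$ carries some neighborhood of $0$ in $\h$ onto a neighborhood of $e$ in $H$; translating this chart by left multiplication then yields the desired analytic structure on $H$ as a Lie subgroup.

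First I would set
\[
\h := \left\{ X \in \g \,:\, \exp(tX) \in H \text{ for all } t \in \Z_p \text{ with } tX \in V \right\},
\]
and show that $\h$ is a Lie subalgebra of $\g$. The $p$-adic setting offers a nice simplification here compared with the real case: since $\Z$ is dense in $\Z_p$ and $H$ is closed in $G$, the condition $\exp(X) \in H$ at once implies $\exp(nX) = \exp(X)^n \in H$ for every $n \in \Z$, and then by continuity $\exp(tX) \in H$ for all $t \in \Z_p$. This gives stability of $\h$ under $\Z_p$-scalars, and by rescaling under $\Q_p$-scalars. For closure under addition and Lie bracket, I would invoke the $p$-adic Baker--Campbell--Hausdorff formula: on a sufficiently small ball in $\g$ one has $\exp(X)\exp(Y) = \exp(X + Y + \tfrac{1}{2}[X,Y] + \cdots)$, with the series converging $p$-adically. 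Applying this to $tX$ and $tY$ and letting $t$ range over a small neighborhood of $0$ shows that both $X+Y$ and $[X,Y]$ lie in $\h$ whenever $X$ and $Y$ do.

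The heart of the argument, and where I expect the main obstacle, is to show that $\exp(\h)$ covers a neighborhood of $e$ in $H$. I would pick a $\Q_p$-vector space complement $\mathfrak m$ of $\h$ in $\g$ and consider the analytic map
\[
\Psi : \h \times \mathfrak m \longrightarrow G, \qquad (X,Y) \longmapsto \exp(X)\exp(Y),
\]
whose differential at $(0,0)$ is the identity, so $\Psi$ is a local analytic diffeomorphism near the origin. If $\exp(\h)$ failed to cover a neighborhood of $e$ in $H$, one could find a sequence $h_n = \exp(X_n)\exp(Y_n) \in H$ with $X_n, Y_n \to 0$ and $Y_n \neq 0$. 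The delicate step is to extract from this a nonzero one-parameter direction in $\mathfrak m$ lying in $\h$: rescaling $Y_n$ by suitable $\lambda_n \in \Q_p$ to keep $\lambda_n Y_n$ in a fixed spherical shell of $\mathfrak m$ and passing to a subsequential limit $Y_\infty \in \mathfrak m \setminus \{0\}$, one must show that $\exp(tY_\infty) \in H$ for $t$ near $0$, producing the contradiction $Y_\infty \in \h \cap \mathfrak m = 0$. Verifying this limit statement requires approximating the scalars $\lambda_n$ by integers, controlling the BCH remainders $p$-adically, and using that $H$ is closed; this is exactly the place where finite-dimensionality of $G$ is indispensable, as the unit sphere in $\mathfrak m$ needs to be compact. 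Once this is settled, $\exp$ restricted to $\h \cap V$ gives an analytic chart for a neighborhood of $e$ in $H$, and left translation extends this to a global atlas.
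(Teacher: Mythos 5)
The paper states this as a direct citation of Bourbaki (\cite{Bourbaki06}, \S 8, Theorem~2) and offers no proof of its own, so there is no in-paper argument to compare against.

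Your preliminaries are sound, and you correctly isolate the mechanism specific to $\Q_p$ --- the same density phenomenon the paper invokes in the remark following Theorem~\ref{PropContinousMorphismIsAnalytic}: since $|n|_p \leq 1$ for every $n \in \Z$, $\exp(X) \in H$ forces $\exp(nX) = \exp(X)^n \in H$ with $nX$ still in the chart, and density of $\Z$ in $\Z_p$ together with closedness of $H$ upgrades this to $\exp(tX) \in H$ for all $t \in \Z_p$, i.e.\ to $X \in \h$. The BCH argument for closure of $\h$ under $+$ and $[\cdot,\cdot]$, using $\Q_p$-stability and closedness of $\h$, is also in order.

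However, the rescaling you propose as the ``heart'' is both unnecessary and, as written, non-convergent in the ultrametric setting. Unnecessary: after writing $h_n = \exp(X_n)\exp(Y_n)$ with $X_n \in \h$ and $Y_n \in \mathfrak{m}$, you already have $\exp(X_n) \in H$ (definition of $\h$), hence $\exp(Y_n) = \exp(X_n)^{-1} h_n \in H$, hence $Y_n \in \h$ by your own density observation, hence $Y_n \in \h \cap \mathfrak{m} = \{0\}$ --- contradiction, with no limit direction $Y_\infty$, no subsequence, and no compactness of a unit sphere. Non-convergent: holding $\|\lambda_n Y_n\|$ at a fixed positive value while $\|Y_n\| \to 0$ forces $|\lambda_n| \to \infty$; then for any nonzero $t \in \Z_p$ one eventually has $|t\lambda_n| > 1$, and the ultrametric inequality gives $|t\lambda_n - m| = |t\lambda_n|$ for every integer $m$, so $\|m_n Y_n - t Y_\infty\|$ stays bounded below by $|t|\,\|Y_\infty\|$ and $\exp(Y_n)^{m_n}$ never approaches $\exp(tY_\infty)$. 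The archimedean device $\lfloor t/\|Y_n\|\rfloor\,\|Y_n\| \to t$ has no $p$-adic counterpart; the density-of-$\Z$ observation is precisely what replaces it. Swap the rescaling for the direct two-line argument and your proof closes.
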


\begin{prop}[\cite{Bourbaki06}, \S 9, Corollary of Proposition 6]
  \label{PropOpenSubgroupDerivedSeries}
  Let~$G$~be a finite-dimensional Lie group over~$\Q_p$~and $\mathfrak g$ its Lie algebra, there exists an open subgroup
  ~$G_0$~of~$G$~such that for all~$i \geq 0$, the subgroups~$D^i(G_0)$~and~$D_i(G_0)$ are Lie subgroups with Lie algebra
  ~$\mathcal D^i (\h)$~and~$\mathcal D_i (\h)$~respectively.
\end{prop}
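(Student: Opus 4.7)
My plan is to use the Baker--Campbell--Hausdorff (BCH) formula together with the $p$-adic exponential map to set up a bijection between sufficiently small open $\Z_p$-Lie subalgebras of $\g$ and open subgroups of $G$, and then to verify that this bijection intertwines the derived and lower central series on each side.

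First, I would choose an open subgroup $G_0 \subset G$ and a $\Z_p$-Lie subalgebra $V \subset \g$ so that the exponential map $\exp \colon V \to G_0$ is an analytic isomorphism of $p$-adic manifolds, the BCH series $X \ast Y$ converges on $V \times V$ and takes values in $V$, and $\exp(X)\exp(Y) = \exp(X \ast Y)$ for all $X,Y \in V$. This is done by fixing any bracket-stable $\Z_p$-lattice in $\g$ and rescaling by a sufficiently high power of $p$; convergence is controlled by the factorial denominators appearing in the BCH coefficients, in the same spirit as the bound $c > 1/(p-1)$ used in Proposition \ref{PropExistenceGlobalTateAnalyticFlow}. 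That such an exponential map exists as a local analytic diffeomorphism at $e$ is part of the general $p$-adic Lie theory of Bourbaki.

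Second, for $X,Y \in V$, the BCH formula yields
\[
  [\exp(X), \exp(Y)] = \exp\bigl([X,Y] + \tfrac{1}{2}[X,[X,Y]] - \tfrac{1}{2}[Y,[X,Y]] + \cdots\bigr),
\]
and the exponent lies in the closed $\Z_p$-submodule $[V,V]$ of $V$. A standard inverse computation shows $[G_0,G_0] = \exp([V,V])$. Inducting on $i$, the same argument gives $D^i(G_0) = \exp(\mathcal D^i(V))$ and $D_i(G_0) = \exp(\mathcal D_i(V))$, where $\mathcal D^i(V)$ and $\mathcal D_i(V)$ denote the corresponding series of $V$ viewed as a $\Z_p$-Lie algebra. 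Each of these is a closed $\Z_p$-submodule of $V$, so its image under $\exp$ is a closed analytic submanifold of $G_0$ and hence a closed subgroup of $G$; by Theorem \ref{theoremClosedSubgroupAreLieGroups} it is a Lie subgroup, and its Lie algebra, being the tangent space at $e$, is the $\Q_p$-span of $\mathcal D^i(V)$, which coincides with $\mathcal D^i(\g)$ (and similarly for the derived series).

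The principal obstacle is the first step: producing $V$ and $G_0$ so that BCH converges on $V$ and so that the exponent appearing in $[\exp(X), \exp(Y)]$ lies in $[V,V]$ rather than in some strictly larger subspace. The $p$-adic growth of the BCH denominators is precisely what forces one to pass to an open subgroup rather than work with $G$ itself; outside of such a \emph{standard} neighborhood, the correspondence between subgroups of $G$ and $\Z_p$-Lie subalgebras of $\g$ generally fails, which is why the statement cannot be upgraded from ``there exists an open subgroup $G_0$'' to a statement about $G$ itself.
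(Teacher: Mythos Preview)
The paper does not give its own proof of this proposition; it simply cites Bourbaki, \emph{Groupes et alg\`ebres de Lie}, Chap.~III, \S 9, Corollary of Proposition~6. Your sketch is precisely the argument Bourbaki carries out there: one passes to a \emph{standard} (in Bourbaki's terminology) open subgroup $G_0$ on which the exponential is a bijection from a bracket-stable $\Z_p$-lattice $V\subset\g$, the BCH series converges, and group multiplication is transported to the Hausdorff product $X\ast Y$ on $V$. The correspondence between the central and derived series of $G_0$ and those of $V$ (hence of $\g$ after tensoring with $\Q_p$) then follows exactly as you outline.

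The one step you label ``a standard inverse computation'' --- the equality $[G_0,G_0]=\exp([V,V])$ rather than merely the inclusion $\subseteq$ --- does require an actual argument: one must show that every $\exp(W)$ with $W\in[V,V]$ is a product of group commutators. Bourbaki handles this via the filtration $p^kV$ and an approximation/convergence argument (the BCH commutator formula gives $[\exp X,\exp Y]=\exp([X,Y]+R)$ with $R$ deeper in the filtration, and one iterates). This is genuinely the nontrivial point, so you should not leave it as a black box; but once filled in, your proof is the same as the cited one.
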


\subsection{Nilpotent groups and embedding into~$p$-adic Lie groups.}

\subsubsection{Nilpotent groups}\label{SubSubSecNilpotentGroups}
The main goal of this section is to show that if~$H$~is a finitely generated nilpotent group with generators~$h_1,
\ldots, h_s$, then for any~$m \geq 1$~the subgroup~$H_m$~of~$H$~generated by~$h_1^m, \ldots, h_s^m$~is a finite index subgroup
of~$H$. This will be useful in the proof of Theorem \ref{BoundNilpotentGroups} because if~$H \subset
\Diff^{an}_1 (\Z_p^d)$~we will need to consider such a subgroup~$H_m$~to get the desired result.

Recall the notation introduced in \S~\ref{par:nilpotent_and_solvable} for nilpotent and solvable groups
and Lie algebras. We shall say that an expression that involves~$k$~commutator brackets is a commutator of
length~$k$; for instance~$[ [a, [b,c]], d]$~is a commutator of length 3 and a single element can be viewed
as a commutator of length 0. For~$k \geq 1$, we denote by~$[a_1; \cdots; a_k]$~the commutator~$[a_1, [a_2,
  \cdots, [a_{k-1}, a_k] \cdots]$; its length is~$k$.

Let~$G, G', G''$~be groups, a map~$\phi: G \times G' \rightarrow G''$~is \emph{bilinear} if for every~$g \in G, g' \in
G'$, the maps~$\phi(g, \cdot)$~and $\phi( \cdot, g')$ are group homomorphisms. More generally, a map~$G_1 \times \cdots
\times G_m \rightarrow G$~is~$m$-linear if fixing~$m-1$~coordinates yields a group homomorphism. For any
triple of elements~$x,y,z$~in~$G$, we have

  \begin{itemize}
    \item~$\inv{[x,y]} = [y,x]$.
    \item~$[x,yz] = [x,y] [ y, [x,z]] [x,z]$.
    \item~$[xy,z] = [x,[y,z]] [y,z] [x,z]$.
  \end{itemize}

  The image of the map~$(a,b) \mapsto [a,b]$~from~$G \times D^{k-1}(G)$~to~$D^k(G)$~generates $D^k (G)$. It follows from the
  last three formulas that, for every~$k \geq 1$, this map induces a bilinear map
  \[ \mathrm{co}_k : G \times D^{k-1}(G) \mapsto D^k(G) / D^{k+1} (G) \]

  and the image $\im \mathrm{co}_k$ generates $D^k(G) / D^{k+1}(G)$.

  \begin{prop}\label{PropGeneratorsOfLowerCentralSeries}
  Let~$G$~be a group and~$S$~a set of generators of~$G$.
  \begin{enumerate}
    \item for every integer~$k \geq 0$, the subgroup~$D^{k}(G) / D^{k+1}(G)$~is generated by the
      commutators of length~$k$~consisting of elements of~$S$.
    \item if~$G$~is finitely generated, then~$D^{k}(G) / D^{k+1}(G)$~is finitely generated for every~$k
      \geq 0$.
    \item If~$G$~is nilpotent, then~$D^{\nilp(G)-1} (G)$~is generated by the commutators of length~$\nilp(G) -1$~in
      elements of~$S$.

  \end{enumerate}
   \end{prop}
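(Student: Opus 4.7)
The plan is a straightforward induction on $k$, leveraging the bilinearity of the map $\mathrm{co}_k$ and the fact that $\im \mathrm{co}_k$ generates $D^k(G)/D^{k+1}(G)$, both of which are stated just before the proposition. Parts (2) and (3) are then quick consequences of (1).

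For (1), I will induct on $k \geq 0$. The base case $k=0$ is immediate: $D^0(G)/D^1(G) = G/[G,G]$ is generated by the images of elements of $S$, and these are precisely the commutators of length $0$ in elements of $S$. For the inductive step, assume $D^{k-1}(G)/D^k(G)$ is generated by the (images of the) commutators of length $k-1$ in elements of $S$; call this set of generators $C_{k-1}$. Since $\mathrm{co}_k : G \times D^{k-1}(G) \to D^k(G)/D^{k+1}(G)$ is bilinear and its image generates the target, the subgroup $D^k(G)/D^{k+1}(G)$ is generated by elements of the form $\mathrm{co}_k(g,h) = [g,h] \bmod D^{k+1}(G)$ with $g \in G$ and $h \in D^{k-1}(G)$. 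By bilinearity, it suffices to let $g$ range over a generating set of $G$, namely $S$, and $h$ range over a generating set of $D^{k-1}(G)/D^k(G)$, namely $C_{k-1}$ (lifted arbitrarily into $D^{k-1}(G)$; the choice of lift does not matter since $\mathrm{co}_k$ descends in its second variable to $D^{k-1}(G)/D^k(G)$). Each such $[s, c]$ with $s \in S$ and $c \in C_{k-1}$ is, by definition, a commutator of length $k$ in elements of $S$, which closes the induction.

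Part (2) follows at once: if $G$ is finitely generated, pick $S$ finite; then the set of commutators of length $k$ in elements of $S$ is finite, so (1) exhibits a finite generating set for $D^k(G)/D^{k+1}(G)$. Part (3) also follows directly: if $G$ is nilpotent, then by definition of $\nilp(G)$ we have $D^{\nilp(G)}(G) = \{1\}$, hence $D^{\nilp(G)-1}(G) = D^{\nilp(G)-1}(G)/D^{\nilp(G)}(G)$, and (1) applied with $k = \nilp(G)-1$ gives the claim.

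The only subtle point — and the one I would write out carefully — is the reduction, in the inductive step of (1), to generators on \emph{both} sides of $\mathrm{co}_k$. On the first factor this is literal bilinearity of a map out of $G$; on the second factor one needs that $\mathrm{co}_k(g, \cdot)$ factors through $D^{k-1}(G)/D^k(G)$, which is exactly the content of the bilinear factorisation stated just above the proposition (its image lies in $D^k/D^{k+1}$ and $[G, D^k(G)] \subset D^{k+1}(G)$ by definition of the lower central series). No other step is expected to be delicate.
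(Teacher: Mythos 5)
Your proof is correct and follows essentially the same route as the paper's: induction on $k$, the base case $k=0$ from $S$ generating $G$, and the inductive step using the bilinearity of $\mathrm{co}_k$ together with the fact that its image generates $D^k(G)/D^{k+1}(G)$, with parts (2) and (3) as immediate consequences. Your extra care about the second factor of $\mathrm{co}_k$ descending to $D^{k-1}(G)/D^k(G)$ is a welcome clarification, but it matches what the paper implicitly uses.
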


\begin{proof}
  Let us prove the first assertion by induction on~$k$. Let~$X_k$~be the set of commutators of length~$k$
  in elements of~$S$. The initialization~$k=0$~follows from~$X_0 = S$~and the fact that~$S$~generates
 ~$G$. Now, suppose~$k \geq 1$~and that~$X_{k-1}$~generates~$D^{k-1} (G) /
 D^{k}(G)$. The image of the map~$\mathrm{co}_k$~generates $D^k(G) / D^{k+1} (G)$; by induction and
 since~$\mathrm{co}_k (a,b)$~is a homomorphism with respect to~$a$~and with respect to~$b$, the
 elements~$[s, x_{k-1}]$~for~$s$~in~$S$~and $x_{k-1} \in X_{k-1}$~generate~$D^k(G) / D^{k+1}(G)$, and
 these elements are exactly the commutators of length~$k$~in the elements of~$S$. The second and third
 assertions follow from the first one.
\end{proof}

\begin{prop}\label{PropSubgroupIsFinitelyGenerated}
  Let~$H$~be a finitely generated nilpotent group, then every subgroup~$H_0$~of~$H$~is finitely generated.
\end{prop}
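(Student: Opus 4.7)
The plan is to proceed by induction on the nilpotency class $c = \nilp(H)$, using the lower central series $H = D^0(H) \supseteq D^1(H) \supseteq \cdots \supseteq D^c(H) = 1$ together with Proposition \ref{PropGeneratorsOfLowerCentralSeries} at each stage.

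For the base case $c = 1$, the group $H$ is abelian and finitely generated, and I would invoke the classical fact that any subgroup of a finitely generated abelian group is finitely generated (this follows either from the structure theorem for finitely generated abelian groups or, more directly, from the observation that $\Z^n$ is a Noetherian $\Z$-module, so any subgroup of $H \cong \Z^n \oplus T$ with $T$ finite is finitely generated).

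For the inductive step, suppose the result holds for finitely generated nilpotent groups of class $< c$, and let $H$ have class $c \geq 2$. Set $Z := D^{c-1}(H)$. Since $[H, Z] = D^c(H) = 1$, the subgroup $Z$ is central in $H$; moreover, by Proposition \ref{PropGeneratorsOfLowerCentralSeries} applied with the chosen finite generating set of $H$, the group $Z$ is a finitely generated abelian group. Given a subgroup $H_0 \leq H$, consider the short exact sequence
\[
1 \longrightarrow H_0 \cap Z \longrightarrow H_0 \longrightarrow H_0 / (H_0 \cap Z) \longrightarrow 1.
\]
The subgroup $H_0 \cap Z$ sits inside the finitely generated abelian group $Z$, hence is finitely generated by the base case. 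On the other hand, $H_0/(H_0 \cap Z)$ embeds into $H/Z$, which is a finitely generated nilpotent group of class at most $c-1$; by the induction hypothesis, every subgroup of $H/Z$ is finitely generated, so $H_0/(H_0 \cap Z)$ is finitely generated. Lifting a finite generating set of the quotient to $H_0$ and combining with a finite generating set of $H_0 \cap Z$ produces a finite generating set for $H_0$, which completes the induction.

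There is no genuine obstacle here: the only point that requires care is verifying that $Z = D^{c-1}(H)$ is both central (immediate from $D^c(H) = 1$) and finitely generated, which is precisely the content of Proposition \ref{PropGeneratorsOfLowerCentralSeries}(2)--(3). The base case on abelian groups is the classical Noetherian input that makes the induction go through.
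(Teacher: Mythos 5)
Your proof is correct, but it takes a genuinely different (and more self-contained) route than the paper. The paper simply cites Segal's book on polycyclic groups, invoking the general fact that every subgroup of a polycyclic group is polycyclic (hence finitely generated), together with the standard observation that a finitely generated nilpotent group is polycyclic. You instead give a direct induction on the nilpotency class: you cut along the last nontrivial term $Z = D^{c-1}(H)$ of the lower central series, which is central and finitely generated by Proposition~\ref{PropGeneratorsOfLowerCentralSeries}, and you combine the Noetherian base case for abelian groups with the standard extension lemma (if $N \trianglelefteq G$ with $N$ and $G/N$ finitely generated, then $G$ is) applied to $1 \to H_0 \cap Z \to H_0 \to H_0/(H_0\cap Z) \to 1$. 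Your version has the advantage of being self-contained and of re-using machinery already developed in the paper; the paper's citation is shorter and in fact proves the stronger polycyclic statement. One small remark: in your inductive step you should also note that $H_0/(H_0\cap Z)$ is actually isomorphic to a subgroup of $H/Z$ via the second isomorphism theorem (it equals $H_0 Z / Z$), which you implicitly use and do state; this is fine and the argument is complete.
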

For a proof see \cite{segal2005polycyclic} where this is actually shown for polycyclic groups, the result follows since
finitely generated nilpotent groups are polycyclic.

  \begin{prop}\label{CorMapNLin}
    Let~$H$~be a nilpotent group of nilpotency class~$t$.
    \begin{enumerate}
      \item the map~$\mathrm{Br}_t : H^t \rightarrow D^{t-1}, (h_1, \cdots, h_t) \mapsto [h_1; h_2;
        \cdots; h_t]$~is multilinear.
      \item If~$\left\{ h_1, \cdots, h_s \right\}$~generates~$H$, then for every~$m \geq 1$, the subgroup generated
        by~$\left\{ h_1^m, \cdots, h_s^m \right\}$~is of finite index in~$H$.
    \end{enumerate}
  \end{prop}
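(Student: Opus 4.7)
For part (1), my plan is to leverage the fact that $D^t(H) = 1$ forces $D^{t-1}(H)$ to be central in $H$, hence abelian. To prove linearity in the first slot, set $z := [h_2; \ldots; h_t] \in D^{t-2}(H)$ and apply the identity $[xy, z] = [x,[y,z]][y,z][x,z]$ from the discussion preceding Proposition \ref{PropGeneratorsOfLowerCentralSeries}: the correction $[h_1',[h_1'',z]]$ lies in $[H, D^{t-1}(H)] = D^t(H) = 1$, so
\[ [h_1' h_1'', z] = [h_1'', z] [h_1', z] = [h_1', z][h_1'', z] \]
by commutativity of $D^{t-1}(H)$. For linearity in slot $i \geq 2$, I would either iterate the same kind of expansion, with every correction term forced into a bracket involving $D^{t-1}(H)$ and hence vanishing, or, more conceptually, invoke the graded Lie ring structure on $\bigoplus_{k \geq 1} D^{k-1}(H)/D^k(H)$: the commutator induces bilinear maps $D^{i-1}/D^i \times D^{j-1}/D^j \to D^{i+j-1}/D^{i+j}$ (the bilinear map $\mathrm{co}_k$ of the excerpt, extended to all graded pieces), and iterating $t-1$ times gives a multilinear map $(H/D^1(H))^t \to D^{t-1}(H)/D^t(H) = D^{t-1}(H)$ which on representatives coincides with $\mathrm{Br}_t$.

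For part (2), I plan an induction on the nilpotency class $t$. The case $t = 1$ is just the elementary remark that for a finitely generated abelian group $H = \langle h_1, \ldots, h_s \rangle$, the subgroup $H_m$ equals $mH$ and $H/mH$ is a quotient of $(\Z/m\Z)^s$, hence finite. For $t \geq 2$, consider the quotient $\overline H := H/D^{t-1}(H)$, generated by the images $\overline h_i$ and of nilpotency class at most $t-1$. The inductive hypothesis shows that $\overline{H_m} = H_m D^{t-1}(H)/D^{t-1}(H)$ has finite index in $\overline H$, so $[H : H_m \cdot D^{t-1}(H)]$ is finite. It then suffices to bound $[D^{t-1}(H) : D^{t-1}(H) \cap H_m]$. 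This is where part (1) comes in: by Proposition \ref{PropGeneratorsOfLowerCentralSeries}(3), the abelian group $D^{t-1}(H)$ is finitely generated by the iterated commutators $[h_{i_1}; \ldots; h_{i_t}]$, and multilinearity yields
\[ [h_{i_1}; \ldots; h_{i_t}]^{m^t} = [h_{i_1}^m; h_{i_2}^m; \ldots; h_{i_t}^m] \in H_m. \]
Thus $m^t D^{t-1}(H) \subset H_m \cap D^{t-1}(H)$, and the quotient $D^{t-1}(H)/m^t D^{t-1}(H)$ is finite since $D^{t-1}(H)$ is finitely generated abelian; multiplying the two finite indices gives $[H : H_m] < \infty$.

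The main obstacle is part (1): the direct manipulation of commutator identities becomes notationally heavy when one tries to establish linearity in a slot $i > 1$ by hand, because several correction terms must be tracked and each must be shown to vanish using $D^t(H) = 1$. The cleanest way out is the graded Lie ring viewpoint, where the identities needed are already those used in the paper to define $\mathrm{co}_k$, and the multilinearity of the iterated bracket is automatic. Once (1) is in place, the application in (2) is strikingly clean: multilinearity turns $m$-th powers in each slot into a single $m^t$-th power in $D^{t-1}(H)$, which is exactly what is needed to absorb a finite-index subgroup of $D^{t-1}(H)$ into $H_m$.
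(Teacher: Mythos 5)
Your proof is correct, and part (2) essentially reproduces the paper's argument: induction on the nilpotency class, reduction modulo the central subgroup $D^{t-1}(H)$, and using the multilinearity of $\mathrm{Br}_t$ together with the abelian case to show $D^{t-1}(H) \cap H_m$ has finite index in $D^{t-1}(H)$. (You correctly compute the exponent as $m^t$; the paper writes $m^{t-1}$ and $[x_1;\ldots;x_{t-1}]$, which appear to be typos, since $D^{t-1}(H)$ is generated by $t$-fold iterated commutators.) Where you diverge is part (1). The paper proves multilinearity of $\mathrm{Br}_t$ by induction on $t$: linearity in the first slot comes from the bilinearity of $\mathrm{co}_{t-1}$, and for the slots $i \geq 2$ it invokes the inductive hypothesis on $H/D^{t-1}(H)$, that is, the multilinearity of $\mathrm{Br}_{t-1}$ applied to the inner commutator, and then absorbs the error term $g \in D^{t-1}(H)$ using centrality. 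Your argument for slot $1$ is the same computation as the paper's, but for the remaining slots you instead appeal to the graded Lie ring $\bigoplus_k D^{k-1}(H)/D^k(H)$ associated to the lower central series: the induced brackets are bilinear, so the iterated bracket is automatically multilinear, and it agrees with $\mathrm{Br}_t$ on representatives in the top graded piece $D^{t-1}/D^t = D^{t-1}$. This is more conceptual and avoids the notational bookkeeping of a multi-slot induction, at the cost of invoking (or re-deriving) the graded Lie ring structure, which is not explicitly set up in the paper; the paper's induction uses exactly the same commutator identities but organizes them so that only the bilinearity of $\mathrm{co}_{t-1}$ is needed at each step. Both routes are valid and of comparable length once the machinery is in place.
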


    \begin{proof}[Proof of the first assertion] Let us do an induction on~$t$. The case~$t=1$~being
      trivial, suppose the result true for a
    nilpotent group of class~$t-1$~and consider~$H$~a nilpotent group of class~$t$. Since~$D^t(H) = 0$,
    one has that the map~$\mathrm{co}_{t-1}: (h_1,h) \in H \times D^{t-2}(H) / D^{t-1}(H) \mapsto [h;x] \in
    D^{t-1}(H)$~is bilinear; thus,~$\mathrm{Br}_t$~is a homomorphism with respect to the first factor~$h_1
    \in H$. Let us show that~$\mathrm{Br}_t$~is a homomorphism in the second coordinates~$h_2$, the other
    coordinates are dealt with in the same way. By induction, the map
    \[
      \mathrm{Br}_{t-1}^{H / D^{t-1}(H)}: (H / D^{t-1}(H))^{t-1}\rightarrow D^{t-2}(H) / D^{t-1}(H)
    \]
    is~multilinear. Take~$h_1, h_2, h_2', h_3, \cdots, h_{t-1} \in H$, the multilinearity
    of~$\mathrm{Br}_{t-1}^{H / D^{t-1}(H)}$~provides an element~$g \in D^{t-1}(H)$~such that \[ [h_1; h_2
      h_2'; \cdots ; h_{t-1}] = [h_1, [h_2; \cdots; h_{t-1}]\cdot  [h_2 '; \cdots;
    h_{t-1}] \cdot g] \] and the bilinearity of~$\mathrm{co}_{t-1}$~gives the result since~$[h_1, g] =0$.

  \end{proof}

  \begin{proof}[Proof of the second assertion]
    We set~$S=\left\{ h_1, \cdots, h_s \right\}$~ and we denote
    by~$H_{S,m}$~the subgroup of~$H$~generated by the set ~$\left\{ s^m : s \in S \right\}$. We show by
    induction on~$t=\nilp(H)$~that~$H_{S,m}$~is of finite index in~$H$.

    If~$t=1$~then~$H$~is abelian and
    there is a unique surjective group homomorphism~$\Z^s \rightarrow H$~sending the canonical basis
    to~$S = (h_1, \cdots, h_s)$. The subgroup~$H_{S,m}$~is the image of~$m \Z^s$. Therefore, there is a
    surjective group homomorphism~$\Z^s / m \Z^s \twoheadrightarrow H / H_{S,m}$~and we get that~$H / H_{S,m}$~has at
    most~$m^s$~elements.

    Now suppose the result true for a group of nilpotency class~$t-1$~and assume~$\nilp (H) = t$, with~$t
    \geq 2$.  Set~$T := D^{t-1}(H)$,~$T$~is central in~$H$. One has the exact sequence
    \[ 1 \rightarrow T \rightarrow H \rightarrow H/ T \rightarrow 1. \]

    By induction, the image of~$H_{S,m}$~in~$H / T$~is of finite index; thus, one can fix a finite set~$A
    \subset H$~such that~$H = \bigsqcup_{h \in A} h H_{S,m} T$. To conclude, we only need to show that the
    index of~$T \cap H_{S,m}$~in~$T$~is finite. Since,~$T \cap H_{S,m}$~contains the subgroup
    of~$t-1$~commutators~$D^{t-1}(H_{S,m})$~it suffices to show that the index of~$D^{t-1}(H_{S,m})$
    in~$T$~is finite.

    By Proposition \ref{PropGeneratorsOfLowerCentralSeries},~$T$~is generated by the set~$S' = \left\{ [x_1; \cdots;
      x_{t-1}]: x_i \in S \right\}$~and~$D^{t-1}(H_{S,m})$~is generated by the set~$S'' = \left\{[x_1^m; \cdots;
      x_{t-1}^m] : x_i \in S \right\}$~furthermore, the first assertion shows that~$S''$~consists exactly of the
      elements of~$S'$~raised to the power~$m^{t-1}$. So by the abelian case,~$D^{t-1}(H_{S,m})$~is of finite index
      in~$T$.

  \end{proof}

  \subsubsection{Malcev's completion of nilpotent torsion-free finitely generated group}
  Denote by~$\hat \Z = \prod_{p \text{ prime}} \Z_p$~equipped with the product topology (the adelic topology). It is the
  profinite completion of~$\Z$.
  Let~$H$~be a nilpotent torsion-free finitely generated group. It is known that~$H$~embeds into
 ~$\Tri_1(n, \Z)$~the group of upper triangular matrices with integer coefficients and 1's on the diagonal for some
  integer~$n$~(see for example \cite{segal2005polycyclic} Theorem 2 of Chapter 5). For the rest of this section, we fix an
  embedding~$\iota: H \hookrightarrow \Tri_1(n, \Z)$.  There are two topologies that one can consider on~$\iota(H)$.
  First the adelic topology induced by the inclusion~$\Tri_1(n, \Z) \subset \Tri_1(n, \hat \Z)$, and second, the profinite
  topology where a basis of neighbourhood for the neutral element are the subgroups of finite index in~$\iota(H)$.

  \begin{prop}\label{PropAdelicTopologyAndProfiniteTopologyAreTheSame}
    Let~$G \subset \Tri_1(n, \Z)$~be a subgroup of matrices with integer coefficients and 1's on the diagonal, then the
    profinite topology and the adelic topology on~$G$~are the same. In particular, the profinite completion of $G$
    coincides with the closure of $G$ in $\Tri_1 (n , \hat \Z)$.
  \end{prop}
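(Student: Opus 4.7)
The plan is to compare the two topologies via their bases of neighborhoods of the identity. The easy direction is that the adelic topology is coarser than the profinite topology: each adelic basic open set $G \cap U_N$, where $U_N := \ker(\Tri_1(n,\Z) \to \Tri_1(n, \Z/N\Z))$, has finite index in $G$ since $\Tri_1(n, \Z/N\Z)$ is finite, and is therefore open in the profinite topology.

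For the converse, I will show that every finite-index subgroup $G_0 \subset G$ contains some $G \cap U_N$. Replacing $G_0$ by its normal core, which still has finite index since $G$ is finitely generated by Proposition \ref{PropSubgroupIsFinitelyGenerated}, I may assume $G_0$ is normal in $G$ with $[G:G_0] = m$, so that $g^m \in G_0$ for every $g \in G$. I will first establish the statement for the ambient group $\Gamma := \Tri_1(n, \Z)$, namely that $U_m$ is contained in $\Gamma^m := \langle g^m : g \in \Gamma \rangle$. The proof is by explicit factorisation: any $g \in U_m$ can be written as an ordered product $\prod_{i<j} X_{ij}^{m c_{ij}}$ of elementary unipotents $X_{ij} := I + e_{ij}$ raised to integer multiples of $m$, with exponents $c_{ij}$ determined inductively on $j-i$ from a triangular system whose commutator corrections at each step are polynomial expressions in entries already lying in $m\Z$ and hence themselves lying in $m\Z$.

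To transfer this from $\Gamma$ to the arbitrary subgroup $G$, I would invoke subgroup separability of polycyclic groups: the finitely generated subgroup $G_0 \subset \Gamma$ is closed in the profinite topology of $\Gamma$, and since $G/G_0$ is finite, one can find a single finite-index subgroup $H \subseteq \Gamma$ with $G_0 = G \cap H$. Applying the ambient statement to $H$ (for $m' = [\Gamma:H]$, one has $U_{m'} \subseteq \Gamma^{m'} \subseteq H$) yields an $N$ with $U_N \subseteq H$, whence $G \cap U_N \subseteq G \cap H = G_0$, as required. The ``in particular'' clause then follows at once, because the profinite completion and the closure of $G$ in $\Tri_1(n, \hat \Z)$ are both the completion of $G$ with respect to the same topology.

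The hard part is the factorisation lemma $U_m \subseteq \Gamma^m$, which really reduces to a careful triangular BCH-style computation in $\Tri_1(n,\Z)$; subgroup separability of polycyclic groups is a standard input but could alternatively be proved directly here by induction on $n$ using the central filtration of $\Tri_1(n,\Z)$, whose successive quotients are free abelian.
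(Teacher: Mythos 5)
Your proposal is correct in spirit and reaches the same conclusion, but it takes a genuinely different route from the paper's. The paper disposes of the hard direction in one stroke by quoting Chahal's affirmative solution to the congruence subgroup problem for arithmetic soluble groups, together with the fact (quoted from Segal) that a unipotent matrix group over $\Q$ is arithmetic; this gives directly that every finite-index subgroup of $G$ contains a set of the form $G\cap U_N$. You instead split the work into two pieces: (i) an elementary factorisation argument showing that for the full ambient group $\Gamma=\Tri_1(n,\Z)$ one has $U_m\subseteq \Gamma^m$, i.e.\ the congruence subgroup property for $\Gamma$ itself; and (ii) Mal'cev's subgroup separability of polycyclic groups, used to realise the given finite-index subgroup $G_0\subseteq G$ as $G\cap H$ for a finite-index $H\subseteq\Gamma$, so that the result for $\Gamma$ transfers to $G$. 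What your decomposition buys is that the only non-elementary input is Mal'cev's classical LERF theorem for polycyclic groups, while the congruence part is reduced to an explicit and checkable computation with elementary unipotents (and can even be stated in the sharper form $U_m=\langle X_{ij}^m: i<j\rangle$, since the intermediate products remain in $U_m$ because $U_m$ is a subgroup). The paper's route is shorter on the page but leans on a heavier citation.

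Two small points to tighten. First, your claim $\Gamma^{m'}\subseteq H$ for $m'=[\Gamma:H]$ is only valid when $H$ is normal in $\Gamma$; you should pass to the normal core of $H$ first (which is harmless and changes only the index $m'$). Second, the phrase ``has finite index since $G$ is finitely generated by Proposition \ref{PropSubgroupIsFinitelyGenerated}'' is slightly misplaced: the normal core of a finite-index subgroup has finite index in any group, without any finiteness hypothesis; the relevant use of Proposition \ref{PropSubgroupIsFinitelyGenerated} is to ensure that $G_0$ is finitely generated so that subgroup separability applies to it. Finally, note that the remark ``$g^m\in G_0$ for every $g\in G$'' is never actually used in your argument, since the factorisation lemma works with $m$-th powers of elements of $\Gamma$, not of $G$; it could safely be deleted.
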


  \begin{proof}
    First, let~$K$~be a subgroup of~$\GL_n(\Z)$~of the form~$K =\left\{ A \in \GL_n(\Z) : A \equiv \id \mod m \right\}$~for some
    integer~$m$, such groups~$K$~form a basis of open neighbourhood of~$\id$~for the adelic topology. It is a normal
    subgroup of~$\GL_n(\Z)$~with finite quotient, therefore~$G \cap K$~is a finite index
    subgroup of~$G$. Therefore the adelic topology is finer than the profinite topology.

    Conversely,~$G$~is a unipotent group of matrices over~$\Q$,
    therefore it is arithmetic (see \cite{segal2005polycyclic} Exercise 13 of Chapter 6). By the affirmative solution to
    the congruence subgroup problem for arithmetic soluble groups (see \cite{chahal1980}), we get that~$G$~is a
      congruence subgroup. This means that every finite index subgroup of~$G$
      contains a subgroup of the form~$G \cap \left\{ A \in \GL_n (\Z) : A \equiv \id \mod m \right\}$~for some integer~$m$.
      Therefore, the profinite topology is finer than the adelic topology; thus, they are the same.
    \end{proof}

    A consequence of this proposition is that the profinite completion of~$\iota(H)$~is exactly the closure of
   ~$\iota(H)$~in $\Tri_1(n, \hat \Z)$.

  \begin{prop}\label{PropCompletionIsProP}
    Let~$G$~be a nilpotent subgroup of~$\Tri_1(n, \Z)$. The closure of~$G$~in~$\Tri_1(n, \Z_p)$~is the
    pro-$p$-completion of~$G$, in particular it is a~$p$-adic Lie group.
  \end{prop}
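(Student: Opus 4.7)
The strategy is to reduce to the adelic situation already handled by Proposition \ref{PropAdelicTopologyAndProfiniteTopologyAreTheSame} and then decompose the profinite completion primewise, using that $G$ is nilpotent.

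First I would set up the global picture. Identify $\Tri_1(n,\hat\Z)$ with the product $\prod_{\ell} \Tri_1(n,\Z_\ell)$ via the canonical isomorphism $\hat\Z = \prod_\ell \Z_\ell$, and let $\pi_p: \Tri_1(n,\hat\Z) \to \Tri_1(n,\Z_p)$ be the projection to the $p$-th factor. Let $\bar G$ denote the closure of $G$ in $\Tri_1(n,\Z_p)$ and let $\overline G^{\mathrm{ad}}$ denote the closure of $G$ in $\Tri_1(n,\hat\Z)$. By Proposition \ref{PropAdelicTopologyAndProfiniteTopologyAreTheSame}, $\overline G^{\mathrm{ad}}$ coincides with the profinite completion $\hat G$. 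Since $\pi_p$ is continuous and $\overline G^{\mathrm{ad}}$ is compact, the image $\pi_p(\overline G^{\mathrm{ad}})$ is a compact, hence closed, subset of $\Tri_1(n,\Z_p)$ containing $G$; conversely every point of $\pi_p(\overline G^{\mathrm{ad}})$ is a limit of elements of $G$ because $G$ is dense in $\overline G^{\mathrm{ad}}$. Therefore $\bar G = \pi_p(\hat G)$.

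Next I would invoke the key nilpotence input: for any finitely generated nilpotent group $G$, the profinite completion decomposes canonically as a direct product of its pro-$\ell$ completions,
\[
\hat G \;\cong\; \prod_{\ell \text{ prime}} G_\ell.
\]
This follows from the fact that every finite nilpotent group splits as the direct product of its Sylow subgroups, so each finite quotient $G/N$ decomposes compatibly; passing to the inverse limit over a cofinal family of finite-index normal subgroups yields the decomposition. Under this identification, the projection $\pi_p$ restricted to $\hat G$ is the projection $\prod_\ell G_\ell \to G_p$, whose image is exactly $G_p$. Combining with the previous paragraph gives $\bar G = G_p$, which is the pro-$p$ completion of $G$.

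Finally, for the Lie-group assertion, I would observe that $\Tri_1(n,\Z_p)$ is a finite-dimensional $p$-adic Lie group over $\Q_p$ (it is the $\Z_p$-points of a unipotent algebraic group, whose coordinate patches are given by the off-diagonal entries). Since $\bar G$ is a closed subgroup, Theorem \ref{theoremClosedSubgroupAreLieGroups} immediately promotes it to a Lie subgroup of $\Tri_1(n,\Z_p)$, hence a $p$-adic Lie group. The main subtlety in the whole argument is the primewise splitting $\hat G \cong \prod_\ell G_\ell$; the rest is formal manipulation of closures and a citation of the closed-subgroup theorem.
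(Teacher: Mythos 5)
Your argument follows the same route as the paper: identify the profinite completion with the adelic closure via Proposition \ref{PropAdelicTopologyAndProfiniteTopologyAreTheSame}, split $\hat G \cong \prod_\ell G_\ell$ using nilpotence, project to the $p$-th factor, and invoke the closed-subgroup theorem. The one place you gloss over a real step is the claim that ``the projection $\pi_p$ restricted to $\hat G$ is the projection $\prod_\ell G_\ell \to G_p$.'' The identification $\hat G \cong \prod_\ell G_\ell$ is internal to $\hat G$, while the embedding $\hat G \hookrightarrow \prod_\ell \Tri_1(n,\Z_\ell)$ is an a priori unrelated map; you need to check that these two product decompositions are compatible, i.e.\ that the embedding sends $G_\ell$ into the factor $\Tri_1(n,\Z_\ell)$ and nowhere else. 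This follows because for $\ell \neq q$ any continuous homomorphism from a pro-$\ell$ group to a pro-$q$ group is trivial (every finite quotient of the target is a $q$-group, every finite quotient of the source is an $\ell$-group, so the induced map on each finite quotient is trivial), and $\Tri_1(n,\Z_q) = \varprojlim \Tri_1(n,\Z/q^k\Z)$ is pro-$q$. The paper states this pro-$p$/pro-$\ell$ incompatibility explicitly; you should too, since without it the assertion that $\pi_p|_{\hat G}$ is the canonical projection onto the $G_p$ factor, and in particular that it is injective on $G_p$, is not justified. Apart from that missing sentence, your compactness argument identifying $\bar G$ with $\pi_p(\hat G)$ is correct and slightly more careful than the paper's, and the conclusion via Theorem \ref{theoremClosedSubgroupAreLieGroups} is exactly right.
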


  \begin{proof}
    Denote by~$\hat G$~the profinite completion of~$G$~and for a prime~$\ell$,~$G_\ell$~the pro-$\ell$-completion of~$G$.
    Since~$G$~is nilpotent and a finite nilpotent group is a product of~$\ell$-groups for some primes~$\ell$~(see
    \cite{bourbaki1970algebre} chapter 1, \S 7, Theorem~4) we have that~$\hat G = \prod_\ell G_\ell$. By Proposition
    \ref{PropAdelicTopologyAndProfiniteTopologyAreTheSame}, we have a continuous injective homomorphism of topological
    groups

    \[
      \hat G = \prod_\ell G_\ell \hookrightarrow \Tri_1(n, \hat \Z) = \prod_\ell \Tri_1(n, \Z_\ell).
    \]

    For a prime~$p$, this induces a continuous group homomorphism~$G_p \hookrightarrow \prod_\ell \Tri_1(n, \Z_\ell)$.
    But,~$G_p$~is a pro-$p$-group and for every prime~$\ell$,~$\Tri_1(n, \Z_\ell) = \varprojlim \Tri_1(n, \Z /\ell^k
    \Z)$~is a pro-$\ell$-group.  Therefore,~$G_p$~can be identified with the image of~$\hat G$~in~$\Tri_1(n, \Z_p)$;
    this is exactly the completion of~$G$~in~$\Tri_1(n, \Z_p)$, meaning that~$G_p$~is a closed subgroup of the~$p$-adic
    Lie group~$\Tri_1(n, \Z_p)$, so it is a Lie group by  Theorem \ref{theoremClosedSubgroupAreLieGroups}.
  \end{proof}

  \begin{thm}\label{BigtheoremPropClosureIsALieGroup}\label{MinorationpAdic2}
    Let $c>0$ be such that $c > \frac{1}{p-1}$ and let~$H$~be a finitely generated nilpotent subgroup
    of~$\Diff^{an}_c(\Z_p^d)$, then the closure~$\bar H$~of ~$H$~in~$\Diff^{an}(\Z_p^d)$~is a
    finite-dimensional nilpotent Lie group.

    Furthermore, denote by~$\h$~the Lie algebra of~$\bar H$, then~$\h$~is a finite-dimensional nilpotent Lie algebra and
    ~$\dl(\h) \geq \vdl(H)$.
  \end{thm}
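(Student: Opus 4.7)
The plan has two parts: first realize $\bar H$ as a finite-dimensional $p$-adic Lie group, then bound $\vdl(H)$ by $\dl(\h)$.

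For the first part, I would use the Malcev embedding together with Proposition \ref{PropCompletionIsProP} to produce an explicit compact $p$-adic Lie group whose image in $\Diff^{an}_c(\Z_p^d)$ is precisely $\bar H$. First, observe that the Bell--Poonen flow argument of Corollary \ref{NoTorsion} extends verbatim from $c = 1$, $p \geq 3$ to the general case $c > \frac{1}{p-1}$: every element of $\bar H \subset \Diff^{an}_c(\Z_p^d)$ sits on a Tate-analytic flow, so it cannot have finite order by Strassman--analytic continuation. Hence both $H$ and $\bar H$ are torsion-free. Since $H$ is then a finitely generated torsion-free nilpotent group, Malcev's theorem provides an embedding $\iota: H \hookrightarrow \Tri_1(n, \Z)$ for some $n$. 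Let $G$ be the closure of $\iota(H)$ in $\Tri_1(n, \Z_p)$: by Proposition \ref{PropCompletionIsProP}, $G$ is the pro-$p$-completion of $H$, it is a compact $p$-adic Lie group, and it is nilpotent as a subgroup of $\Tri_1(n, \Z_p)$.

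Now I would apply Proposition \ref{PropEmbeddingOfProPcompletion} to the given inclusion $H \hookrightarrow \Diff^{an}_c(\Z_p^d)$: it extends uniquely to a Lie group homomorphism $\phi: G \to \Diff^{an}_c(\Z_p^d)$. Since $G$ is compact, $\phi(G)$ is compact, hence closed in $\Diff^{an}_c(\Z_p^d)$; it contains $H$ and equals the closure of $\phi(H) = H$ inside itself, so $\phi(G) = \bar H$. The induced continuous bijection $G/\ker\phi \to \bar H$ goes from a compact space to a Hausdorff space and is therefore a homeomorphism; so $\bar H$ inherits from $G/\ker\phi$ the structure of a finite-dimensional nilpotent $p$-adic Lie group. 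Its Lie algebra $\h$ is a quotient of $\mathrm{Lie}(G)$, hence finite-dimensional and nilpotent.

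For the second part, I would apply Proposition \ref{PropOpenSubgroupDerivedSeries} to the finite-dimensional Lie group $\bar H$ to find an open subgroup $\bar H_0 \subset \bar H$ all of whose derived subgroups $D_i(\bar H_0)$ are Lie subgroups with Lie algebra $\mathcal D_i(\h)$. Because $\bar H$ is compact and $\bar H_0$ is open, $\bar H_0$ has finite index in $\bar H$; consequently $H_0 := H \cap \bar H_0$ has finite index in $H$. By definition of $\dl(\h)$ one has $\mathcal D_{\dl(\h)}(\h) = 0$, so $D_{\dl(\h)}(\bar H_0)$ is a closed subgroup of $\bar H$ with trivial Lie algebra, hence discrete. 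A discrete subgroup of a compact group is finite, and a finite subgroup of the torsion-free group $\bar H$ is trivial. Therefore $D_{\dl(\h)}(\bar H_0) = 1$, so $\dl(H_0) \leq \dl(\bar H_0) \leq \dl(\h)$, and finally $\vdl(H) \leq \dl(H_0) \leq \dl(\h)$.

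The main obstacle, to my mind, is ensuring that the pro-$p$-completion appearing in Proposition \ref{PropEmbeddingOfProPcompletion} is the same object as the closure $G$ in $\Tri_1(n, \Z_p)$ coming from Malcev's embedding; this is precisely the role of Proposition \ref{PropCompletionIsProP}, and the chief subtlety in applying it (namely, that the profinite and adelic topologies on $\iota(H)$ coincide via the congruence subgroup property for unipotent arithmetic groups) has already been discharged there. Once these pieces are aligned, the rest of the argument reduces to combining compactness, torsion-freeness, and the standard finite-dimensional Lie correspondence.
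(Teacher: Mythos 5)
Your proof is correct and follows the paper's overall strategy: Malcev embedding into $\Tri_1(n,\Z)$, pro-$p$-completion and closure in $\Tri_1(n,\Z_p)$ (Proposition \ref{PropCompletionIsProP}), extension to a Lie group homomorphism via Proposition \ref{PropEmbeddingOfProPcompletion}, identification of $\bar H$ with $G/\ker\phi$ using compactness, and then Proposition \ref{PropOpenSubgroupDerivedSeries}. Two points where your write-up is tidier than the paper's are worth noting. In the second part, the paper produces a finite-index subgroup of $H$ inside the open subgroup $H_1 \subset \bar H$ by taking a generating set $f_1,\dots,f_s$ of $H$ and replacing each $f_i$ by $f_i^{p^c}$, invoking Proposition \ref{CorMapNLin} (finite index) and Lemma \ref{lemma:PuissanceCongruence} (containment in $H_1$); you instead observe directly that $\bar H_0$ is open in the compact group $\bar H$, hence finite index, hence $H \cap \bar H_0$ is finite index in $H$, which bypasses both of those auxiliary results. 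Second, the paper asserts $\dl(\h) = \dl(H_1)$ without comment, while you spell out the required chain: $\mathcal D_{\dl(\h)}(\h)=0$ forces $D_{\dl(\h)}(\bar H_0)$ to be discrete, hence finite (compactness), hence trivial (torsion-freeness of $\bar H$, which you justify by extending Corollary \ref{NoTorsion} to $c > 1/(p-1)$ via Bell--Poonen). Making these steps explicit is an improvement; the mathematics is the same.
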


  \begin{proof}
    Set~$G = \iota(H)$~and~$\psi := \inv \iota: G \rightarrow \Diff^{an}_c(\Z_p^d)$. By Proposition \ref{PropCompletionIsProP}
    and Proposition \ref{PropEmbeddingOfProPcompletion},~$\psi$
    extends to a Lie group homomorphism~$\psi: G_p \rightarrow \Diff^{an}_c(\Z_p^d)$~where~$G_p$~is the closure
    of~$G$~in~$\Tri_1 (n, \Z_p)$; we show that the image of~$\psi$~is the closure of~$H$~in~$\Diff^{an}(\Z_p^d)$.

    Let~$K$~be the image of~$\psi$.  Since~$\Tri_1(n,\Z_p)$~is compact and~$G_p$~is closed,~$G_p$~is also compact and
    so is~$K$. This implies that the closure~$\overline H$~of~$H$~is included in~$K$. And~$K$~is
    included in~$\overline H$~because of the continuity of~$\psi$. This shows that~$\overline H$~is a finite
    dimensional Lie group isomorphic to~$G_p / \ker \psi$.

    Now, we show the statement for~$\h$. By Proposition
    \ref{PropOpenSubgroupDerivedSeries}, there exists an open subgroup~$H_1$~of~$\overline H$, such that~$D^i (H_1)$~is a
    Lie subgroup of~$\overline H$~with Lie algebra~$\mathcal D^i (\h)$.  Since~$H_1$~is open, by Theorem
    \ref{theoremDiffAnIsALieGroup} there exists  an integer~$c >0$~such that~$\Diff^{an}_c (\Z_p^d) \cap H \subset H_1$.
    Take~$f_1, \cdots, f_s$~generators of~$H$. Then by Proposition \ref{CorMapNLin} the
    subgroup~$H'$~generated by the~$f_i^{p^c}$'s is a finite index subgroup of~$H$~and it is included in~$H_1$~by Lemma
    \ref{lemma:PuissanceCongruence}, therefore~$\dl (\h) = \dl(H_1) \geq \dl(H') \geq \vdl(H)$.
  \end{proof}

  \section{Finitely generated nilpotent groups}\label{SecFinitelyGeneratedNilpotentGroups}
  \subsection{Base change from~$\C$~to~$\Z_p$: Good models}

  To prove Theorem \ref{BoundNilpotentGroups}, we shall ultimately apply Theorem \ref{BigtheoremPropClosureIsALieGroup}.
  Thus, we need a method to transfer problems regarding groups of automorphisms defined over~$\C$~to similar problems on
  groups of Tate analytic diffeomorphisms over~$\Z_p$, for certain primes~$p$.

  \begin{thm}[Lech, see \cite{Lech53}] \label{theoremLechPlongementPadique} Let~$\K$~be a finitely generated field over~$\Q$
    and let~$S$~be a finite subset of~$\K$. Then there exists an infinite number of prime numbers~$p$~with an
    embedding~$\K \hookrightarrow \Q_p$~such that all elements of~$S$~are mapped to~$\Z_p$.
  \end{thm}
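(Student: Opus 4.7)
The plan is to produce $\Z_p$-valued points on a finitely generated $\Z$-model of $\K$ for infinitely many primes $p$, via reduction mod $p$ followed by Hensel's lemma.

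First, I would normalize the presentation of $\K$. Write $\K = F(\theta)$ where $F = \Q(t_1, \dots, t_d)$ is purely transcendental (with $d = \mathrm{trdeg}_\Q \K$) and $\theta$ has minimal polynomial $P \in F[X]$; by clearing denominators one may assume $P \in \Z[t_1, \dots, t_d][X]$ is monic in $X$, at the cost of inverting a single integer $N$. Let $A := \Z[1/N][t_1, \dots, t_d, \theta]$; enlarging $N$ further, I may assume $S \subset A$, that each element of $S$ has integral expression in the basis $1, \theta, \dots, \theta^{\deg_X P - 1}$, and that the discriminant $\Delta(t) \in \Z[1/N][t_1, \dots, t_d]$ of $P$ in $X$ becomes visibly nonzero. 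Then $\mathrm{Frac}(A) = \K$, and for any prime $p \nmid N$ a nonzero ring homomorphism $A \to \Z_p$ extends uniquely, by passing to fraction fields, to an embedding $\K \hookrightarrow \Q_p$ carrying $S$ into $\Z_p$.

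Second, constructing such a homomorphism amounts to producing $(a_1, \dots, a_d, \alpha) \in \Z_p^{d+1}$ with $P(a, \alpha) = 0$ and $\Delta(a) \neq 0$. By Hensel's lemma applied to $P(a, X) \in \Z_p[X]$ as a polynomial in $X$, it suffices to exhibit $(\bar a, \bar \alpha) \in \F_p^{d+1}$ with $\bar P(\bar a, \bar \alpha) = 0$ and $\partial_X \bar P(\bar a, \bar \alpha) \neq 0$; the latter automatically forces $\bar \Delta(\bar a) \neq 0$.

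Third, I need to exhibit such a mod-$p$ point for infinitely many $p$. Consider the hypersurface $V := \{P = 0\} \subset \mathbb A^{d+1}_\Z$, which is irreducible over $\Q$ of relative dimension $d$ and smooth away from $\{\Delta = 0\}$. Let $L := \K \cap \overline{\Q}$ be the field of constants, a number field over which $V$ becomes geometrically irreducible. By Chebotarev's density theorem applied to a Galois extension of $\Q$ containing $L$, infinitely many primes $p$ split completely in $L$; for almost all such $p$, the reduction $V_{\F_p}$ is geometrically irreducible of dimension $d$, and the Lang--Weil estimate gives $|V(\F_p)| = p^d + O(p^{d-1/2})$. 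For $p$ sufficiently large, this guarantees smooth $\F_p$-points of $V$ avoiding the auxiliary bad locus $\{\bar \Delta = 0\}$, and Hensel's lemma lifts any such point to the desired element of $A(\Z_p)$.

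The main obstacle is the third step: securing the existence of $\F_p$-rational points on $V_{\F_p}$ for infinitely many $p$, rather than just points over some finite extension. Passing to the field of constants $L$ to obtain geometric irreducibility, then combining Chebotarev density with the Lang--Weil estimate, is the essential arithmetic-geometric ingredient. The Noether-normalization reformulation and the final Hensel lift are by comparison routine.
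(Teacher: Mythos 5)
The paper cites this theorem of Lech without reproving it, so I assess your proposal on its own merits. The overall route (spread out to a $\Z[1/N]$-model, produce $\F_p$-points via Lang--Weil after adjusting for the constant field, then Hensel-lift) is workable, though heavier than the usual argument, which specializes the transcendence basis to integers, invokes Chebotarev for the resulting number field, and then perturbs back.

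There is, however, a genuine gap in your second step. You assert that ``a nonzero ring homomorphism $A \to \Z_p$ extends uniquely, by passing to fraction fields, to an embedding $\K \hookrightarrow \Q_p$.'' This is false once $\mathrm{trdeg}_\Q \K \geq 1$: a homomorphism from the domain $A$ to $\Z_p$ passes to $\K = \Frac(A)$ only when it is \emph{injective}, and nonzero does not imply injective (for instance $\Z[t]\to\Z_p$, $t\mapsto 0$, has kernel $(t)$). The Hensel-lifted $\Z_p$-point $(a,\alpha)$ therefore needs a property your construction does not secure: $a_1,\dots,a_d$ must be algebraically independent over $\Q$, but you have only prescribed them modulo $p$. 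The repair is short but must be made. The set of tuples $a\in\Z_p^d$ lying on some hypersurface $\{Q=0\}$ with $0\neq Q\in\Q[t_1,\dots,t_d]$ is a countable union of Haar-measure-zero subsets of $\Z_p^d$, so the residue polydisc above $\bar a$ contains a tuple $a'$ algebraically independent over $\Q$; since the Hensel hypotheses $\bar P(\bar a,\bar\alpha)=0$ and $\partial_X\bar P(\bar a,\bar\alpha)\neq 0$ depend only on $a'\bmod p$, Hensel still furnishes $\alpha'\in\Z_p$ with $P(a',\alpha')=0$. Now $\Z[1/N][t_1,\dots,t_d]\to\Z_p$ is injective, $A$ is integral over $\Z[1/N][t_1,\dots,t_d]$ because $P$ is monic, and in an integral extension of domains a prime contracting to $(0)$ must itself be $(0)$; hence $A\to\Z_p$ is injective and the embedding exists.

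A secondary inaccuracy: $V$ does not become geometrically irreducible after base change to $L=\K\cap\overline\Q$, and neither does $V_{\F_p}$ for split $p$; rather it decomposes into $[L:\Q]$ geometrically irreducible components. Your Lang--Weil conclusion survives because at least one component is defined over $\F_p$ once $p$ splits completely in the Galois closure of $L$ and contributes $\sim p^d$ points, but the clean statement is obtained by running the argument through the Stein factorization of the integral model over $\Spec\Z[1/N]$ via $\Spec\mathcal O_L[1/N']$, rather than by the phrase ``geometrically irreducible.''
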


  Let~$X$~be an irreducible quasiprojective variety over~$\C$~and~$\Gamma$~a finitely generated subgroup of~$\Aut (X_\C)$.

  \begin{itemize}
    \item Let~$R$~be an integral domain. We say that~$(X,\Gamma)$~is \emph{defined over }~$R$, if there
      exists an irreducible separated reduced scheme~$X_R$~over~$R$~and an injective homomorphism~$\Gamma \hookrightarrow \Aut_\R (X_\R)$
      such that~$X$~and~$\Gamma$~are obtained by the base change~$X = X_R \times_{\Spec R} \Spec \C$.
    \item Let~$p$~be a prime number. A \emph{model} of~$(X,\Gamma)$~over~$\Z_p$~is the data of
      \begin{enumerate}[label=(\roman*)]
        \item A ring~$R \subset \C$~over which~$(X, \Gamma)$~is defined and an embedding~$R \hookrightarrow \Z_p$.
        \item An irreducible variety~$\Chi$~over~$\Z_p$~and an injective homomorphism~$\rho: \Gamma \hookrightarrow \Aut_{\Z_p}
          (\Chi)$~such that \[ \Chi \simeq X_R \times_{\Spec R} \Spec \Z_p. \] is the base change of~$X_R$~and for all~$f
          \in \Gamma$, ~$\rho (f)$~is the base change of~$f$.
      \end{enumerate}
    \item A \emph{good model} over~$\Z_p$~of~$(X,\Gamma)$~is the
      data of a model of~$(X,\Gamma)$~with the additional condition that the special fiber~$\Chi_{\F_p} = \Chi
      \times_{\Spec \Z_p} \Spec \F_p$~is geometrically reduced and irreducible and of dimension \[
      \dim_{\F_p} (\Chi_{\F_p}) = \dim_{\Q_p} (\Chi \times_{\Spec R} \Spec \Q_p). \]
  \end{itemize}

  \begin{prop}[Proposition 4.4 of \cite{bell2010dynamical}, Proposition 3.2 of \cite{cantat2014algebraic}]\label{FromCtoZp}
    Let~$X$~be an irreducible complex quasi-projective variety,~$\alpha \in X(\C)$~and~$\Gamma$~be a finitely generated
    subgroup of~$\Aut_\C (X)$. Then, there exists an infinite number of primes~$p \geq 3$~such that~$(X,\Gamma)$~has a
    good model~$\Chi$~over~$\Z_p$~and such that~$\alpha$~extends to a section~$\alpha: \Spec \Z_p \rightarrow \Chi$.
  \end{prop}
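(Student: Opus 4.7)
The plan is to combine a standard spreading-out argument with Lech's embedding theorem.

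First I would choose a sufficiently large but finitely generated ring. Pick generators $f_1, \ldots, f_s$ of $\Gamma$. Since everything in sight involves only finitely many complex numbers (defining equations for $X$, polynomial expressions for the $f_i$ and their inverses, and the coordinates of $\alpha$), there is a finitely generated $\Z$-subalgebra $R_0 \subset \C$ and a separated reduced irreducible scheme $X_{R_0} \to \Spec R_0$ whose base change to $\C$ is $X$, together with automorphisms $\tilde f_i \in \Aut_{R_0}(X_{R_0})$ base-changing to $f_i$ and a section $\tilde \alpha : \Spec R_0 \to X_{R_0}$ base-changing to $\alpha$. Enlarging $R_0$ to include inverses of the $\tilde f_i$ if needed, we may assume $\{\tilde f_1, \ldots, \tilde f_s\}$ generates a subgroup of $\Aut_{R_0}(X_{R_0})$ mapping injectively onto $\Gamma$ (injectivity is automatic after base change to $\C$).

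Next I would arrange the good-model conditions over a dense open of $\Spec R_0$. The goal is to find $R$, obtained from $R_0$ by inverting finitely many nonzero elements, such that the structural morphism $X_R \to \Spec R$ has geometrically integral fibers of constant dimension $d = \dim X$. This is where the main technical input lies: by generic flatness (\cite{eisenbud1995commutative}, Theorem 14.4), $X_{R_0} \to \Spec R_0$ is flat over a dense open of $\Spec R_0$; by the openness of the geometrically integral locus and the constancy of fiber dimension on a flat family (EGA IV), one can further shrink $\Spec R_0$ to a nonempty open $\Spec R$ where every geometric fiber is reduced, irreducible, and of dimension $d$. The hard part (though standard) is keeping track that all these openness results simultaneously cut out a nonempty open, and that this open remains nonempty after adjoining the (finitely many) elements needed to make $\tilde f_i^{\pm 1}$ regular and $\tilde \alpha$ a section.

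Now I would apply Lech's theorem. Let $K = \Frac(R)$, which is finitely generated over $\Q$, and let $S \subset K$ be the finite set consisting of a chosen generating set of $R$ together with the inverses of those elements that were inverted in passing from $R_0$ to $R$. By Theorem \ref{theoremLechPlongementPadique}, there exist infinitely many primes $p$ with an embedding $\iota: K \hookrightarrow \Q_p$ mapping $S$ into $\Z_p$. Discarding $p=2$ leaves infinitely many $p \geq 3$. Since $\iota(S) \subset \Z_p^\times$ (the inverses of the inverted elements landing in $\Z_p$ forces the elements themselves into $\Z_p^\times$), the induced ring map $R \to \Z_p$ is well-defined and the composite $\Spec \Z_p \to \Spec R$ has image entirely contained in the good locus: indeed both the generic point (image of $\Spec \Q_p$) and the closed point (image of $\Spec \F_p$) factor through $\Spec R$.

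Finally I would base change. Set $\Chi := X_R \times_{\Spec R} \Spec \Z_p$, which is irreducible and separated, and define $\rho: \Gamma \to \Aut_{\Z_p}(\Chi)$ by base-changing each $\tilde f_i$; injectivity of $\rho$ follows since $\rho$ composed with the further base change to $\C$ recovers the inclusion $\Gamma \hookrightarrow \Aut_\C(X)$. The special fiber $\Chi_{\F_p} = X_R \times_{\Spec R} \Spec \F_p$ is geometrically integral of dimension $d$ by construction of the open good locus, matching $\dim_{\Q_p}(\Chi \times_{\Spec \Z_p} \Spec \Q_p) = d$. The base change of $\tilde \alpha$ along $\Spec \Z_p \to \Spec R$ yields the desired section $\alpha: \Spec \Z_p \to \Chi$, completing the construction.
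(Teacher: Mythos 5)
The paper does not give its own proof of this proposition; it cites it directly to Bell--Ghioca--Tucker (Proposition 4.4 of \cite{bell2010dynamical}) and Cantat--Xie (Proposition 3.2 of \cite{cantat2014algebraic}). Your proof correctly reconstructs the standard spreading-out-plus-Lech argument used in those references, and the details check out: spreading out to a finitely generated domain $R_0\subset\C$, using generic flatness together with constructibility of the locus of geometrically integral fibers (plus the fact that the generic fiber, being $X$ after base change to $\C$, is geometrically integral) to shrink to a nonempty open $\Spec R$ with geometrically integral fibers of constant dimension $d$, and then invoking Lech's theorem on a finite generating set of $R$ together with the relevant inverses so that $\Spec\Z_p\to\Spec R_0$ factors through $\Spec R$ at both the generic and closed points. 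One small phrasing correction: the locus in the base where fibers are geometrically integral is in general only constructible for quasi-projective (non-proper) morphisms, not open; but since it contains the generic point and $\Spec R_0$ is irreducible, it automatically contains a dense open, which is all your argument uses, so nothing breaks.
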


  \begin{ex}
    For simplicity, suppose~$X$~is the affine space~$~\A^d_\C$~with its standard coordinates~$x_1,\cdots,
    x_d$~and~$\Gamma \subset \Aut(\A^d_\C)$~is a finitely generated group of polynomial automorphisms. This is already
    an interesting example. Let~$S$~be a finite symmetrical~$(\inv S = S)$~set of generators of~$\Gamma$. Let~$R$~be the
    ring generated by all the coefficients of the elements of~$S$~and the coordinates of~$\alpha$. Then,~$(X,
    \Gamma)$~is defined over~$R$. Plus, by Theorem \ref{theoremLechPlongementPadique} there exists a prime~$p$~and an
    embedding~$\iota: R \hookrightarrow \Z_p$. Using this embedding, the base change~$\Chi = \A^d_{\Z_p}$~and~$\rho:
    \Gamma \hookrightarrow \Aut (\A^d_{\Z_p})$~show that~$(\A^d, \Gamma)$~is a good model over~$\Z_p$~and~$\alpha$~extends to a
   ~$\Z_p$-point of~$\Chi$.
  \end{ex}

  \subsection{From algebraic automorphisms to analytic diffeomorphisms over~$\Z_p$}
  In this section, we consider a scheme
 ~$\Chi$~of dimension~$d$~over~$\Z_p$, where~$p\geq 3$~is a prime number, such that
  \begin{itemize}
    \item~$\Chi$~is a quasi-projective variety over~$\Z_p$, and its generic fiber is geometrically
      irreducible over ~$\Q_p$.
    \item~$\overline \Chi = \Chi \times_{\Spec \Z_p} \Spec \F_p$~is the special fiber of~$\Chi$~and is geometrically
      irreducible over~$\F_p$.
    \item~$f: \Chi \rightarrow \Chi$~is an automorphism of~$\Z_p$-schemes.
    \item~$\overline f : \overline \Chi \rightarrow \overline \Chi$~is the restriction of~$\Chi$~to the special fiber.
    \item~$r: \Chi (\Z_p) \rightarrow \overline{\Chi} (\F_p)$~is the reduction map.
    \item~$x$~is a smooth~$\F_p$-point and there exists~$\alpha \in \Chi(\Z_p)$~such that~$r(\alpha) = x$.
  \end{itemize}

  For the two next propositions, we refer to \cite{bell2010dynamical}. They will enable us to go from
  algebraic automorphisms to analytic diffeomorphisms.

  \begin{prop}\label{PropExistenceOfIota}
    Let~$\Chi$~be a quasi-projective scheme over~$\Z_p$. There exists a function~$\iota: \Z_p^d \rightarrow
    \Chi(\Z_p)$~which induces an analytic bijection between~$\Z_p^d$~and the open subset of~$\Chi (\Z_p)$~consisting of
    the points~$\beta$~such that~$r(\beta) = x$.
  \end{prop}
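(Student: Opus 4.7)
The plan is to build $\iota$ as the composition of the scaling map $\Z_p^d \to (p\Z_p)^d$ with the inverse of a system of étale coordinates around $\alpha$, extracted from a regular system of parameters on the special fiber at $x$; a $p$-adic Hensel-type inverse function theorem then converts this into the required analytic bijection onto the tube $r^{-1}(x)$.

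First, I would replace $\Chi$ by an affine open neighborhood of $\alpha$, so that $\Chi = \Spec A$ for some finitely generated $\Z_p$-algebra $A$. Since $x$ is a smooth $\F_p$-point of the $d$-dimensional special fiber $\overline{\Chi}$, the local ring $\mathcal O_{\overline{\Chi},x}$ is regular of dimension $d$; pick $\overline{t}_1,\dots,\overline{t}_d \in \mathcal O_{\overline{\Chi},x}$ a regular system of parameters vanishing at $x$, lift them to $t_1,\dots,t_d \in A$, and define $\varphi : \Chi \to \A^d_{\Z_p}$ by $\varphi = (t_1 - t_1(\alpha),\dots,t_d - t_d(\alpha))$, so that $\varphi(\alpha) = 0$ and the reduction $\overline{\varphi}$ sends $x$ to $0$.

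Next, I would argue that $\varphi$ is étale at $\alpha$. The $\overline{t}_i$ induce an isomorphism of cotangent spaces at $x$ between two smooth $d$-dimensional $\F_p$-schemes, so $\overline{\varphi}$ is étale at $x$. Because $\mathcal O_{\overline{\Chi},x}$ is regular and $p$ is a nonzerodivisor in $\mathcal O_{\Chi,\alpha}$, the local ring $\mathcal O_{\Chi,\alpha}$ is regular as well, hence flat over $\Z_p$; the fiberwise criterion for étaleness then upgrades étaleness of $\overline{\varphi}$ at $x$ to étaleness of $\varphi$ at $\alpha$, and after shrinking $\Chi$ around $\alpha$ we may assume $\varphi$ is étale on all of $\Chi$.

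Finally, I would invoke the $p$-adic inverse function theorem (Hensel's lemma for étale morphisms): for every $\beta \in (p\Z_p)^d$ there exists a unique $\gamma \in \Chi(\Z_p)$ with $\varphi(\gamma) = \beta$ and $r(\gamma) = x$, and the map $\beta \mapsto \gamma$ is analytic with analytic inverse $\varphi$, obtained as a convergent power series. Composing with $\Z_p^d \to (p\Z_p)^d$, $y \mapsto py$, produces the desired $\iota$, and openness of $r^{-1}(x)$ in $\Chi(\Z_p)$ follows from continuity of reduction. The main obstacle is the second step: one must carefully pass from an étale reduction to étaleness of $\varphi$ itself over $\Z_p$, which is where regularity of $\mathcal O_{\Chi,\alpha}$ and the fiberwise criterion do essential work. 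Once this is secured, the analytic bijection is formal from the inverse function theorem in the Tate-analytic category.
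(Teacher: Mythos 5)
The paper itself does not prove this proposition: it refers the reader to \cite{bell2010dynamical} and only verifies the statement in the model case $\Chi = \A^d_{\Z_p}$, where one simply takes $\iota(m) = z + pm$ for any lift $z\in\Z_p^d$ of $x$. Your argument treats the general quasi-projective case, and by the standard method (essentially the one the cited reference follows): lift a regular system of parameters at the smooth $\F_p$-point $x$ to an \'etale map $\varphi: \Chi \to \A^d_{\Z_p}$ with $\varphi(\alpha)=0$, then apply Hensel's lemma (the $p$-adic inverse function theorem for \'etale morphisms) to see that $\varphi$ restricts to an analytic bijection from the residue tube $r^{-1}(x)$ onto $(p\Z_p)^d$, which you rescale to $\Z_p^d$. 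The plan is correct, and this is genuinely more informative than what the paper offers, since it supplies the omitted general proof. Two expository points deserve tightening. First, you invoke regularity of $\mathcal O_{\Chi,\alpha}$ to deduce flatness over $\Z_p$, but over the DVR $\Z_p$ flatness is equivalent to torsion-freeness, so $p$ being a nonzerodivisor already suffices; that $p$ is a nonzerodivisor in $\mathcal O_{\Chi,\alpha}$ should itself be justified, and it follows from the standing assumption that $\Chi$ is an integral scheme with nonempty generic fiber, so that $p$ is a nonzero element of a domain. Second, when you replace $\Chi$ by an affine open $U$ around $\alpha$, you should observe that no points of the tube are lost: any $\beta\in\Chi(\Z_p)$ with $r(\beta)=x$ sends the generic point of $\Spec\Z_p$ to a generization of $x$, hence into the open set $U$, so such $\beta$ automatically factor through $U$. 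With those remarks added, the proof is complete.
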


  \begin{prop}\label{PropConjugaisonDiffeoAnalytique}
    Suppose that~$\bar f (x) = x$. Let~$\iota: \Z_p^d \rightarrow
    \Chi (\Z_p)$~be the function defined
    in Proposition \ref{PropExistenceOfIota}. Then there exist analytic functions~$F_1,\cdots,F_d \in \Z_p \langle T_1,\cdots,
    T_d \rangle$~such that \begin{enumerate}[label = (\roman*)] \item One has \[ \inv \iota \circ f \circ \iota =
        (F_1,\cdots,F_d) =: \mathcal F \in \Z_p \langle T_1,\cdots,T_d \rangle^d. \]
      \item if~$\bar{\mathcal F}$~is the reduction mod~$p$~of~$\mathcal F$, then~$\bar{\mathcal F} = \mathcal F_0 + \mathcal
        F_1$~with~$\mathcal F_0 \in (\Z/p\Z)^d$~and~$\mathcal F_1 \in
        \GL_d(\Z / p \Z)$.
    \end{enumerate}
    Furthermore~$\mathcal F$~is a Tate-analytic diffeomorphism because~$f$~is an
    automorphism.
  \end{prop}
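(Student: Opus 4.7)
The plan is to exploit the specific form of the chart $\iota$ coming from Proposition~\ref{PropExistenceOfIota}. Since $x$ is a smooth $\F_p$-point and $\alpha \in \Chi(\Z_p)$ reduces to $x$, one may fix local étale coordinates $u_1,\dots,u_d$ of $\Chi$ near $\alpha$ with $u_i(\alpha)=0$; the map $\iota$ is then characterized by $u_i(\iota(T))=pT_i$, and parametrizes exactly the clopen ball $r^{-1}(x)\subset \Chi(\Z_p)$ (a $\Z_p$-point reduces to $x$ iff each $u_i$-coordinate lies in $p\Z_p$). The hypothesis $\bar f(x)=x$, applied also to $f^{-1}$, ensures that $f$ sends $r^{-1}(x)$ to itself, so $\mathcal F := \inv\iota\circ f\circ \iota$ is a well-defined self-map of $\Z_p^d$.

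Next I would write each pullback $u_i \circ f$ as a power series $g_i(u)=\sum_\beta a_{i,\beta} u^\beta$ with coefficients in $\Z_p$, converging on the closed unit polydisk; this is possible because $f$ is an automorphism of the integral scheme $\Chi$ and $\alpha$ is a smooth $\Z_p$-point, so the completed local ring is a formal power series ring over $\Z_p$. The composition formula then reads
\[
\mathcal F_i(T) \;=\; \frac{1}{p}\, g_i(pT) \;=\; \frac{g_i(0)}{p} \;+\; \sum_{|\beta|\geq 1} a_{i,\beta}\, p^{|\beta|-1}\, T^{\beta}.
\]
The constant $g_i(0)/p$ lies in $\Z_p$ because $\bar f(x)=x$ forces $g_i(0)\in p\Z_p$, and for $|\beta|\geq 1$ the coefficient $a_{i,\beta}p^{|\beta|-1}$ lies in $\Z_p$ and tends to $0$ as $|\beta|\to\infty$. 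Hence $\mathcal F\in \Z_p\langle T_1,\dots,T_d\rangle^d$, giving~(i).

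For~(ii), the same formula makes the mod $p$ reduction transparent: the terms with $|\beta|\geq 2$ vanish, so $\bar{\mathcal F}=\mathcal F_0+\mathcal F_1$ with $\mathcal F_0=(\overline{g_i(0)/p})_i\in \F_p^d$ and $\mathcal F_1=(\bar a_{i,e_j})_{i,j}$. The matrix $\mathcal F_1$ is the expression of the differential $D_x \bar f$ in the coordinates $\bar u_i$, so it is invertible because $\bar f$ is an automorphism at the smooth point $x$. For the final assertion, applying the very same construction to $f^{-1}$ yields a Tate-analytic $\mathcal G\in \Z_p\langle T\rangle^d$ satisfying $\mathcal F\circ \mathcal G=\mathcal G\circ \mathcal F = \id$, so $\mathcal F$ belongs to $\Diff^{an}(\Z_p^d)$. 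The only delicate point, in my view, is to set up the étale coordinates at $\alpha$ so that the pullbacks $u_i\circ f$ genuinely have $\Z_p$-coefficients—not merely $\Q_p$-coefficients with bounded denominators; this is exactly where the integrality provided by the good model enters, together with Hensel's lemma at the smooth point $x$.
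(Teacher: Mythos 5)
Your argument is correct and, in spirit, the same rescaling computation the paper carries out: write the automorphism in local coordinates centered at the fixed residue disk, substitute $u = pT$, and observe that the resulting coefficients $a_{i,\beta}\, p^{|\beta|-1}$ lie in $\Z_p$ and tend to $0$, so that $\mathcal F \in \Z_p\langle T\rangle^d$ and $\bar{\mathcal F}$ is affine with linear part $D_x\bar f \in \GL_d(\F_p)$. The difference is one of generality: the paper only writes this out for $\Chi = \A^d_{\Z_p}$ (where the $u_i$ are the standard affine coordinates and $\iota(T) = z + pT$) and refers to Bell--Ghioca--Tucker for the general quasi-projective case, whereas you sketch the general case directly by choosing étale coordinates $u_1,\dots,u_d$ at $\alpha$ and using that the completed local ring along the section $\alpha$ is $\Z_p[[u_1,\dots,u_d]]$ at a smooth point — which is precisely what the cited reference does.

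One small wording slip worth flagging: the series $g_i = u_i\circ f \in \Z_p[[u]]$ need not converge on the closed unit polydisk (think of $\sum u^n$); what you actually use, and all you need, is that it converges on $(p\Z_p)^d$, which is automatic for any power series with $\Z_p$ coefficients. The substitution $u = pT$ then turns it into a genuine element of $\Z_p\langle T\rangle$, exactly as in your displayed computation, so the rest of the argument is unaffected. You also correctly note where the hypotheses enter: smoothness of $x$ gives the invertibility of $\mathcal F_1 = D_x\bar f$, the good model gives integrality of the coefficients, and $\bar f(x)=x$ gives $g_i(0)\in p\Z_p$; and the inverse $\mathcal G$ obtained from $f^{-1}$ shows $\mathcal F \in \Diff^{an}(\Z_p^d)$.
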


  \begin{ex}
    Propositions \ref{PropExistenceOfIota} and \ref{PropConjugaisonDiffeoAnalytique} are proven in
    \cite{bell2010dynamical}. We only do the proof in the case~$\Chi = \A^d_{\Z_p}$. Take standard coordinates~$\x = x_1,
    \cdots, x_d$~over~$\Chi$. Then,~$\Chi = \Spec \Z_p[\x]$~and~$\overline \Chi = \Spec \F_p[\x]$. The reduction map~$r:
    \Chi(\Z_p) = \Z_p^d \rightarrow \overline \Chi(\F_p) = \F_p^d$~is the reduction mod~$p$~coordinates by coordinates.

    Take~$x \in \F_p^d$~and~$z \in \Z_p^d$~such that~$r(z) = x$, then the open subset of~$\Chi(\Z_p)$~of elements~$\beta$
    such that~$r(\beta) =x$~is the ball of center~$z$~and radius~$1 / p$. The analytic bijection~$\iota$~is given by
    ~$\iota: m \in \Z_p^d \mapsto z + p \cdot m \in \Chi(\Z_p) = \Z_p^d$. This proves Proposition \ref{PropExistenceOfIota}.

    Now, take a polynomial automorphism~$f$, the map~$\overline f$~is the polynomial automorphism over~$\F_p^d$~obtained
    when taking the coefficients of~$f \mod p$. Take a point~$x \in \F_p^d$~such that~$\bar f (x) = x$, up to a
    conjugation by a translation (which does not change the result), we can suppose that~$x = 0 \in \F_p^d$. This means
    that~$f$~preserves the ball of center
    0 and radius~$1/p$~in~$\Z_p^d$. Writing~$f$~in coordinates, we have
    \[
      f(\x) = p a_0 + A_1 (\x) + A_2(\x) + \cdots
    \]
    where~$a_0 \in \Z_p^d$~and~$A_i$~is the homogeneous part of degree~$i$~of~$f$. Then,
    \[ \inv \iota \circ f \circ \iota (\x) = \frac{1}{p} f(p \x) = a_0 + A_1(\x) + \sum_{k \geq 2} p^{k-1} A_k (\x). \]
    This is indeed an element of~$\Z_p \langle \x \rangle^d$~and~$\overline{\frac{1}{p}f(p \x)}$~is an invertible affine
    transformation of~$\F_p^d$, this proves Proposition \ref{PropConjugaisonDiffeoAnalytique}.
  \end{ex}

  \begin{prop}\label{FromAlgAutoToDIffAnal}[Proposition 3.3 of \cite{cantat2014algebraic}]
    Let~$\Gamma$~be a finitely generated subgroup of~$\Aut_{\Z_p} (\Chi)$. There
    exists a finite index subgroup~$\Gamma_0 \subset \Gamma$~and an open subset~$\mathcal U \subset \Chi (\Z_p)$
    analytically diffeomorphic to~$\Z_p^d$~such that~$\mathcal U$~is stable by the action of~$\Gamma_0$~on~$\Chi$~and this
    action over~$\mathcal U$~is conjugated to the action of a subgroup of~$\Diff^{an}_1 (\mathcal U)$.
  \end{prop}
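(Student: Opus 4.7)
The strategy is to reduce modulo $p$, use the finiteness of $\overline \Chi(\F_p)$ to find a common fixed point of a finite index subgroup of $\Gamma$, convert the action near that point into Tate-analytic diffeomorphisms via Proposition \ref{PropExistenceOfIota}, and finally kill the residual mod $p$ affine part of the dynamics by passing to a further finite index subgroup.

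First, the good model assumption provides a smooth $\F_p$-rational point $x$ of $\overline \Chi$ together with a lift $\alpha \in \Chi(\Z_p)$ such that $r(\alpha) = x$ (by formal smoothness, or equivalently by Hensel's lemma; concretely one starts from a smooth complex point in $X(\C)$, reduces it through the good model, and enlarges $p$ through Proposition \ref{FromCtoZp} if needed so that Lang-Weil guarantees smooth $\F_p$-rational points). The reduction map $r : \Chi(\Z_p) \to \overline \Chi(\F_p)$ is $\Gamma$-equivariant and $\overline \Chi(\F_p)$ is finite, hence the stabilizer $\Gamma_1 \leq \Gamma$ of $x$ has finite index.

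Next, I apply Proposition \ref{PropExistenceOfIota} to obtain an analytic bijection $\iota : \Z_p^d \to \mathcal U$ onto the open subset $\mathcal U := r^{-1}(x) \subset \Chi(\Z_p)$; the subgroup $\Gamma_1$ preserves $\mathcal U$ because it fixes $x$. For every $g \in \Gamma_1$, Proposition \ref{PropConjugaisonDiffeoAnalytique} produces the Tate-analytic diffeomorphism $\mathcal G_g := \iota^{-1} \circ g \circ \iota \in \Z_p \langle \x \rangle^d$, whose reduction modulo $p$ is an affine map $\mathcal F_0^g + \mathcal F_1^g \cdot \x$ with $(\mathcal F_0^g, \mathcal F_1^g) \in \F_p^d \rtimes \GL_d(\F_p) = \Aff_d(\F_p)$. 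The assignment $\rho : g \mapsto (\mathcal F_0^g, \mathcal F_1^g)$ is a group homomorphism $\Gamma_1 \to \Aff_d(\F_p)$ into a finite group; set $\Gamma_0 := \ker \rho$, which is a finite index subgroup of $\Gamma_1$, hence of $\Gamma$. By construction, every $g \in \Gamma_0$ satisfies $\mathcal G_g \equiv \id \pmod p$, so the action of $\Gamma_0$ on $\mathcal U$ is conjugated by $\iota$ to a subgroup of $\Diff^{an}_1(\Z_p^d)$, which is what we wanted.

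The delicate step is the production of the smooth $\F_p$-point together with its $\Z_p$-lift: once this has been arranged by the good model setup, the rest amounts to bookkeeping, the two finite index steps---stabilizing $x$, then killing the affine reduction---being direct consequences of the finiteness of $\overline \Chi(\F_p)$ and of $\Aff_d(\F_p)$ combined with the preparatory Propositions \ref{PropExistenceOfIota} and \ref{PropConjugaisonDiffeoAnalytique}.
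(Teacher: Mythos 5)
Your proof is correct and follows essentially the same route as the paper: use finiteness of $\overline\Chi(\F_p)$ to pass to a finite-index subgroup fixing $x$, conjugate by $\iota$ via Propositions \ref{PropExistenceOfIota} and \ref{PropConjugaisonDiffeoAnalytique}, and take the kernel of the reduction map into the finite affine group $\Aff_d(\F_p)$ to land in $\Diff^{an}_1$. The only cosmetic difference is that the paper takes $\Gamma_1$ to act trivially on all of $\overline\Chi(\F_p)$ rather than merely stabilize $x$, and the existence of the smooth $\F_p$-point $x$ with a $\Z_p$-lift $\alpha$ is already part of the standing hypotheses of the section, so your digression into Lang--Weil and Hensel is not needed.
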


  \begin{proof}
    Since~$r(\alpha) =x \in \overline \Chi (\F_p)$, the set~$\overline \Chi(\F_p)$~is not empty and since~$\overline \Chi$~has finitely
    many~$\F_p$-points, there exists a finite index subgroup~$\Gamma_1
    \subset \Gamma$~that acts trivially on~$\overline \Chi (\F_p)$. The point~$x$~is fixed by~$\Gamma_1$,
    let~$\iota$~be as in Proposition \ref{PropExistenceOfIota} and~$\mathcal U$~the open subset of~$\Chi(\Z_p)$
    consisting of the points~$\beta$~such that~$r(\beta) = x$.
    Therefore,~$\Gamma_1$~preserves~$\mathcal U$~and by applying Proposition
    \ref{PropConjugaisonDiffeoAnalytique} to the elements of~$\Gamma_1$, we get that conjugation by~$\iota$~induces a
    group homomorphism~$\Gamma_1 \hookrightarrow \Diff^{an}(\Z_p^d)$. Composing this embedding with the homomorphism of
    reduction$\mod p$~induces a group homomorphism from~$\Gamma_1$~to the finite group of affine transformations
    of~$(\Z / p \Z)^d$. Denote by~$\Gamma_0$~the kernel of this homomorphism and the theorem is proven.
  \end{proof}

  \subsection{Proof of Theorem \ref{BoundNilpotentGroups}}

  Take~$H$~a finitely generated nilpotent group acting by algebraic automorphisms on a quasi-projective
  variety~$X$~over a field of characteristic zero.

  We are first going to show that we can suppose~$X$~to be irreducible in order to work on a~$\Z_p$-scheme:~$X$~has a
  finite number of irreducible components and~$H$~permutes them. So there exists a finite index subgroup~$H' \subset H$
  that stabilizes every irreducible component~$X_i$~of~$X$. Call~$H_i$~the restriction of~$H'$~to~$X_i$, then~$H' = \prod
  H_i$~and~$\vdl (H') = \min \vdl (H_i)$. We replace~$X$~by one of its irreducible component of maximal
  dimension and~$H$~by~$H'$~restricted to this component,~$H'$~is also finitely generated by Proposition
  \ref{PropSubgroupIsFinitelyGenerated}.

  Let~$\alpha \in X(\C)$,~$X$~is then an irreducible complex quasi-projective variety of dimension~$d$, by proposition
  \ref{FromCtoZp}, there exists a prime number~$p \geq 3$~such that~$(X,H)$~admits a good model~$\Chi$~over~$\Z_p$~and
  such that~$\alpha$~extends to a~$\Z_p$-point of~$\Chi$. Now,
  by Proposition \ref{FromAlgAutoToDIffAnal}, there exists a finite index subgroup~$H_0 \subset H$~which is isomorphic
  to a subgroup of~$\Diff_1^{an} (\mathcal U)$, for~$\mathcal U$~an open subset of~$\Chi(\Z_p)$~analytically
  diffeomorphic to~$\Z_p^d$. By Proposition \ref{PropSubgroupIsFinitelyGenerated},~$H_0$~is a finitely generated
  nilpotent subgroup of~$\Diff^{an}_1(\Z_p^d)$. Using Theorem \ref{MinorationpAdic2}, we get that the Lie
  algebra~$\h$~associated to~${H_0}$~is nilpotent and~$\dl(\h) \geq \vdl(H_0) \geq \vdl(H)$. Applying Theorem
  \ref{theoremPAdicEpsteinThurston}, we get~$d \geq \vdl(H)$.

  \subsection{Optimality of Theorem \ref{BoundNilpotentGroups}}\label{SubSecOptimality}

  \paragraph{An example from \cite{epstein1979transformation}.--} We will use the construction from
  \cite{epstein1979transformation} to find groups where Theorem \ref{BoundNilpotentGroups} is optimal.

  Let~$n$~be an integer and let~$A$~be the matrix such that~$A(e_i) = e_{i+1}, 1 <i \leq n$~where~$e_i$~is
  the canonical basis. Consider the subgroup of affine transformations~$G = \left\{ x \in \R^n \mapsto \exp(tA) x + b : t \in \R, b \in \R^n \right\}$, we will write~$(t;b)$~for the element~$(x \mapsto \exp (tA) x +
   b)$. This is a real Lie group of dimension~$n+1$~of nilpotency class~$n$~and derived length 2, diffeomorphic to
  ~$\R^{n+1}$. The group law is given by
   \[ (t;b) (s;c) = (t+s; b + e^{tA}c). \]
   Notice that the group law is given by polynomials with rational coefficients in~$s,t$~and the coordinates of~$b$~and
  ~$c$; thus~$G$~is in fact an algebraic group.

   \begin{lemme}\label{LemmeCrochetPolynomial}
     Recall the notation of \ref{SubSubSecNilpotentGroups}. Let~$k < n$~be an integer. The map
     \[ \left( (t_0;b_0), \cdots, (t_k; b_k) \right) \in G^{k+1} = \R^{(n+1)(k+1)} \mapsto \mathrm{Br}_{k+1} \left(
     (t_0; b_0), \cdots, (t_k; b_k) \right) \in G = \R^{n+1} \]
     is a nonconstant polynomial map with rational coefficients from~$\R^{(n+1)(k+1)}$~to~$~\R^{n+1}$.

   \end{lemme}

   \begin{proof}
     The map is polynomial with rational coefficients because the group law is, and this map is
     not constant because~$\nilp (H) = n > k$.
   \end{proof}

   Consider the vector space generated by the translations~$T_{e_i}, 2 \leq i \leq n$.  The Lie
   group~$S$~acts on the variety~$G$~on the left and~$G / S$~is a variety diffeomorphic to~$\R^2$.  The
   diffeomorphisms are given by

  \[ [(t;b)] \in G / S \mapsto (t, b_1) \in \R^2 \]
  and
  \[ (x,y) \in \R^2 \mapsto \left[ (x; y e_1) \right] \in G / S
 \]
 where the brackets mean that we take the orbit under the action of~$S$.

 The group~$G$~acts by right composition on~$G /S$~and this action is faithful. The formulas are given by
 \[ \forall (t;b) \in G, \forall (x,y) \in \R^2 = G / S , \quad (x,y) \cdot (t; b) = \left(x+ t, y +
 \sum_{k=1}^n \frac{t^{k-1}}{(k-1)!} b_k \right). \]
 We see that the action is therefore by polynomial automorphisms. We will write $(t;b)$ on the left even
 though the action is on the right because we view it as a polynomial automorphism of $\A^2_\C$.
 \paragraph{A group where theorem \ref{BoundNilpotentGroups} is optimal.--}

 Now, take~$H$~a finitely generated
 subgroup of~$G$~such that~$\nilp(H) = n$~and~$H$~contains two elements~$(t;b), (s;c)$~such that~$t,s$~and all the
 coordinates of~$b,c$~are algebraically independent over~$\Q$. The group~$H$~satisfies the condition of
 Theorem \ref{BoundNilpotentGroups}, it acts faithfully on the quasiprojective variety~$\A^2_\C$~and we
 have~$\vdl(H) =2$. Indeed, if~$H$~admits an abelian finite index subgroup, then there exists an integer
~$N$~such that~$(t;b)^N$~and~$(s;c)^N$~commute. But this would give a non-trivial polynomial relation over
~$\Q$~between~$s,t$~and the coordinates of~$b,c$~by Lemma \ref{LemmeCrochetPolynomial}, this is absurd. Thus, the bound
 in Theorem \ref{BoundNilpotentGroups} is optimal for~$H$.

 \paragraph{Derived length versus nilpotency class.--} In Theorem \ref{BoundNilpotentGroups} we suppose  that~$H$~is
 nilpotent. One might wonder if the bound can be improved using the virtual nilpotency class, i.e the minimum of~$\nilp
 (H')$~for~$H'$~of finite index in~$H$. We
 show that this is not possible with a similar counterexample as above. Take~$H$~a finitely generated subgroup of~$G$
 such that~$H$~contains~$(t_0; b_0), \cdots, (t_{n-1}; b_{n-1}) \in G^{n}$~such that all the~$t_i$'s and
 the coordinates of the $b_i$'s are algebraically independent over~$\Q$. We show that every finite index subgroup~$H'$~of~$H$~has a nilpotency
 class equal to~$n$. Indeed, there exists an integer~$N$~such that for all~$0 \leq i \leq n-1, h_i := (t_i; b_i)^N \in
 H'$. The coordinates of the~$h_i$'s are still algebraically independent over~$\Q$~because the group law is given by
 polynomials with rational coefficients and by Lemma \ref{LemmeCrochetPolynomial}, the bracket~$[h_0; \cdots;
 h_{n-1}]$~of length~$n$~is not the identity, because that would give a nontrivial polynomial relation between the
 coordinates of the~$h_i$'s.

 \paragraph{Optimality of Theorem \ref{theoremPAdicEpsteinThurston}.--}
  We show that in Theorem \ref{theoremPAdicEpsteinThurston} we can't replace the derived length with the
  nilpotency class and that the theorem is optimal. In fact, the counterexample of
  \cite{epstein1979transformation} can be adapted over~$\Z_p$~as follows. Consider the group~$G$~given by
  \[ G := \left\{ \x \in \Z_p^n \mapsto \exp (p \cdot t A) \x + b : t \in \Z_p, b \in \Z_p^n \right\}. \]
  The group law is now given by polynomials with coefficients in~$\Z_p$~and Lemma \ref{LemmeCrochetPolynomial} still
  holds but the polynomials are with coefficients in~$\Z_p$.

  Then,~$G /S$~is analytically diffeomorphic to~$\Z_p^2$~and we have an embedding of Lie groups~$G \hookrightarrow
  \Diff^{an} (\Z_p^2)$~given by
  \[ \forall (t;b) \in G, \quad (t;b) (x,y) = \left(x + t, y + \sum_{k=1}^n \frac{p^{k-1}t^{k-1}}{(k-1)!} b_k \right). \]
    Let~$\g \subset \Theta (\Z_p^2)$~be the Lie algebra of~$G$,~$\g$~is nilpotent and we show that~$\nilp
    (\g) = n$. Let~$k = \nilp (\g)$, then by Proposition \ref{PropOpenSubgroupDerivedSeries}, there exists
    a small subgroup~$G'$~of~$G$~which is a neighbourhood of $\id$ such that~$\nilp (G') = k$. Therefore~$k \leq n$,
    suppose~$k < n$. By Lemma \ref{LemmeCrochetPolynomial} the map
    \[ (t_0; b_0), \cdots, (t_k; b_k), (x,y) \in \Z_p^{(n+1)(k+1)} \times \Z_p^2 \mapsto \mathrm{Br}_{k+1} ( (t_0;
    b_0), \cdots, (t_k; b_k)) (x,y) \in \Z_p^2 \]
    is polynomial. Let~$P_1 (\mathbf w), P_2(\mathbf w)$~be the first and second coordinate of this map
    where~$\mathbf w$~is a multivariate variable representing all the variables~$t_i, b_i, x,y$. Since,~$\nilp (G) > k$, the
    polynomials~$Q_1(\mathbf w) = P_1 (\mathbf w)- x$, $Q_2 (\mathbf w)= P_2 (\mathbf w)- y$~are not zero. Notice that if
   $(t;b) \in G$, then the Gauss norm of~$(t;b) - \id \in \Z_p \langle x, y \rangle^2$~is bounded by the norm of~$(t;b)
    \in \Z_p^{n+1}$, therefore there exists an integer~$N >0$~such that for all~$(t;b) \in G, (p^N t; p^N b) \in G'$;
    thus \[ Q_1 (p^N \mathbf w) \equiv 0, \quad Q_2 (p^N \mathbf w) \equiv 0 \] and this implies that~$Q_1 = 0, Q_2 =
    0$, this is a contradiction.

    By a similar argument, we can show there are no small abelian subgroups~$G' \subset G$
    neighbourhood of the identity therefore~$\dl(\g) = 2$~by Proposition \ref{PropOpenSubgroupDerivedSeries} and Theorem
    \ref{theoremPAdicEpsteinThurston} is also optimal.

\paragraph*{Acknowledgements.--}
I would like to thank my advisor Serge Cantat for his help. He gave me helpful advice whenever I needed
them. I would also like to thank Junyi Xie for his suggestions. Finally, I would like to thank the reviewer for his/her very
useful observations and detailed advice.

\bibliographystyle{alpha}
\bibliography{ref}

\end{document}